\newcommand{\C}{\mathbb{C}}
\renewcommand{\H}{\mathcal{H}}
\newcommand{\N}{\mathbb{N}}
\newcommand{\R}{\mathbb{R}}
\newcommand{\T}{\mathbb{T}}
\newcommand{\Z}{\mathbb{Z}}
\let\div\relax
\DeclareMathOperator{\div}{div}
\DeclareMathOperator{\grad}{\nabla}
\renewcommand{\epsilon}{\varepsilon}
\renewcommand{\setminus}{\smallsetminus}
\newcommand{\eps}{\epsilon}
\newcommand{\one}{\bm{1}}
\newcommand{\pa}[1]{\left(#1\right)}
\newcommand{\norm}[1]{\left\|#1\right\|}
\newcommand{\brak}[1]{\left\langle#1\right\rangle}
\newcommand{\expt}[2][]{\mathbb{E}_{#1}\left[#2\right]}
\newcommand{\Bt}[1]{{B^{n}_{#1}}}
\newtheorem{theorem}{Theorem}[section]
\newtheorem{definition}[theorem]{Definition}
\newtheorem{hypothesis}[theorem]{Hypothesis}
\newtheorem{corollary}[theorem]{Corollary}
\newtheorem{lemma}[theorem]{Lemma}
\newtheorem{proposition}[theorem]{Proposition}
\theoremstyle{remark}
\newtheorem{remark}[theorem]{Remark}
\numberwithin{equation}{section}
\newcommand{\sumkj}{\sum_{\substack{k\in \Z^{3}_0\\ j\in \{1,2\}}}}
\newcommand{\sumkmeanj}{\sum_{\substack{k\in \Z^{3}_0\\k_3=0,\quad j\in \{1,2\}}}}
\newcommand{\sumkmeanjuno}{\sum_{\substack{k\in \Z^{3}_0\\k_3=0,\quad j=1}}}
\newcommand{\sumkmeanjdue}{\sum_{\substack{k\in \Z^{3}_0\\k_3=0,\quad j=2}}}
\newcommand{\sumkmeancov}{\sum_{\substack{k\in \Z^{3}_0\\ k_3=0}}}
\newcommand{\sumkoscj}{\sum_{\substack{k\in \Z^{3}_0\\k_3\neq0,\quad j\in \{1,2\}}}}
\newcommand{\skj}{\sigma_{k,j}^{n}}
\newcommand{\skl}{\sigma_{k,l}^{n}}
\newcommand{\skjcomp}[1]{\sigma_{k,j}^{n,#1}}
\newcommand{\smkj}{\sigma_{-k,j}^{n}}
\newcommand{\smkjcomp}[1]{\sigma_{-k,j}^{n,#1}}
\newcommand{\sk}[1]{\sigma_{k,#1}^{n}}
\newcommand{\skcomp}[2]{\sigma_{k,#2}^{n,#1}}
\newcommand{\smk}[1]{\sigma_{-k,#1}^{n}}
\newcommand{\smkcomp}[2]{\sigma_{-k,#2}^{n,#1}}
\newcommand{\tkj}{\theta_{k,j}^{n}}
\newcommand{\tkl}{\theta_{k,l}^{n}}
\newcommand{\tmkj}{\theta_{-k,j}^{n}}
\newcommand{\tmkl}{\theta_{-k,l}^{n}}
\newcommand{\tkjcomp}[1]{\theta_{k,#1}^{n}}
\newcommand{\sumH}{\sum_{\substack{k_H\in \Z^2_0\\ n\leq \lvert k_H\rvert \leq 2n}}}
\newcommand{\sgn}{\text{sgn}}
\newenvironment{acknowledgements}{%
  \begin{abstract}
}{%
  \end{abstract}
}
\title[Mean-field Models as Scaling Limits of Stochastic Induction Equations]{Mean-field Magnetohydrodynamics Models as Scaling Limits of Stochastic Induction Equations}
\author[F. Butori]{Federico Butori}
\address{Scuola Normale Superiore, Piazza dei Cavalieri, 7, 56126 Pisa, Italia}
\email{\href{mailto:federico.butori at sns.it}{federico.butori at sns.it}}
\author[E. Luongo]{Eliseo Luongo}
\address{Scuola Normale Superiore, Piazza dei Cavalieri, 7, 56126 Pisa, Italia}
\email{\href{mailto:eliseo.luongo at sns.it}{eliseo.luongo at sns.it}}
\date\today
\keywords{Roberts Flow, Magnetohydrodynamics, Transport Noise, Eddy Viscosity Model, Alpha Effect, Beta Effect}
\subjclass{60H15, 76F25}
\date\today
\begin{document}
\begin{abstract}
We study the asymptotic properties of a stochastic model for the induction equations of the magnetic field in a three dimensional periodic domain. The turbulent velocity field driving the electromotive force on the magnetic field is modeled by a noise white in time. For this model we rigorously take a scaling limit leading to a deterministic model. While in case of isotropic turbulence this produces an additional dissipation in the limit model which influences also the decay rate of the Magnetic field in the stochastic model, the case of turbulence devoloped in a preferential direction allows us to find a dynamo effect.
\end{abstract}

\maketitle

\section{Introduction}\label{sec introduction}
The evolution of the magnetic field induced by the motion of a (non-relativistic) conducting fluid with resistivity $\eta$ (or conductivity $1/\eta$) is governed by the Maxwell equation and Ohm's law. Together, these laws allow to derive the \emph{Induction Equation}
\begin{align*}
\partial_t B_t& = \eta \Delta B_t + \nabla\times (u_t \times B_t), \qquad \div B_t=0.
\end{align*}
The two terms on the right hand side represent respectively the Ohmic dissipation of magnetic energy due to the resistivity of the media, and the magnetic field induced by the electromotive force $u_t\times B_t$, generated by the motion of the fluid.
This equation is then coupled with some equation from fluid dynamics describing the evolution of the velocity field $u_t$. A correct and complete description of the system then shall include also the back reaction of $B_t$ on $u_t$ resulting in a system of coupled equations. This set of equations is at the core of Magneto-Hydrodynamics (MHD) and at that of the kinematic and dynamical aspects of electrically conducting fluids. In this field, an aspect that has remained obscure for many years is the mechanism with which the turbulent motion of astrophysical fluids is able to sustain large magnetic fields for timescales much longer than the dissipative one, think for example to the Sun, Earth or galaxies. Whether the fluid is in the core of a planet or on the surface of a star, it is an almost universal fact that when it is involved in rotating motion, there is the appearance of a magnetic field. The capability of these system to self excite and sustain their magnetic fields was phenomenologically explained through Dynamo Theory. However, the derivation of a precise Dynamo Theory from the induction equation above, has eluded physicists for many years.  
It was not until the '60s, with the work of Steenback, Krause and Rädler, that the situation was substantially transformed, see the seminal paper \cite{steenbeck1966berechnung} or the books \cite{krause2016mean}, \cite{moffatt1978field} and the references therein. The development of their two scales method, and in turn of the mean-field description allowed for a systematic treatment of the dynamo problem for the first time. Their approach was based on splitting the equations and the two fields in large scales $\overline{B_t}, \overline{u_t}$ and small scales components $B'_t, \ u'_t$. When the large scales are produced by some type of averaging operation, taking the average of a product is not the same as taking the product of the averages, thus the equation for the large scales produces a coupling between large and small scales:
\begin{equation*}
    \partial_t \overline {B_t} = \eta \Delta \overline {B_t} + \nabla\times (\overline u_t \times \overline {B_t}) + \nabla\times  ( \overline{u'_t \times B'_t}).
\end{equation*}
The term $\overline{u'_t \times B'_t}$, often referred to as \emph{Turbulent electromotive force} describes the effect of the interaction of the small scales of $u$ and $B$ on the large scale components. In order to close the equation for the large scales, Krause and Rädler proposed as a first approximation to model this interaction by a mean-field effective term of the form
\begin{equation}\label{emf assumption}
    \mathcal{A}\overline{B_t}  - \eta_T \nabla\times \overline{B_t}.
\end{equation}
The derivation of this term and the quantities $\mathcal{A}$ and $\eta_T$ were performed under simplifying assumptions based on an empirical description of the turbulent velocity $u'$. The new two terms were called respectively \emph{alpha} and \emph{beta} effect terms. The mean field equation then takes the form 
\begin{equation*}
    \partial_t \overline {B_t} = (\eta+ \eta_T) \Delta \overline {B_t} + \nabla\times (\overline u_t \times \overline {B_t}) + \nabla\times  ( \mathcal A \overline{B_t}).
\end{equation*}
Under these assumptions, the authors concluded that the term $\eta_T \nabla\times \overline{B_t}$ is responsible for enhancing dissipation, while the term $\mathcal{A}\overline{B_t}$, being of the first order, can provide the mechanism necessary to sustain the magnetic field. In this setting, it is the relative magnitude of these two terms that in the end determines if the large scale magnetic field $\overline{B_t}$ is able to self-sustain or if it dissipates. For this reason, the precise derivation of the quantities $\eta_T$ and $\mathcal A$, is essential for applying this theory of mean-field electrodynamics, see \cite{brandenburg2012current} for a review of numerical studies on the dynamo problem. 
Nowadays it is known that the exact derivation of the \emph{alpha} and \emph{beta} term in general is not an easy matter, and the theory of Krause and R\"adler alone is not always satisfactory, as it relies on an heuristic description of turbulence. We get back on this topic and on other possible mechanisms for dynamo activity at the end of the introduction.\\
In view of the discussion above, the aim of this work is twofold: on one hand we propose a mathematically rigorous and complete procedure to derive the kinematic mean field effective system of Krause and Rädler. We do so with a scaling limit of a stochastic induction equation, in which the turbulent velocity field is replaced by a random Gaussian field delta correlated in time and the product $u_t\times B_t$ is interpreted in Stratonovich form, according to a Wong-Zakai principle (\emph{cf.} \cite{debussche2022second} and the references therein). We refer to \autoref{main Theorem} for the precise convergence result. On the other hand, and as a byproduct of our scheme, we are able to directly link the \emph{alpha} and \emph{beta} terms to the covariance of the noise, thus to the statistics of the (Gaussian) turbulent field, so that we bypass the difficulty of having to deriving them, as they can be prescribed a priori by an appropriate choice of the noise. In this respect, the modeling choice of Gaussian noise can be seen as parallel to the second order correlation approximation (SOCA) of Krause and Rädler, \emph{cf. }\cite[Chapter 4]{krause2016mean}, in which the \emph{alpha} and \emph{beta} terms are deduced dropping correlations of order higher than two of $u'$. Ultimately, the Gaussian assumption, as well as the SOCA one, are too restrictive for the description of fully developed turbulence, and more realistic models should be taken into account, however, in the context of the scheme we adopt, this is still a challenging open problem. In order to stress the flexibility of our approach, we propose several choices of the parameters of noise that account for different turbulent regimes. We focus on flows with positive total Helicity, in the case of isotropic and anisotropic energy spectra (see \autoref{subsect description noise}). In particular, we are able to explicitly identify the role that the Helicity of $u_t$ has in the appearance of the effective \emph{alpha} term in our model and how it is naturally linked to its strength, in a way that is consistent with models of Mean Field MHD (see for instance chapters $3$ and $4$ in \cite{krause2016mean} and the last part of this introduction, for a discussion on the role of helicity). \\

\subsection*{The \emph{It\^o-Stratonovich diffusion limit}}
Our scheme is in spirit the same of our previous work \cite{butori2024stratonovich} and falls into the category of so called It\^o-Stratonovich diffusion limits initiated in \cite{galeati2020convergence}:
the It\^{o}-Stratonovich corrector appearing when we rewrite
the term $\nabla\times\left(  u_t\times B_{t}\right)  $ from the Stratonovich form to the It\^{o} one, corresponds precisely to the mean field term (or sum of terms) discovered in the
literature under the most natural closure formulae. But in our rigorous
derivation there is still the It\^{o} term, which should disappear in the scaling limit
limit. There is no obvious reason why it should
disappear. Indeed, previous results in such direction where limited to consider scalar and 2D models, where stretching of
vector quantities does not appear. In such case it is relatively easy to prove that the It\^{o}
term is negligible in the limit, see for example \cite{flandoli2021scaling}, \cite{flandoli2021quantitative}, \cite{carigi2023dissipation}, \cite{flandoli20232d}, \cite{flandoli2023reduced}, \cite{agresti2024anomalous}. In 3D, the problem was left open for a long time. Previous results in the 3D framework have drastic unphysical simplifactions as neglecting the stretching term \cite{FlLuo21}, or dealing with 2D3C models \cite{flandoli2023boussinesq} and their variants \cite{butori2024stratonovich}. This work, therefore, represents the first example where the techniques of the It\^o-Stratonovich diffusion limit are applied successfully in a completely 3D model. In particular it was conjectured that only a suitable geometry of the involved vector fields could remedy the excess of fluctuations in the stretching term \cite{flandoli2023boussinesq}. Our approach seems to go in the opposite direction, indeed we are able to consider random vector fields $u_t$ either in case of an isotropic structure of the covariance and in case of a preferential direction.\\ 
Although they are similar in spirit, from the physical standpoint one main differences arises between this work and \cite{butori2024stratonovich}: in our previous work, as expected by the well known no-dynamo theorems \cite[Section 3.5]{gilbert2003dynamo}, since our limit equations lose any dependence from one coordinate (becomes 2D3C), the \emph{alpha}-effect in the mean field equation cannot provide any dynamo activity for the slow-varying large scales, thus it might be classified as perturbed version of a no-dynamo theorem, displaying that a `too simple', almost two dimensional, turbulent motion of the fluid cannot sustain a dynamo (see instead \cite{roberts1972dynamo} for the study of the dynamo problem in 3D flows independent by one component and \cite{proctor1994lectures} for some others paradigmatic examples of dynamos). 
Here instead, even when we deduce weaker convergence results, these are enough to provide dynamo activity in the mean field equations arising from the \emph{alpha}-effect if the covariance of the noise is sufficiently anisotropic \emph{cf.} \autoref{dynamo_effect}. On the contrary, if the breaking of the symmetry of the covariance is too weak, the \emph{beta}-term is dominant, providing an enhanced decaying rate \emph{cf.} \autoref{beta_effect}. In particular, the dynamo activity in our model, arising by stochastic perturbations, is a peculiarity of the anisotropic 3D features of the noise, and has no counterpart in 2D models. 
From the mathematical standpoint instead, the regularizing effect of the thin domain is here lost, resulting in either convergence in weaker norms or in more restrictive choices of the noise coefficients. It is however important to keep in mind that, even though the choice of the noise might seems tailored to the needs of the mathematical treatise of the equation, it actually stems from physical considerations, and is tuned as to keep non-trivial and non-vanishing mean helicity of the flow throughout the limiting procedure.
\subsection*{Helicity, Dynamos and Second Order effects}
Let us mention for completeness, that helicity is not the only known mechanism possible for dynamo activity. After the works of Krause, R\"adler and Steenback, the ideas of scale separation and mean field effects were treated sistematically, relying only on homogenization techniques, by several authors, \cite{vishikI}, \cite{lanotte1999large}, \cite{frisch1991}, see for instance the review monograph \cite{zheligovsky2011large}. In this research line, the authors developed an asymptotic theory to study the linear and weakly nonlinear stability problem of MHD models. These techniques allow to derive more general expressions for the \textit{alpha} and \textit{beta} terms. In particular, these results reveal that the direct dependence of the \textit{alpha} coefficient on the mean helicity is not general, and that the problem of determining the alpha effect is instead quite delicate. This fact has been critically assessed for instance in \cite{alpha-effect_chaotic_flows}, \cite{Gilbert01061988}, \cite{Brandenburg2008}, where \textit{alpha}-effect dynamos where constructed for flows with zero mean helicity ($\langle u, \nabla \times u\rangle=0$) and even zero helicity spectrum (in Fourier coefficients, i.e. $(ik \times  \hat u_k )\cdot \hat u_k^*=0 \ \forall k$), yet non-zero helicity density ($u(x) \cdot\nabla \times u(x)$). These constructions rely on some ad hoc geometric features of the flow (see the discussion at the end of \cite{Gilbert01061988}), and show that the identification of a general form for the alpha term is not an easy matter, when the underlying flow is complex. By contrast, our flow is delta correlated in time and becomes delta correlated in space in the limit $n\rightarrow \infty$ (restoring somehow the validity of the assumptions of the models of Krause and R\"adler, \cite[Chapter 4]{molokov2007magnetohydrodynamics}), thus we are able to detect only helicity-type of alpha effects from the limit equation.
Finally, even in situations where the driving flow is parity invariant (that is invariant under simultaneous reflection of coordinates and velocity) and the \textit{alpha} term vanishes, dynamos could still take place. This is because second order effects (like the \textit{beta}-effect) become relevant. The development of negative eddy viscosities, that is the appearance of a `negative' beta term, is now a corroborated phenomenon, which is known to produce dynamos (even if with smaller growth rates than helical dynamos), as investigated in \cite{lanotte1999large}, \cite{frisch1991}. Without claiming any rigorous result concerning negative eddy viscosities let us mention a possible links with our findings. In our model, we do not see such phenomena in the limit mean field equations ($\eta_T$ is strictly positive or zero, depending on the regime), however, we could be witnessing something similar in the stochastic model, that is at finite $n$. Let us explain what we mean. It is known \cite{lanotte1999large} that the \textit{beta} terms is certainly positive if the molecular diffusivity is large enough, as it is expected to dominate turbulent effects. However, if the magnetic Reynolds number is increased, the turbulent effects may become dominant and overwhelm the stabilizing effect of diffusion. By the analysis that we carried in this work, we noticed a similar behavior in the stochastic system. Namely, we are able to close our main energy estimates \autoref{prop:compact_space}, only if we have a large enough diffusivity $\eta$ to absorb some terms coming from the contribution of the stochastic stretching. This is expressed in the body of the paper in the reverse way, that is, the strength of the noise must be bounded by a quantity inversely proportional to $\eta$, as expressed, in the various regimes, by the hypothesis on the noise in \autoref{subsect description noise}. This suggesting that if we do not balance the noise with large enough diffusivity $\eta$, this might destroy the regularizing properties of the mean field $beta$-term, and in turn, that our limitation on the size of the eddy viscosity (which is proportional to the strength of the noise) in order to prove the validity of the mean field model, \emph{cf.} \autoref{dimension turbulent viscosity}, is not only technical but structural, and it is what we need to rule out the onset of negative eddy viscosity effects. We discuss this fact and its connection to our analysis more in details in \autoref{rmk negative eddy}. 

\subsection*{Content of the paper}
The content of the paper is the following. In \autoref{sec functional analysis main results} we introduce rigorously our framework. In particular, we define the structure of the noise in \autoref{subsect description noise}, while we state our main result and its consequences in \autoref{sec main result}. The remainder of the paper is dedicated to proving \autoref{main Theorem}. In \autoref{section a priori estimates} we prove some estimates for $B_t$ uniformly in the scaling limit of our noise, then we show the required convergence in \autoref{sec proof main thm}.

\section{Functional Setting and Main Results}\label{sec functional analysis main results}
In \autoref{notation sec} we introduce the notation we follow along the paper. In \autoref{preliminaries subsec} we introduce some functional analysis tools we employ in order to carry on rigorously our analysis described in \autoref{sec introduction}. We describe the noise we employ in \autoref{subsect description noise} providing some insights in the meaning of the assumptions and comparisons with the existing literature. Lastly \autoref{sec main result} is devoted to present our main results. 
\subsection{Notation}\label{notation sec}
In the following we denote by $\mathbb{T}^3=\mathbb{R}^3/(2\pi\mathbb{Z})^3$ the three dimensional torus and by $\Z^3_0=\Z^3\setminus \{0\}$ (resp. $\Z^2_0=\Z^2\setminus \{0\}$) the three (resp. two) dimensional lattice made by points with integer coordinates. We introduce a partition of $\Z^{3}_0=\Gamma_+\cup \Gamma_-$ such that 
\begin{align*}
    \Gamma_+=\{k\in \Z^{3}_0: k_3>0 \vee (k_3=0 \land k_2>0) \vee (k_3=k_2=0 \land k_1>0)\},\quad \Gamma_{-}=-\Gamma_+.
\end{align*}\\ 
We denote by $\{e_1,e_2,e_3\}$ the canonical basis of $\R^3$. If $v=(v^1,v^2,v^3)^t$ and $w=(w^1,w^2,w^3)^t$ are two vector fields we will denote by $v^H=(v^1,v^2)^t$ and $\mathcal{L}_v w$ for the Lie derivative, i.e. $\mathcal{L}_v w=v\cdot\nabla w -w\cdot\nabla v.$\\
For each $j\geq 0,\ l>1$ we denote by 
\begin{align*}
    \zeta^n_{s,j}&= n^{j-3}\sum_{\substack{k\in \Z^3_0,\\ n\leq \lvert k\rvert \leq 2n}}\frac{1}{\lvert k\rvert^{j}},\quad \zeta_{s,j}=4\pi \begin{cases}
     \log 2 & \text{ if } j=3,\\
        \frac{2^{3-j}-1}{3-j} & \text{ if } j\neq 3    
    \end{cases},\\
    \quad
    \zeta^n_{H,j}&= n^{j-2}\sumH \frac{1}{\lvert k_H\rvert^j},\quad \zeta_{H,j}=\int_{1\leq\lvert x\rvert\leq 2}\frac{dx_1dx_2}{\lvert x\rvert^j}=2\pi \begin{cases}
        \log 2 & \text{ if } j=2,\\
        \frac{2^{2-j}-1}{2-j} & \text{ if } j\neq 2
    \end{cases}.
\end{align*}
In particular, by the approximation properties of Riemann sums, for each $j\geq 0$ it holds
\begin{align*}
 \zeta_{s,j}^n= \zeta_{s,j}+O(n^{-1}),\quad \zeta_{H,j}^n= \zeta_{H,j}+O(n^{-1}).
\end{align*}
Moreover, we introduce the following function of the positive octant 
\begin{align*}
\chi:\R_+\times \R_+\times \R_+ \rightarrow \R,\quad \chi(\alpha,\beta,\gamma)=\begin{cases}
    1\quad &\text{if } \alpha=\beta=\gamma=3;\\
    (\gamma-3)\wedge 1 \quad &\text{if } \alpha=2,\ \beta=4,\ \gamma>3;\\
     (\alpha-2)\wedge(\beta-2)\wedge(\gamma-3)\quad &\text{if } \alpha>2,\ \beta>2,\ \gamma>3;\\
    0 \quad &\text{otherwise}.
\end{cases}    
\end{align*}
We denote by $[M, N]$ the quadratic covariation process,
which is defined for any couple $M,\ N$ of square integrable real semimartingales and we extend it by bilinearity to the analogue complex valued processes.\\
Let $a,b$ be two positive numbers, then we write $a\lesssim b$ if there exists a positive constant $C$ such that $a \leq C b$ and $a\lesssim_\xi b$ when we want to highlight the dependence of the constant $C$ on a parameter $\xi$. If $x\in \C$ we denote by $\overline{x}$ its complex conjugate. If $\mathcal{Y}$ is a second-order tensor we denote by $\norm{\mathcal{Y}}_{HS}$ its Hilbert-Schmidt norm. Lastly, if $x\in \R^2$ we denote by $x^{\perp}=(-x_2,x_1)^t.$
\subsection{Preliminaries}\label{preliminaries subsec}
Let us start setting some classical notation and properties of operators in the periodic setting before describing the main contributions of this work. We refer to monographs \cite{Engel_Nagel, FlaLuoWaseda, Marchioro1994,pazy2012semigroups,temam1995navier,temam2001navier, trie1995fun} for a complete discussion on the topics shortly recalled in this section.
\subsubsection{Function Spaces in the Periodic Setting}\label{function spaces periodic}
 Let $ \left(H^{s,p}(\T^3),\norm{\cdot}_{H^{s,p}}\right),\ s\in\mathbb{R},\ p\in (1,+\infty)$ be the Bessel spaces of periodic functions. In case of $p=2$, we simply write $H^{s}(\T^3)$ in place of $H^{s,2}(\T^3)$ and we denote by $\langle \cdot,\cdot\rangle_{H^s}$ the corresponding scalar products.
In case of $s=0$, we write $L^2(\T^3)$ instead of $H^0(\T^3)$ and we neglect the subscript in the notation for the norm and the inner product. With some abuse of notation, for $s>0$, we denote the duality between $H^{-s}$ and $H^s$ by $\langle\cdot,\cdot\rangle$.\\
We denote by $\Dot{H}^{s,p}(\T^3)$ (resp. $\Dot{H}^{s}(\T^3),\ \Dot{L^2}(\T^3)$) the closed subspace of $H^{s,p}(\T^3)$ (resp. $H^{s}(\T^3),\ L^2(\T^3)$) made by zero mean functions with norm inducted by $H^{s,p}(\T^3)$ (resp. $H^{s}(\T^3),\ L^2(\T^3)$) 
and by $Q$ the orthogonal projection of $L^2(\T^3)$ on $\Dot{L}^2(\T^3)$. The restriction (resp. extension) of $Q$ to $H^s(\T^3)$ (resp. $H^{-s}(\T^3)$) for $s>0$ is the orthogonal projection on $\Dot H^s(\T^3)$ (resp. $\Dot H^{-s}(\T^3)$). A different characterization of the fractional Sobolev spaces can be given in terms of powers of the Laplacian. Denoting by \begin{align*}
    \Delta: \mathsf{D}(\Delta)\subseteq \Dot{L^2}(\T^3)\rightarrow \Dot{L}^2(\T^3),
\end{align*}
where $\mathsf{D}(\Delta)=\Dot{H}^2(\T^3)$, it is well known that ${\Delta}$ is the infinitesimal generator of analytic semigroup of negative type that we denote by $e^{t\Delta}:\Dot L^2(\T^3)\rightarrow \Dot L^2(\T^3)$ and moreover for each $s>$0, $\mathsf{D}((-\Delta)^{s})$ (resp. $\left(\mathsf{D}((-\Delta)^{s})\right)^{\prime}$) can be identified with $\Dot H^{2s}(\T^3)$ (resp. $\Dot H^{-2s}(\T^3)$).\\
Similarly, we introduce the Bessel spaces of vector fields
\begin{align*}
    {H}^{s,p}(\T^3;\mathbb{R}^3)&= \{u=(u^1,u^2, u^3)^t:\ u^1,u^2, u^3\in H^{s,p}(\T^3)\},\\  \langle u, v\rangle_{{H}^s}&=\sum_{j=1}^3\langle u^j,v^j\rangle_{H^s}, \quad \text{for } s\in \mathbb{R}.
\end{align*}
Again, in case of $s=0$ we write ${L}^2(\T^3;\R^3)$ instead of ${H}^0(\T^3;\R^3)$ and we neglect the subscript in the notation for the norm and the scalar product. Similarly we denote by $\Dot H^s(\T^3;\R^3)$ (resp. $\Dot L^2(\T^3;\R^3)$) the closed subspace of $H^s(\T^3;\R^3)$ (resp. $L^2(\T^3;\R^3)$) made by zero mean functions. With some abuse of notation we usually employ the same symbols that we introduced for norms and inner products of scalar functions also for the corresponding vector fields and we continue to denote the orthogonal projection between $H^s(\T^3;\R^3)$ and $\Dot{H}^s(\T^3;\R^3),\ s\in \R$, by $Q$, since it acts component-wise.

Lastly we denote by $\mathbf{H}^{s}$ (resp. $\mathbf{L}^2$) the closed subspace of $\Dot{H}^s(\T^3;\R^3)$ (resp. $\Dot{L}^2(\T^3;\R^3)$) made by zero mean, divergence free vector fields with norm induces by $H^s(\T^3;\R^3)$ (resp. $L^2(\T^3;\R^3)$) and by \begin{align*}
    P:\Dot L^2(\T^3;\R^3)\rightarrow \mathbf{L}^2
\end{align*} 
the Leray projection which, in the periodic setting, is an orthogonal projection even when acting between $\Dot H^s(\T^{3};\R^3)$ and $\mathbf{H}^s.$ If $\{\frac{k}{\lvert k\rvert}, a_{k,1}, a_{k,2}\}$ is an orthonormal system of $ \R^3$ for each $k\in\Z^3_0$ then, it is well-known that the family $\{\frac{1}{(2\pi)^{3/2}}a_{k,j}e^{ik\cdot x}\}_{\substack{k\in \Z^3_0\\
j\in \{1,2\}}}$ is an orthonormal system of $\mathbf{L}^2$ and is orthogonal in $\mathbf{H}^s$ for each $s\in \R$.\\
We conclude this subsection introducing some classical notation when dealing with stochastic processes taking values in separable Hilbert spaces. Let $Z$ be a separable Hilbert space, with associated norm $\| \cdot\|_{Z}$. We denote by $C_{\mathcal{F}}\left(  \left[  0,T\right]  ;Z\right) $ the space of weakly continuous adapted processes $\left(  X_{t}\right)  _{t\in\left[
0,T\right]  }$ with values in $Z$ such that
\[
\mathbb{E} \bigg[ \sup_{t\in\left[  0,T\right]  }\left\Vert X_{t}\right\Vert
_{Z}^{2}\bigg]  <\infty
\]
and by $L_{\mathcal{F}}^{p}\left(  0,T;Z\right),\ p\in [1,\infty),$ the space of progressively
measurable processes $\left(  X_{t}\right)  _{t\in\left[  0,T\right]  }$ with
values in $Z$ such that
\[
    \mathbb{E} \bigg[ \int_{0}^{T}\left\Vert X_{t}\right\Vert _{Z}^{p}dt \bigg]
<\infty.
\]
\subsubsection{Operators in the Periodic Setting}\label{operators_periodic_setting}
In this section we are interested to the properties of differential operators of the form
\begin{align*}
\mu\partial_{11}+\mu\partial_{22}+\nu\partial_{33},\ \mu\geq \nu>0,
\end{align*}
the case $\mu=\nu$ corresponding to the Laplacian. It is well known that for each $\mu\geq \nu>0$, the operator
\begin{align*}
    A_{\mu,\nu}:\Dot{H}^2(\T^3)\subseteq \Dot{L}^2(\T^3)\rightarrow \Dot{L}^2(\T^3),\quad A_{\mu,\nu}f=\mu\partial_{11}f+\mu\partial_{22}f+\nu\partial_{33}f 
\end{align*}
is closed and generates an analytic semigroup of negative type which we denote by $S^{\mu,\nu}(t):\Dot L^2(\T^3)\rightarrow \Dot L^2(\T^3)$. The family $\{\frac{1}{(2\pi)^{3/2}}e^{ik\cdot x}\}_{k\in \Z^{3}_0}$  is a complete orthogonal systems of $\Dot{H}^s(\T^3)$ made by eigengunctions of $A_{\mu,\nu}$ for each $\mu\geq\nu>0$. Moreover it is orthonormal in $\Dot L^2(\T^3)$. This allows us to write the operators $A_{\mu,\nu},\ S^{\mu,\nu}(t)$ in Fourier basis as \begin{align}\label{fourier_description_op}
A_{\mu,\nu}f=-\frac{1}{(2\pi)^3}\sum_{\substack{k\in \Z^3_0}}\left(\mu(k_1^2+k_2^2)+\nu k_3^2\right)\langle f, e^{-ik\cdot x}\rangle e^{ik\cdot x},\ S^{\mu,\nu}(t)=\frac{1}{(2\pi)^3}\sum_{\substack{k\in \Z^3_0}}e^{-\left(\mu(k_1^2+k_2^2)+\nu k_3^2\right)t}\langle g, e^{-ik\cdot x}\rangle e^{ik\cdot x}   
\end{align}
for $f\in \Dot{H}^2(\T^3),\ g\in \Dot{L}^2(\T^3).$
The expression in Fourier basis has as immediate consequences the following facts which we recall here for the convenience of the reader.
\begin{lemma}\label{properties_semigroup_splitting_extension}
For each $s\geq 0$, the extension (resp. restriction) of $S^{\mu,\nu}(t):\Dot{H}^{-s}(\T^3)\rightarrow \Dot{H}^{-s}(\T^3)$ (resp. $S^{\mu,\nu}(t):\Dot{H}^{s}(\T^3)\rightarrow \Dot{H}^{s}(\T^3)$) is an analytic semigroup of negative type. Moreover, for each $\mu_1\geq \nu_1>0,\ \mu_2\geq \nu_2>0$ the operators $S^{\mu_1,\nu_1}(t)$ and $S^{\mu_1,\nu_1}(t)$ commute. In particular
\begin{align*}
    S^{\mu_1,\nu_1}(t)S^{\mu_2,\nu_2}(t)=S^{\mu_2,\nu_2}(t)S^{\mu_1,\nu_1}(t)=S^{\mu_1+\mu_2,\nu_1+\nu_2}(t).
\end{align*}
\end{lemma}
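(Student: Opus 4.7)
The unifying tool for both parts of the statement is the Fourier diagonalization in \eqref{fourier_description_op}: all operators $A_{\mu,\nu}$ for $\mu\geq \nu>0$ are simultaneously diagonalized by the orthogonal system $\{\frac{1}{(2\pi)^{3/2}}e^{ik\cdot x}\}_{k\in\Z^3_0}$, with eigenvalues $\lambda_k^{\mu,\nu}:=\mu(k_1^2+k_2^2)+\nu k_3^2$. The $\dot H^s$ norm of $f=\frac{1}{(2\pi)^{3/2}}\sum_k \hat f_k\, e^{ik\cdot x}$ is equivalent to $\big(\sum_k |k|^{2s}|\hat f_k|^2\big)^{1/2}$ for every $s\in\R$, and the two-parameter pencil of multipliers $e^{-\lambda_k^{\mu,\nu} t}$ commutes termwise. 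This reduces everything to scalar estimates on the exponential weights, which is where I would carry out the whole proof.

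For the first assertion, given $s\geq 0$ I would first write, for $f\in \dot H^s$ (and analogously for $f\in\dot H^{-s}$ upon replacing $s$ by $-s$),
\begin{equation*}
\|S^{\mu,\nu}(t)f\|_{\dot H^s}^2 \;\simeq\; \sum_{k\in\Z^3_0} |k|^{2s}\,e^{-2\lambda_k^{\mu,\nu}t}\,|\hat f_k|^2,
\end{equation*}
and then use $\lambda_k^{\mu,\nu}\geq \nu|k|^2\geq \nu$ for $k\in\Z^3_0$ to deduce the uniform bound $\|S^{\mu,\nu}(t)f\|_{\dot H^s}\leq e^{-\nu t}\|f\|_{\dot H^s}$, which gives negative type. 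Strong continuity at $t=0^+$ follows from dominated convergence on the Fourier side. For analyticity I would extend the parameter $t$ to the sector $\Sigma_\theta=\{z\in\C\setminus\{0\}: |\arg z|<\theta\}$ for any $\theta<\pi/2$: since $|e^{-\lambda_k^{\mu,\nu} z}|=e^{-\lambda_k^{\mu,\nu}\,\re z}$ and $\re z\geq (\cos\theta)|z|$, the same Fourier series defines a holomorphic family $S^{\mu,\nu}(z)$ on $\Sigma_\theta$ which is uniformly bounded on every subsector $\Sigma_{\theta'}$, $\theta'<\theta$; this is the standard characterization of analytic semigroups (e.g.\ Engel--Nagel). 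The same calculation works verbatim on $\dot H^{-s}$ with the weight $|k|^{-2s}$, showing that the extension/restriction are well defined and retain the properties.

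For the second assertion I would simply note that on each Fourier mode,
\begin{equation*}
e^{-\lambda_k^{\mu_1,\nu_1} t}\,e^{-\lambda_k^{\mu_2,\nu_2} t} = e^{-(\lambda_k^{\mu_1,\nu_1}+\lambda_k^{\mu_2,\nu_2})t} = e^{-\lambda_k^{\mu_1+\mu_2,\nu_1+\nu_2} t},
\end{equation*}
and the two scalar factors commute. Summing over $k\in\Z^3_0$ against an arbitrary $f\in\dot L^2(\T^3)$ and using that the partial sums converge in $\dot L^2$ (by the first part), one gets $S^{\mu_1,\nu_1}(t)S^{\mu_2,\nu_2}(t)f=S^{\mu_2,\nu_2}(t)S^{\mu_1,\nu_1}(t)f=S^{\mu_1+\mu_2,\nu_1+\nu_2}(t)f$ for every $f$, which is the claimed identity.

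The statement is of the ``sanity check'' type, so there is no real obstacle: the only point that needs a little care is the bookkeeping for the analytic extension on the negative scale $\dot H^{-s}$, where one has to check that the Fourier-defined semigroup really coincides with the closure/extension by density of the $\dot L^2$-semigroup, rather than with some unrelated object. This is handled by density of trigonometric polynomials in every $\dot H^s$, $s\in\R$, and the $\dot H^{-s}$-continuity estimate proved in the first step.
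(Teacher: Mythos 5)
Your proof is correct and follows exactly the route the paper intends: the lemma is stated there as an immediate consequence of the Fourier expansion \eqref{fourier_description_op}, and your mode-by-mode estimates on the multipliers $e^{-(\mu(k_1^2+k_2^2)+\nu k_3^2)t}$ (uniform decay $e^{-\nu t}$ since $\lvert k\rvert\geq 1$, sectorial holomorphic extension, and termwise multiplication of exponentials for the semigroup identity) are precisely the computations the paper leaves implicit. Nothing is missing.
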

\begin{lemma}\label{Properties semigroup}
Let $q\in \Dot{H}^{s}(\T^3),\ s\in \mathbb{R}$. Then:
\begin{enumerate}[label=\roman*)]
    \item for any $\varphi\geq 0,$ it holds $\lVert S^{\mu,\nu}(t)q\rVert_{\Dot{H}^{s+\varphi}}\lesssim_{\varphi} (t\nu)^{-\varphi/2}\lVert q\rVert_{\Dot{H}^{s}}$, the hidden constant being increasing in $\varphi$;
    \item for any $\varphi\in [0,2],$ it holds $\lVert \left( I-S^{\mu,\nu}(t)\right)q\rVert_{\Dot{H}^{s-\varphi}}\lesssim (t\mu)^{\varphi/2}\lVert q\rVert_{\Dot{H}^{s}}$. The hidden constant being uniformly bounded for $\varphi\in[0,2]$.
\end{enumerate}
\end{lemma}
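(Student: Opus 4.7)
The plan is to prove both bounds by diagonalizing everything in the Fourier basis $\{(2\pi)^{-3/2}e^{ik\cdot x}\}_{k\in\Z^3_0}$, which by \eqref{fourier_description_op} simultaneously diagonalizes $S^{\mu,\nu}(t)$ and is (up to the $|k|^{2s}$ weights) orthogonal in every $\Dot H^s(\T^3)$. Writing $q = \sum_k \hat q_k\, e^{ik\cdot x}$, the two estimates reduce to pointwise bounds on the multipliers
\[
m_t^{(i)}(k)=|k|^{2\varphi}\,e^{-2(\mu(k_1^2+k_2^2)+\nu k_3^2)t},\qquad m_t^{(ii)}(k)=|k|^{-2\varphi}\bigl(1-e^{-(\mu(k_1^2+k_2^2)+\nu k_3^2)t}\bigr)^{2}.
\]
The key structural fact that I will invoke throughout is the two-sided comparison
\[
\nu|k|^2\;\le\;\mu(k_1^2+k_2^2)+\nu k_3^2\;\le\;\mu|k|^2,
\]
which follows immediately from $\mu\ge\nu>0$ and supplies, respectively, the $\nu$ in (i) and the $\mu$ in (ii).

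For (i), I first bound the exponential from above by $e^{-2\nu|k|^2 t}$ using the left-hand comparison. Then I rewrite $|k|^{2\varphi}e^{-2\nu|k|^2 t}=(\nu t)^{-\varphi}\bigl(\nu t|k|^2\bigr)^{\varphi}e^{-2\nu|k|^2 t}$ and observe that $x\mapsto x^\varphi e^{-2x}$ is bounded on $[0,\infty)$ by a constant $C_\varphi$ which is increasing in $\varphi$ (its maximum is attained at $x=\varphi/2$ and equals $(\varphi/2)^\varphi e^{-\varphi}$). Multiplying by $|k|^{2s}|\hat q_k|^2$ and summing in $k$ yields assertion (i).

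For (ii), I use the elementary inequality $1-e^{-x}\le x^{\varphi/2}$ valid for $x\ge0$ and $\varphi\in[0,2]$ (this follows from $1-e^{-x}\le\min(1,x)$ together with $\min(1,x)\le x^{\varphi/2}$ on both $\{x\le1\}$ and $\{x\ge1\}$). Combined with the right-hand comparison above, this gives
\[
\bigl(1-e^{-(\mu(k_1^2+k_2^2)+\nu k_3^2)t}\bigr)^{2}\;\le\;\bigl(\mu(k_1^2+k_2^2)+\nu k_3^2\bigr)^{\varphi}t^{\varphi}\;\le\;(\mu t)^{\varphi}|k|^{2\varphi},
\]
so $m_t^{(ii)}(k)\le(\mu t)^{\varphi}$ uniformly in $k$. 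Plancherel then delivers (ii) with a constant uniform in $\varphi\in[0,2]$.

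There is no serious obstacle here: the only mild care point is aligning the two comparison inequalities correctly (using $\nu$ as the lower bound in (i) because we want smoothing at rate controlled by the weakest diffusion direction, and $\mu$ as the upper bound in (ii) because we want to control the deficit $I-S^{\mu,\nu}(t)$ by the strongest direction), and keeping track of the admissible range $\varphi\in[0,2]$ in (ii), which is exactly the range in which $x\mapsto x^{\varphi/2}$ dominates $1-e^{-x}$ globally. The extension/restriction to $\Dot H^{\pm s}$ required in the statement causes no trouble since the Fourier representation is valid on all the Sobolev scales of \autoref{properties_semigroup_splitting_extension}.
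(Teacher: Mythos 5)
Your proof is correct and is essentially the argument the paper has in mind: the lemma is stated as an immediate consequence of the Fourier representation \eqref{fourier_description_op}, and your multiplier bounds (using $\nu|k|^2\le \mu(k_1^2+k_2^2)+\nu k_3^2\le\mu|k|^2$ together with the boundedness of $x^{\varphi}e^{-2x}$ and of $(1-e^{-x})/x^{\varphi/2}$) are exactly the computations the paper itself carries out explicitly when proving \autoref{convergence_operators} in the appendix. One cosmetic slip: the maximum value $(\varphi/2)^{\varphi}e^{-\varphi}$ of $x^{\varphi}e^{-2x}$ is not itself increasing in $\varphi$ (it decreases on $(0,2)$), but this is harmless since one may always replace the sharp constant by its running supremum over $[0,\varphi]$, which is increasing and yields the statement as claimed.
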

Lastly we need a closedness results on operators of this form, which has some similarities with the second item of \autoref{Properties semigroup}. We postpone its simple proof to \autoref{app_convergence_operator} for the convenience of the reader.
\begin{lemma}\label{convergence_operators}
Let $q\in \Dot{H}^{s}(\T^3),\ s\in \mathbb{R}$. If $\mu_1\geq \nu_1>0$ and $\mu_2\geq \nu_2>0$,
 for any $\varphi\in [0,2],$ it holds \begin{align*}
     \lVert \left( S^{\mu_1,\nu_1}(t)-S^{\mu_2,\nu_2}(t)\right)q\rVert_{\Dot{H}^{s-\varphi}}\lesssim \left(\lvert \mu_1-\mu_2\rvert+\lvert \nu_1-\nu_2\rvert\right)^{\varphi/2}t^{\varphi/2}\lVert q\rVert_{\Dot{H}^{s}}. 
 \end{align*}
The hidden constant being uniformly bounded for $\varphi\in[0,2]$.
\end{lemma}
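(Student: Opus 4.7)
The plan is to work entirely in the Fourier basis, using the explicit representation \eqref{fourier_description_op} for $S^{\mu,\nu}(t)$. Writing $q = \frac{1}{(2\pi)^{3/2}} \sum_{k\in\Z^3_0} \hat{q}(k) e^{ik\cdot x}$ and using that the family $\{(2\pi)^{-3/2}e^{ik\cdot x}\}$ diagonalizes both operators simultaneously, the $\dot H^{s-\varphi}$ norm of the difference is an $\ell^2$ sum weighted by $|k|^{2(s-\varphi)}$ with multiplier
\begin{equation*}
m_k(t) := e^{-(\mu_1(k_1^2+k_2^2)+\nu_1 k_3^2)t} - e^{-(\mu_2(k_1^2+k_2^2)+\nu_2 k_3^2)t}.
\end{equation*}
Thus the statement reduces to the pointwise bound
\begin{equation*}
|m_k(t)| \;\lesssim\; \bigl(|\mu_1-\mu_2|+|\nu_1-\nu_2|\bigr)^{\varphi/2}\, t^{\varphi/2}\, |k|^{\varphi},
\end{equation*}
with a constant uniform in $\varphi\in[0,2]$, after which summing against $|k|^{2s}|\hat q(k)|^2$ yields the claim.

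To establish the pointwise bound, set $\alpha_i = (\mu_i(k_1^2+k_2^2)+\nu_i k_3^2)t \geq 0$. Two elementary inequalities are available: first, $|e^{-\alpha_1}-e^{-\alpha_2}| \leq 2$, and second, since $s \mapsto e^{-s}$ is $1$-Lipschitz on $[0,\infty)$, $|e^{-\alpha_1}-e^{-\alpha_2}| \leq |\alpha_1-\alpha_2|$. The second gives immediately
\begin{equation*}
|\alpha_1-\alpha_2| \leq \bigl(|\mu_1-\mu_2|(k_1^2+k_2^2)+|\nu_1-\nu_2|k_3^2\bigr) t \leq \bigl(|\mu_1-\mu_2|+|\nu_1-\nu_2|\bigr)|k|^2 t,
\end{equation*}
which is the endpoint case $\varphi=2$. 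The endpoint $\varphi=0$ is trivial from the uniform bound by $2$.

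For the intermediate range, I would simply interpolate multiplicatively: writing
\begin{equation*}
|m_k(t)| = |m_k(t)|^{\varphi/2}\cdot|m_k(t)|^{1-\varphi/2} \leq |\alpha_1-\alpha_2|^{\varphi/2}\cdot 2^{\,1-\varphi/2},
\end{equation*}
we obtain $|m_k(t)| \lesssim |\alpha_1-\alpha_2|^{\varphi/2}$ with a constant uniformly bounded for $\varphi\in[0,2]$, and inserting the previous estimate for $|\alpha_1-\alpha_2|$ gives exactly the desired pointwise inequality. Plugging back into the Fourier sum yields
\begin{equation*}
\|(S^{\mu_1,\nu_1}(t)-S^{\mu_2,\nu_2}(t))q\|_{\dot H^{s-\varphi}}^2 \lesssim \bigl(|\mu_1-\mu_2|+|\nu_1-\nu_2|\bigr)^{\varphi} t^{\varphi} \sum_{k\in\Z^3_0} |k|^{2s}|\hat q(k)|^2,
\end{equation*}
and the right-hand side is $\|q\|_{\dot H^s}^2$ up to the normalization from \eqref{fourier_description_op}.

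There is no real obstacle here; the proof is entirely at the level of the Fourier symbols. The only subtle point is ensuring the constant in the interpolation step stays uniform in $\varphi\in[0,2]$, which is why the factorization $|m_k|^{\varphi/2}|m_k|^{1-\varphi/2}$ with the two explicit bounds is preferable to applying the mean value theorem on $\varphi$. The statement is precisely the analogue, for differences of Fourier multipliers indexed by the anisotropic viscosity pair $(\mu,\nu)$, of item (ii) in \autoref{Properties semigroup}, and shares the same proof strategy.
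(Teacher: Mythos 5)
Your proof is correct, and the reduction to a pointwise bound on the Fourier multiplier $m_k(t)$ is exactly how the paper proceeds; the difference lies in how the scalar estimate $\lvert e^{-\alpha_1}-e^{-\alpha_2}\rvert\lesssim\lvert\alpha_1-\alpha_2\rvert^{\varphi/2}$ is obtained. You interpolate multiplicatively between the trivial bound $\lvert m_k\rvert\le 2$ and the Lipschitz bound $\lvert m_k\rvert\le\lvert\alpha_1-\alpha_2\rvert$, which handles all sign configurations of $\mu_1-\mu_2$ and $\nu_1-\nu_2$ at once and keeps the constant ($2^{1-\varphi/2}\le 2$) manifestly uniform in $\varphi\in[0,2]$. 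The paper instead performs a case analysis: when $\mu_1-\mu_2$ and $\nu_1-\nu_2$ have the same sign it factors out the smaller exponential and uses the uniform boundedness of $(1-e^{-x})/x^{\varphi/2}$ on $x>0$, and when the signs differ it splits the horizontal and vertical contributions before applying the same one-variable bound. The paper's route yields a marginally sharper constant in the same-sign case (the minimum of the two parameter differences rather than their sum), but this refinement is not used anywhere, so your argument is a cleaner and more elementary way to the stated estimate; both are purely symbol-level proofs in the spirit of item (ii) of \autoref{Properties semigroup}.
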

With some abuse of notation we denote by \begin{align*}
    A_{\mu,\nu}:\mathbf{H}^2\subseteq \mathbf{L}^2\rightarrow \mathbf{L}^2,\quad  A_{\mu,\nu} f=(A_{\mu,\nu} f^1,A_{\mu,\nu} f^2, A_{\mu,\nu} f^3)^t.
\end{align*}
The Fourier expansion of  \eqref{fourier_description_op}, then allows us to write for $f\in \mathbf{H}^s$
\begin{align*}
    A_{\mu,\nu}f=-\frac{1}{(2\pi)^3}\sumkj \left(\mu(k_1^2+k_2^2)+\nu k_3^2\right)\langle f, a_{k,j} e^{-ik\cdot x}\rangle e^{ik\cdot x}a_{k,j}. 
\end{align*}
This implies that $A_{\mu,\nu}$ generates an analytic semigroup on $\mathbf{L}^2$ which we still denote by $S^{\mu,\nu}(t)$ and \autoref{properties_semigroup_splitting_extension}, \autoref{Properties semigroup}, \autoref{convergence_operators} continue to hold in this vector valued framework.

\subsection{Description of the Stochastic System}\label{subsect description noise}
We begin with the description of the noise. We give a general description which cover all the three regimes we are interested in: we call them the Helical noise, the perturbed 2D noise and the Isotropic noise.
Choose $\rho\in [-1,1]$, a correlation factor whose role will be that of introducing some correlation between the vertical and horizontal components of the `2D' modes of the random field. Then assume  $\left(\Omega,\mathcal{F},\mathcal{F}_t,\mathbb{P}\right)$ is a filtered probability space such that $(\Omega, \mathcal{F},\mathbb{P})$ is a complete probability space, $(\mathcal{F}_t)_{t\in [0,T]}$ is a right continuous filtration and $\mathcal{F}_0$ contains every $\mathbb{P}$ null subset of $\Omega$.
Now, for each $n\in \N$, $\{W_t^{k,j}\}, \ {{k\in \Z^{3}_0, \  j\in \{1,2\}}}$ is a sequence of complex-valued Brownian motions adapted to $\mathcal{F}_t$ 
    such that
$W^{-k,j}_t=(W^{k,j}_t)^*$ and 
\begin{align*}
    \expt{W^{k,j}_1{W^{l,m}_1}^*}&=\begin{cases}
        2 & \textit{if } k=l, \ m=j\\
        2\rho &  \textit{if } k=l,\ k_3=0,\ m\neq j\\
        0 & \textit{otherwise};
    \end{cases}
    \quad \left[W^{k,j}_\cdot, W^{l,m}_\cdot\right]_t=\begin{cases}
        2t & \textit{if } k=-l, \ m=j\\
        2\rho t &  \textit{if } k=-l,\ k_3=0,\ m\neq j\\
        0 & \textit{otherwise}.    
        \end{cases}
\end{align*}
Then, for each $ k\in \Z^{3}_0,\ j\in\{1,2\}$ we denote by $\skj(x)=\tkj  a_{k,j}e^{ik\cdot x}$, where $\{\frac{k}{\lvert k\rvert}, a_{k,1}, a_{k,2}\}$ is an orthonormal system of $\R^3$ for $k_3> 0$ and $a_{k,j}=a_{-k,j}$ if $k_3<0$, while the (complex) coefficients $\theta_{k,j}^\eps$ will be defined later. 
It is well known that the family $\{a_{k,j}\frac{1}{2\pi\sqrt{2\pi}}e^{ik\cdot x}\}, \ {{k\in \Z^{3}_0, \ j\in\{1,2\}}}$ is a complete orthogonal systems of $\mathbf{H}^s$ made by eigenfunctions of $-\Delta$. Moreover it is orthonormal in $\mathbf{L}^2$. 
Without losing of generality, we make a choice that will simplify some of the computations: when $k_3=0$ we choose $a_{k,1}=(-k_2/\lvert k\rvert,k_1/\lvert k\rvert,0),\ a_{k,2}=(0,0,1)$ if $k\in \Gamma_{+}$, $a_{k,j}=a_{-k,j}$ otherwise. 
For a physical intuition behind our construction see our previous work \cite{butori2024stratonovich} and the discussion therein. 
Then we define \begin{align*}
    W^{n}_t=\sum_{\substack{k\in \Z^{3}_0\\ j\in \{1,2\}}}\skj W^{k,j}_t,
\end{align*} with $\skj=\tkj a_{k,j}e^{ik\cdot x}$.
With our choices, to ensure that $W^n$ takes values in a space of real valued functions, the coefficients must satisfy the condition \begin{equation}\label{symmetry of tks}
    {\tkj}^{*}=\theta_{-k,j}^n.\end{equation}
The explicit choices of the coefficients $\tkj$:
\begin{align}\label{definition thetakj}
    \tkj &=\one_{\{n\leq \lvert k\rvert\leq 2n\} }\begin{cases}
        \frac{i}{C_{1,H}\lvert k\rvert^{\alpha/2}}& \text{if } k_3=0, k\in\Gamma_{+},\ j=1\\
         \frac{-i}{C_{1,H}\lvert k\rvert^{\alpha/2}}& \text{if } k_3=0, k\in\Gamma_{-},\ j=1\\
        \frac{1}{C_{2,H}\lvert k\rvert^{\beta/2}}& \text{if } k_3=0,\ j=2\\
        \frac{1}{C_{V}\lvert k\rvert^{\gamma/2}} & \text{if } k_3\neq 0
    \end{cases}
\end{align}
 allows us to distinguish our three regimes:
\begin{hypothesis}[Helical]\label{HP noise 1} $ $
\begin{itemize}
    \item  $\alpha\geq 2,\ \beta>2,\ \gamma>3,\ \alpha+\beta=6$.
    \item  $C_{1,H},\ C_{2,H},\ C_V>0$ and $C_{1,H}>\sqrt{\frac{\zeta_{H,2}}{\eta}}$ if $\alpha=2$.
\end{itemize}
\end{hypothesis}
\begin{hypothesis}[Perturbed 2D]\label{HP noise 2} $ $
\begin{itemize}
    \item  $\alpha=2,\ \beta=4,\ \gamma\geq 5$.
    \item  $C_{1,H}>\sqrt{\frac{\zeta_{H,2}}{\eta}},\ C_{2,H},\ C_V>0$.
\end{itemize}
\end{hypothesis}
\begin{hypothesis}[Isotropic]\label{HP noise isotropo} $ $
\begin{itemize}
    \item  $\alpha=\beta=\gamma=3$.
    \item  $C_{1,H}=C_{2,H}=C_V>\sqrt{\frac{2\zeta_{s,3}}{3\eta}}$.
\end{itemize}
\end{hypothesis}
\begin{remark}
    The term `isotropic' in \autoref{HP noise isotropo} is understood with a slight abuse, indeed it refers to the fact that the scaling parameters are all equal, but not to the fact that the fields $W^n_t$ are actually isotropic, a condition that is met only in the special case of $\rho=0$, and if one sets all the coefficients to be real. However, since our analysis and our main result remains unchanged whether if $\rho$ is zero or not in this regime, we included also the latter case for generality. \\
    The Perturbed 2D regime, instead, is a sub case of the Helical regime. Its relevance comes from the fact that due to the strong suppression of true 3D modes ($\gamma \ge 5$), this model represents a small 3D perturbation of a 2D-3C flow with weak vertical components ($\beta = 4$), and akin to the works \cite{flandoli2023boussinesq}, \cite{butori2024stratonovich} which previously employed similar noises, this choice is the only one that allows us to obtain a slightly stronger convergence result.
\end{remark}
\begin{remark}\label{remark coefficients }
    Due to our choice of the coefficients $\tkj$ and the vectors $a_{k,j}$ it follows that for each $k$ such that $k_3=0,\ n\leq \lvert k\rvert \leq 2n$
    \begin{align*}
        \tkjcomp{1} a_{k,1}=\frac{i}{C_{1,H}}\frac{k^{\perp}}{\lvert k\rvert^{\alpha/2+1}},\ \tkjcomp{2} a_{k,2}=\frac{1}{C_{2,H}}\frac{e_3}{\lvert k\rvert^{\beta/2+1}}.
    \end{align*}
    \end{remark}
The space covariance function associated to our noise is \begin{align}\label{def covariance}
Q^{n}(x,y)=\expt{W^{n}_1(x) \otimes {W^{n}_1(y)}^* }=Q^{n}_0(x,y)+\overline{Q^{n}_{\rho}}(x,y)=\overline{Q^{n}}(x,y)+\left(Q^{n}\right)'(x,y)+\overline{Q^{n}_{\rho}}(x,y)
\end{align} 
where we denoted by \begin{align*}
\overline{Q^{n}}(x,y)&=2\sumkmeanj \left\lvert \tkj\right\rvert^2 a_{k,j}\otimes a_{k,j}e^{ik\cdot(x-y)},\quad \left(Q^{n}\right)'(x,y)=2\sumkoscj \left\lvert \tkj\right\rvert^2 a_{k,j}\otimes a_{k,j}e^{ik\cdot(x-y)},\\
\overline{Q^{n}_{\rho}}(x,y)&=2\rho \sumkmeancov  \left(\tkjcomp{1}\left(\tkjcomp{2}\right)^* a_{k,1}\otimes a_{k,2}+\tkjcomp{2}\left(\tkjcomp{1}\right)^* a_{k,2}\otimes a_{k,1}\right)e^{ik\cdot(x-y)}.   
\end{align*}
Due to our choice, we have that $\overline{Q^{n}}(x,y),$ $\left(Q^{n}\right)'(x,y)$ and $\overline{Q^{n}_{\rho}}(x,y)$ are all translation invariant, therefore we simply write $Q^{n}(x-y)$ (resp. $Q^{n}_0(x-y),\ \overline{Q^{n}}(x-y),\ \left(Q^{n}\right)'(x-y),\ \overline{Q^{n}_{\rho}}(x-y)$) in place of  $Q^{n}(x,y)$ (resp. $Q^{n}_0(x,y),\ \overline{Q^{n}}(x,y),\ \left(Q^{n}\right)'(x,y),\ \overline{Q^{n}_{\rho}}(x,y)$). In particular $Q^{n}_0(x),\ \overline{Q^{n}}(x),\ \left(Q^{n}\right)'(x)$ are mirror symmetric, namely it holds 
\begin{align}\label{mirror symmetric property 1}
Q^{n}_0(x)=Q^{n}_0(-x),\ \overline{Q^{n}}(x)=\overline{Q^{n}}(-x),\ \left(Q^{n}\right)'(x)=\left(Q^{n}\right)'(-x).  
\end{align}
On the contrary $\overline{Q^{n}_{\rho}}(x)$ is an odd function, in particular $\overline{Q^{n}_{\rho}}(0)=0$. As a consequence, our noise satisfies the mirror symmetry property if and only if $\rho=0$.

We recall here some properties of the covariance functions $\overline{Q^{n}_{0}}(x), (Q^{n}_0)'(x), \overline{Q^{n}_{\rho}}(x)$. Their proofs are a simple adaptation of the arguments of \cite[Section 2.3]{butori2024stratonovich} to which we refer for details.

\begin{lemma}\label{form covariance matrix non-isotropic}
If either \autoref{HP noise 1} or \autoref{HP noise 2} hold, then 
\begin{align*}
    \overline{Q^n_0}(0)=2\eta^n_T(e_1\otimes e_1+e_2\otimes e_2)+2\eta^n_R\left(e_3\otimes e_3\right),\quad (Q^{n}_0)'(0)=2(\eta^n_V-\eta^n_{R,V}) I+\eta^n_{R,V}I_2,
\end{align*}
where we set
\begin{align*}
    \eta^n_T&=\frac{\zeta_{H,\alpha}^n}{2C_{1,H}^2 n^{\alpha-2}}=O(n^{2-\alpha}),\quad \eta^n_R=\frac{\zeta^n_{H,\beta}}{C_{2,H}^2n^{\beta-2}}=O(n^{2-\beta}),\quad \eta^n_V=\frac{2\zeta^n_{s,\gamma-3}}{3C_V^2 n^{\gamma-3}}=O(n^{3-\gamma}),\\
    \eta^n_{R,V}&=\frac{\zeta^n_{H,\gamma}}{C_V^2 n^{\gamma-2}}=O(n^{2-\gamma}).
\end{align*}
In particular, if $\alpha=2$
\begin{align*}
    \eta^n_T=\frac{\zeta_{H,2}}{2C_{1,H}^2 }+O(n^{-1}).
\end{align*}
\end{lemma}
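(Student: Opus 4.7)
The plan is to compute both matrices directly from the Fourier series definitions of $\overline{Q^n}(x)$ and $(Q^n)'(x)$ evaluated at $x=0$, exploiting the explicit forms of $\theta^n_{k,j}$ and $a_{k,j}$ from \autoref{subsect description noise} together with \autoref{remark coefficients }. In both cases the off-diagonal entries in the canonical basis vanish by the parity symmetries $k_i \mapsto -k_i$ of the lattice sums, and it remains only to identify the three diagonal entries.

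For $\overline{Q^n_0}(0) = 2\sum_{k_3=0,\,j}|\theta^n_{k,j}|^2\,a_{k,j}\otimes a_{k,j}$ I would split the sum into $j=1$ and $j=2$. By \autoref{remark coefficients }, the summand for $j=2$ equals $(e_3\otimes e_3)/(C_{2,H}^2|k|^\beta)$, which yields directly $2\eta^n_R\,e_3\otimes e_3$ once the sum is rewritten using the definition of $\zeta^n_{H,\beta}$. For $j=1$, the summand equals $(k^\perp\otimes k^\perp)/(C_{1,H}^2|k|^{\alpha+2})$; the symmetry $k_1\leftrightarrow k_2$ on the sum domain gives $\sum k_1^2/|k|^{\alpha+2}=\sum k_2^2/|k|^{\alpha+2}=\tfrac{1}{2}\sum 1/|k|^\alpha$, while $\sum k_1 k_2/|k|^{\alpha+2}$ vanishes by $k_2\mapsto -k_2$. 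Collecting the pieces yields $2\eta^n_T(e_1\otimes e_1+e_2\otimes e_2)$, which closes the first identity.

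For $(Q^n_0)'(0)$, I would exploit that $\{k/|k|,a_{k,1},a_{k,2}\}$ is orthonormal for $k_3\ne 0$, so that $a_{k,1}\otimes a_{k,1}+a_{k,2}\otimes a_{k,2}=I-(k\otimes k)/|k|^2$. Consequently
\[
(Q^n_0)'(0)=\frac{2}{C_V^2}\sum_{\substack{k_3\ne 0\\ n\le|k|\le 2n}}\frac{1}{|k|^\gamma}\left(I-\frac{k\otimes k}{|k|^2}\right).
\]
After discarding off-diagonal terms by parity, I split $\sum_{k_3\ne 0}=\sum_{\text{all }k}-\sum_{k_3=0}$. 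On the full lattice, coordinate-permutation symmetry gives $\sum k_i^2/|k|^{\gamma+2}=\tfrac{1}{3}\sum 1/|k|^\gamma=\tfrac{1}{3}n^{3-\gamma}\zeta^n_{s,\gamma}$ for each $i\in\{1,2,3\}$, while on the horizontal slice $k_3=0$ only $i\in\{1,2\}$ contribute, each equal to $\tfrac{1}{2}n^{2-\gamma}\zeta^n_{H,\gamma}$. Assembling the four contributions (isotropic $I/|k|^\gamma$ minus diagonal $k_i^2/|k|^{\gamma+2}$, both on the full lattice and on the horizontal slice) produces diagonal entries $2\eta^n_V-\eta^n_{R,V}$ in the first two slots and $2\eta^n_V-2\eta^n_{R,V}$ in the third, which is exactly $2(\eta^n_V-\eta^n_{R,V})I+\eta^n_{R,V}I_2$ with $I_2=e_1\otimes e_1+e_2\otimes e_2$.

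The asymptotic orders follow immediately from the recalled expansions $\zeta^n_{s,j}=\zeta_{s,j}+O(n^{-1})$ and $\zeta^n_{H,j}=\zeta_{H,j}+O(n^{-1})$, the singular cases $j=2,3$ being excluded under \autoref{HP noise 1}--\autoref{HP noise 2} since then $\beta>2$ and $\gamma>3$; for the borderline $\alpha=2$ one simply plugs $\zeta^n_{H,2}=\zeta_{H,2}+O(n^{-1})$ into the formula for $\eta^n_T$ to obtain the \emph{in particular} refinement. I expect the only genuine difficulty to be the careful bookkeeping for $(Q^n_0)'(0)$: the interaction of the ``all $k$'' and ``$k_3=0$'' sums with the breaking of cubic symmetry by the $k_3\ne 0$ restriction must be tracked cleanly in order to match the specific combination $2(\eta^n_V-\eta^n_{R,V})I+\eta^n_{R,V}I_2$.
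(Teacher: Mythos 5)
Your proposal is correct and is exactly the direct computation the paper omits: the paper does not prove this lemma but defers to Section 2.3 of \cite{butori2024stratonovich}, and the intended argument there is the same evaluation of the Fourier sums at $x=0$ using the explicit $\theta^n_{k,j}$, $a_{k,j}$, parity cancellation of off-diagonal terms, and the identity $a_{k,1}\otimes a_{k,1}+a_{k,2}\otimes a_{k,2}=I-k\otimes k/\lvert k\rvert^2$ for the $k_3\neq 0$ modes.

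One caveat you should make explicit rather than gloss over: your (correct) bookkeeping for $(Q^n_0)'(0)$ produces the vertical coefficient
\begin{align*}
\eta^n_V=\frac{2}{3C_V^2}\sum_{\substack{k\in\Z^3_0\\ n\leq\lvert k\rvert\leq 2n}}\frac{1}{\lvert k\rvert^{\gamma}}=\frac{2\,\zeta^n_{s,\gamma}}{3C_V^2\,n^{\gamma-3}},
\end{align*}
whereas the statement as printed writes $\zeta^n_{s,\gamma-3}$ in place of $\zeta^n_{s,\gamma}$; these agree in order $O(n^{3-\gamma})$ but not as exact constants, so your claim that the assembled diagonal entries ``exactly'' match the displayed $\eta^n_V$ is literally false with the paper's printed definition. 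The computation you did is the right one: it is consistent with \autoref{form covariance matrix isotropic} (which at $\gamma=3$ gives $\eta^n_{iso}=\tfrac{2\zeta^n_{s,3}}{3C_V^2}$) and with the constants appearing in \autoref{Ito_Strat_correct_H-1}, so the subscript $\gamma-3$ is best read as a typo; since $\gamma>3$ under \autoref{HP noise 1} and \autoref{HP noise 2}, only the order of $\eta^n_V$ is used downstream and nothing else changes. A further minor point: there is no ``singular case'' to exclude for $j=2,3$ — the paper's $\zeta_{H,2}$ and $\zeta_{s,3}$ are the finite logarithmic values — so that remark in your last paragraph is unnecessary, though harmless.
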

\begin{lemma}\label{form covariance matrix isotropic}
If \autoref{HP noise isotropo} holds, then 
\begin{align*}
    {Q^n_0}(0)=2\eta^n_{iso} I,
\end{align*}
where we set
\begin{align*}
    \eta^n_{iso}=\frac{2\zeta^n_{s,3}}{3C_V^2}=\frac{2\zeta_{s,3}}{3C_V^2}+O(n^{-1}).
\end{align*}
\end{lemma}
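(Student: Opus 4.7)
The plan is to compute $Q^n_0(0)$ directly from its Fourier representation, exploiting the fact that the isotropic hypothesis makes all modes share the same decay rate $|k|^{-3}$, which in turn allows a clean rotational symmetry argument.

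First, I would observe that under \autoref{HP noise isotropo} the formula \eqref{definition thetakj} simplifies uniformly: for every $k$ with $n \le |k| \le 2n$ and every $j \in \{1,2\}$ one has $|\tkj|^2 = \frac{1}{C_V^2 |k|^3}$, regardless of whether $k_3 = 0$ or $k_3 \neq 0$. Therefore the two sums $\overline{Q^n}(0)$ and $(Q^n)'(0)$ can be combined into a single sum over $\Z^3_0$, yielding
\begin{align*}
    Q^n_0(0) = \frac{2}{C_V^2}\sum_{\substack{k\in\Z^3_0\\ n\le |k|\le 2n}}\frac{1}{|k|^3}\bigl(a_{k,1}\otimes a_{k,1}+a_{k,2}\otimes a_{k,2}\bigr).
\end{align*}

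Next I would use the crucial identity stemming from the fact that $\{k/|k|, a_{k,1}, a_{k,2}\}$ is an orthonormal basis of $\R^3$, namely
\begin{align*}
    a_{k,1}\otimes a_{k,1}+a_{k,2}\otimes a_{k,2} = I - \frac{k\otimes k}{|k|^2},
\end{align*}
which rewrites the expression as
\begin{align*}
    Q^n_0(0) = \frac{2}{C_V^2}\sum_{\substack{k\in\Z^3_0\\ n\le |k|\le 2n}}\frac{1}{|k|^3}\,I\;-\;\frac{2}{C_V^2}\sum_{\substack{k\in\Z^3_0\\ n\le |k|\le 2n}}\frac{k\otimes k}{|k|^5}.
\end{align*}
The first sum is just $\zeta^n_{s,3}I$ by the very definition of $\zeta^n_{s,3}$ with $j=3$. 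For the second sum I would argue by the full cubic symmetry of the index set $\{n\le|k|\le 2n\}$: reflecting any coordinate $k_i \mapsto -k_i$ leaves the set invariant, so all off-diagonal entries $\sum k_i k_j/|k|^5$ with $i\neq j$ vanish, while the three diagonal entries are equal and hence each equals $\frac{1}{3}\sum |k|^2/|k|^5 = \frac{1}{3}\zeta^n_{s,3}$. Thus this second sum equals $\frac{\zeta^n_{s,3}}{3}I$.

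Combining these two contributions gives $Q^n_0(0) = \frac{2}{C_V^2}\bigl(\zeta^n_{s,3} - \tfrac{1}{3}\zeta^n_{s,3}\bigr)I = \frac{4\zeta^n_{s,3}}{3C_V^2}I = 2\eta^n_{iso}\,I$, which is the desired identity. The asymptotic $\eta^n_{iso} = \frac{2\zeta_{s,3}}{3C_V^2}+O(n^{-1})$ then follows immediately from the Riemann-sum approximation $\zeta^n_{s,3} = \zeta_{s,3}+O(n^{-1})$ already recorded in \autoref{notation sec}. There is no real obstacle here; the only subtle point is to correctly identify the dyadic spherical shell as a symmetric set under coordinate reflections so that the symmetry cancellation argument applies.
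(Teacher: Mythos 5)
Your proof is correct: under \autoref{HP noise isotropo} every coefficient collapses to $\lvert\tkj\rvert^2=1/(C_V^2\lvert k\rvert^3)$, and the completeness identity $a_{k,1}\otimes a_{k,1}+a_{k,2}\otimes a_{k,2}=I-\frac{k\otimes k}{\lvert k\rvert^2}$ combined with the reflection and permutation symmetry of the dyadic shell $\{n\le\lvert k\rvert\le 2n\}$ yields exactly $Q^n_0(0)=\frac{4\zeta^n_{s,3}}{3C_V^2}I=2\eta^n_{iso}I$, with the asymptotics following from $\zeta^n_{s,3}=\zeta_{s,3}+O(n^{-1})$. This is essentially the same direct Fourier computation the paper relies on, which it only sketches by referring to the adaptation of \cite[Section 2.3]{butori2024stratonovich}.
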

\begin{lemma}\label{lemma covariance operator 2}
It holds
\begin{align}\label{property derivatives 5}
    \left(\partial_{l} \overline{Q^{n}_0}\right)(0)&=2i\sum_{\substack{k\in \Z_0^3,\\
    k_3=0,\ j\in \{1,2\}}} \left\lvert\tkj\right\rvert^2 k_l a_{k,j}\otimes a_{k,j}=0,\quad \left(\partial_{l} {Q^{n}_0}\right)'(0)&=2i\sum_{\substack{k\in \Z_0^3,\\
    k_3\neq 0,\ j\in \{1,2\}}} \left\lvert\tkj\right\rvert^2 k_l a_{k,j}\otimes a_{k,j}=0.
\end{align}
As a consequence, for each $l,r,s\in \{1,2,3\}$ and $f\in L^2(\T^3),\ g\in H^1(\T^3)$ it holds
\begin{align}\label{mirror symmetry product 0}
    \sum_{\substack{k\in \Z_0^3,\\
    k_3=0,\ j\in \{1,2\}}}  \langle\partial_l g\skjcomp{l}, f\partial_r \smkjcomp{s}\rangle=\sum_{\substack{k\in \Z_0^3,\\
    k_3\neq 0,\ j\in \{1,2\}}}  \langle\partial_l g\skjcomp{l}, f\partial_r \smkjcomp{s}\rangle=0.
\end{align}
\end{lemma}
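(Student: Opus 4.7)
The plan is a direct Fourier-series computation, exploiting the Hermitian symmetry $\theta^n_{-k,j}=(\theta^n_{k,j})^{\ast}$ of \eqref{symmetry of tks} together with the convention $a_{-k,j}=a_{k,j}$ fixed in \autoref{subsect description noise}. Starting from the Fourier expansions
\[
\overline{Q^n_0}(x)=2\!\!\sum_{\substack{k\in\Z^3_0,\,k_3=0\\ j\in\{1,2\}}}\!\!\lvert\theta^n_{k,j}\rvert^2\, a_{k,j}\otimes a_{k,j}\,e^{ik\cdot x},\qquad (Q^n_0)'(x)=2\!\!\sum_{\substack{k\in\Z^3_0,\,k_3\neq 0\\ j\in\{1,2\}}}\!\!\lvert\theta^n_{k,j}\rvert^2\, a_{k,j}\otimes a_{k,j}\,e^{ik\cdot x},
\]
a term-by-term differentiation in $x$ followed by evaluation at $x=0$ immediately yields the two expressions with factor $2ik_l$ claimed in \eqref{property derivatives 5}.

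To show that both sums are zero, I would change variables $k\mapsto -k$. The symmetries $\lvert\theta^n_{-k,j}\rvert^2=\lvert\theta^n_{k,j}\rvert^2$ and $a_{-k,j}\otimes a_{-k,j}=a_{k,j}\otimes a_{k,j}$ turn the summand into an odd function of $k$ (because of the factor $k_l$), so each sum equals its opposite and therefore vanishes. Equivalently, this is just the fact that the first-order derivatives at the origin of the even functions $\overline{Q^n_0}$ and $(Q^n_0)'$ must vanish, already recorded in \eqref{mirror symmetric property 1}.

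For the consequence \eqref{mirror symmetry product 0}, the crucial observation is that the phases $e^{ik\cdot x}$ in $\sigma_{k,j}^{n,l}$ and $e^{-ik\cdot x}$ in $\sigma_{-k,j}^{n,s}$ cancel out, so
\[
\sigma_{k,j}^{n,l}(x)\,\partial_r\sigma_{-k,j}^{n,s}(x)=-ik_r\,\theta^n_{k,j}\theta^n_{-k,j}\,a^l_{k,j}a^s_{-k,j}=-ik_r\lvert\theta^n_{k,j}\rvert^2 a^l_{k,j}a^s_{k,j}
\]
is a constant in $x$. Inserting this into the pairing and pulling the constant out of the integral factors the inner product as $-ik_r\lvert\theta^n_{k,j}\rvert^2 a^l_{k,j}a^s_{k,j}\int_{\T^3}\partial_l g\cdot f\,dx$; summing over the admissible $(k,j)$, the scalar integral factors out and what remains is, up to the factor $-1/2$, the $(l,s)$-entry of $\partial_r\overline{Q^n_0}(0)$ or of $\partial_r(Q^n_0)'(0)$, both of which vanish by the first part.

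No real difficulty is expected: the statement reduces to explicit computations in Fourier basis, the decisive ingredient being the phase cancellation that makes the summand constant in $x$ and reduces the consequence to the first identity.
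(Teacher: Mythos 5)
Your proof is correct and follows exactly the computation the paper has in mind (the paper itself only refers to \cite[Section 2.3]{butori2024stratonovich} for these covariance lemmas): term-by-term differentiation of the Fourier expansion, the $k\mapsto-k$ symmetry of $\lvert\theta^n_{k,j}\rvert^2$ and $a_{k,j}\otimes a_{k,j}$ killing the odd factor $k_l$, and the phase cancellation $e^{ik\cdot x}e^{-ik\cdot x}=1$ (legitimate here because $\langle\cdot,\cdot\rangle$ is the bilinear duality pairing) reducing \eqref{mirror symmetry product 0} to the vanishing derivatives in \eqref{property derivatives 5}. Nothing is missing.
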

\begin{lemma}\label{convergence properties matrix}
We have \begin{align}\label{uniform bound step 1}
    \nabla\overline{Q^{n}_{\rho}}(0)=\frac{2\rho}{C_{1,H}C_{2,H}}\sumH \frac{1}{\lvert k_H\rvert^{2}}  \left(e_3 \otimes\frac{k_H^{\perp}}{\lvert k_H\rvert}\otimes  \frac{k_H}{\lvert k_H\rvert}-\frac{k_H^{\perp}}{\lvert k_H\rvert}\otimes e_3\otimes \frac{k_H}{\lvert k_H\rvert}\right).  
    \end{align}
In particular, it holds 
\begin{align*}
    \norm{\nabla\overline{Q^{n}_{\rho}}(0)}_{HS}\leq \frac{2\sqrt{2}\lvert\rho\rvert \zeta^n_{H,2}  }{C_{1,H}C_{2,H}}
\end{align*}
and
\begin{align*}
\{\mathcal{R}_{n}^{r,s}\}_{r,s\in\{1,2\}}&:=\partial_{s}\overline{Q^{n,3,r}_{\rho}}(0)= \frac{\rho \zeta^N_{H,2}} {C_{1,H}C_{2,H}}\begin{bmatrix} 
   0 & -1\\ 1 & 0   \end{bmatrix}=\frac{\rho \zeta_{H,2}} {C_{1,H}C_{2,H}}\begin{bmatrix} 
   0 & -1\\ 1 & 0   \end{bmatrix} +O(n^{-1}),\\
   & \{\mathcal{V}_{n}^{r,s}\}_{r,s\in\{1,2\}}:=\partial_{s}\overline{Q^{n,r,3}_{\rho}}(0)=-\partial_{s}\overline{Q^{n,3,r}_{\rho}}(0). 
\end{align*}
\end{lemma}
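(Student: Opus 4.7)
The proof is a direct Fourier-side computation. I would begin by writing
\[
\overline{Q^{n}_{\rho}}(x)=2\rho\sumkmeancov F_k\,e^{ik\cdot x},\qquad F_k=\theta^n_{k,1}(\theta^n_{k,2})^* a_{k,1}\otimes a_{k,2}+\theta^n_{k,2}(\theta^n_{k,1})^* a_{k,2}\otimes a_{k,1},
\]
with $F_k$ supported on $n\le|k|\le 2n$ by the indicator in \eqref{definition thetakj}. The first step is to reduce $F_k$ to a closed form. Plugging in the explicit coefficients from \autoref{remark coefficients } together with $a_{k,1}=k^\perp/|k|$ and $a_{k,2}=e_3$ for $k\in\Gamma_+$ with $k_3=0$, and using the relations $\theta^n_{-k,j}=(\theta^n_{k,j})^*$ and $a_{-k,j}=a_{k,j}$ to extend to $\Gamma_-$, one checks that under both \autoref{HP noise 1} and \autoref{HP noise 2} (where $\alpha+\beta=6$) the two cases $k\in\Gamma_\pm$ give the same expression
\[
F_k=\frac{i}{C_{1,H}C_{2,H}|k|^{4}}\bigl(k^\perp\otimes e_3-e_3\otimes k^\perp\bigr).
\]

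Differentiating at $x=0$ gives $\partial_s\overline{Q^n_{\rho}}(0)=2\rho\sumkmeancov ik_s F_k$; the two imaginary units multiply to $-1$ and, after reindexing the sum over $k$ with $k_3=0$ as a sum over $k_H\in\Z^2_0$ (with $k=(k_H,0)$, so $k^\perp=(k_H^\perp,0)$ and $|k|=|k_H|$) and regrouping powers of $|k_H|$, one arrives at exactly \eqref{uniform bound step 1}.

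For the Hilbert--Schmidt bound, the key observation is that the two rank-one tensors $e_3\otimes\tfrac{k_H^\perp}{|k_H|}\otimes\tfrac{k_H}{|k_H|}$ and $\tfrac{k_H^\perp}{|k_H|}\otimes e_3\otimes\tfrac{k_H}{|k_H|}$ are HS-orthogonal: their HS inner product factorizes as $(e_3\cdot k_H^\perp)(k_H^\perp\cdot e_3)(k_H\cdot k_H)/|k_H|^2$ and vanishes because $e_3\perp k_H^\perp$. Each has unit HS-norm, so each summand in \eqref{uniform bound step 1} has HS-norm $\sqrt 2/|k_H|^2$, and the triangle inequality combined with $\zeta^n_{H,2}=\sumH 1/|k_H|^2$ yields the announced bound.

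To finish, I compute $\mathcal{R}_n^{r,s}$ by taking the $(a,b)=(3,r)$ component of the tensor formula: only the $e_3\otimes k_H^\perp$ summand contributes because $(e_3)_r=0$ for $r\in\{1,2\}$, giving
\[
\mathcal{R}_n^{r,s}=\frac{2\rho}{C_{1,H}C_{2,H}}\sumH\frac{(k_H^\perp)_r(k_H)_s}{|k_H|^4}.
\]
The reflection symmetry $k_H\mapsto -k_H$ kills the diagonal entries ($r=s$); the symmetry $(k_1,k_2)\leftrightarrow(k_2,k_1)$ forces $\sumH k_1^2/|k_H|^4=\sumH k_2^2/|k_H|^4=\tfrac12\zeta^n_{H,2}$; and inserting the signs from $k_H^\perp=(-k_2,k_1)$ produces the announced antisymmetric $2\times 2$ matrix. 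The identity $\mathcal{V}_n^{r,s}=-\mathcal{R}_n^{r,s}$ follows from the same formula with $(a,b)=(r,3)$: now only the $-k_H^\perp\otimes e_3$ term contributes. The asymptotic $\zeta^n_{H,2}=\zeta_{H,2}+O(n^{-1})$ was already recorded in the paper. The one mildly delicate point is tracking the signs through the $\Gamma_\pm$ splitting in the simplification of $F_k$; everything else is routine bookkeeping with tensor indices and Riemann-sum symmetries.
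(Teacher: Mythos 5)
Your computation is correct and is essentially the proof the paper intends: the paper gives no argument here, deferring to \cite[Section 2.3]{butori2024stratonovich}, and the natural proof is exactly the direct Fourier-side computation you perform (closed form of $F_k$ on $\Gamma_\pm$ using \eqref{definition thetakj} and \autoref{remark coefficients }, differentiation of $2\rho\sum_k F_k e^{ik\cdot x}$ at $x=0$, HS-orthogonality of the two rank-one tensors since $e_3\perp k_H^{\perp}$, and symmetry reductions of the shell sums, with $\zeta^n_{H,2}=\zeta_{H,2}+O(n^{-1})$ from the notation section).

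One genuine, though easily repaired, slip: the antipodal symmetry $k_H\mapsto -k_H$ does \emph{not} kill the diagonal entries of $\mathcal{R}_n$, because the summand $(k_H^{\perp})_r(k_H)_r/|k_H|^4=\mp k_1k_2/|k_H|^4$ is even under $k_H\mapsto -k_H$, so that symmetry proves nothing here. The diagonal entries do vanish, but the correct reason is invariance of the shell $\{n\le |k_H|\le 2n\}$ under the single-coordinate reflection $(k_1,k_2)\mapsto(k_1,-k_2)$, under which $k_1k_2$ changes sign; the swap symmetry you invoke for the off-diagonal entries is fine. A minor completeness point: the closed form of $F_k$ with the exponent $4$ uses $\alpha+\beta=6$, which holds not only under \autoref{HP noise 1} and \autoref{HP noise 2} but also under \autoref{HP noise isotropo} ($\alpha=\beta=3$), so the lemma is covered in all three regimes as stated.
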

As discussed in \cite[Chapter 3]{FlaLuoWaseda}, the quantity $Q^n(0)$ is the core in order to obtain a nontrivial second order operator in our It\^o-Stratonovich diffusion limit, i.e. the \emph{beta} effect we discussed in \autoref{sec introduction}. In particular, the choice of \autoref{HP noise 1} and $\alpha=2$ or \autoref{HP noise isotropo} allows us to obtain a non trivial \emph{beta} effect in the scaling limit. As discussed in \autoref{sec introduction}, the \emph{alpha} effect is instead linked to the helicity of the velocity field. The role of the helicity is usually understood in terms of distortion of the axial magnetic field lines that generates currents in the rotating direction, which in turn induce magnetic fields in the axial direction, see for example \cite[Chapter 3, figure 3.2]{krause2016mean} and related comments. In our stochastic framework we denote the time average of the helicity density of our random velocity field as
\begin{align*}
    \H^n(x):= \frac{1}{2T}\expt{W^n_T(x)\cdot (\grad \times W^n_T(x))}.
\end{align*}
Exploiting the definition of our random vector field and arguing as in \cite[Appendix B]{butori2024stratonovich} we can easily see that $\H^n(x)$ is independent on $x$ and it satisfies
\begin{align*}
    \H^n(x)=-\frac{2\rho\zeta^{n}_{H,\frac{\alpha+\beta-2}{2}}}{C_{1,H}C_{2,H}n^{\frac{\alpha+\beta-6}{2}}}\longrightarrow \H:=-\frac{4\pi\rho\log 2}{C_{1,H}C_{2,H}},
\end{align*}
i.e. the choice of $\alpha+\beta=6$ is linked to reach a finite and nontrivial \emph{alpha} term in the limit. 
\subsection{Main Results}\label{sec main result}
Following the ideas introduced in \autoref{sec introduction}, we fix $T\in (0,+\infty)$ and we are interested in studying the properties of the following stochastic model on $\T^3$:
\begin{equation}\label{introductory equation stratonovich}
    \begin{cases}
        dB^{n}_{t}&=\eta \Delta B^{n}_{t}  dt + \sum_{\substack{k\in \Z^{3}_0\\ j\in \{1,2\}}}\mathcal{L}_{\skj}B^{n}_t  \circ dW^{k,j}_t, \\
        \operatorname{div}B^{n}_t&=0, \\
        B^{n}_t|_{t=0}&=B^{n}_0,
    \end{cases}
\end{equation}
with the coefficients $\skj$ defined in \autoref{subsect description noise}. We start rewriting it in It\^o form. First, let us observe that for each $k\in\Z^{3}_0,\ j\in \{1,2\}$ we have due to the linearity of $\mathcal{L}_{\skj}$
\begin{align*}
\mathcal{L}_{\skj}B^{n}_t  \circ dW^{k,j}_t&=\mathcal{L}_{\skj}B^{n}_t dW^{k,j}_t+\frac{1}{2}\left[ \mathcal{L}_{\skj}B^{n}_\cdot, W^{k,j}_\cdot\right]_t \\ & = \mathcal{L}_{\skj}B^{n}_t dW^{k,j}_t+\frac{1}{2}\mathcal{L}_{\skj}\left[ B^{n}_\cdot, W^{k,j}_\cdot\right]_t\\ & =\mathcal{L}_{\skj}B^{n}_t dW^{k,j}_t+\mathcal{L}_{\skj}\mathcal{L}_{\smkj}B^{n}_t dt+\rho\one_{\{k_3=0,\ l\neq j\}} \mathcal{L}_{\skj}\mathcal{L}_{\smk{l}}B^{n}_t dt.
\end{align*}
Therefore the equation for $B^{n}_t$ can be rewritten as
\begin{align*}
    \begin{cases}
        dB^{n}_{t}&=\left(\eta \Delta+\sumkj \mathcal{L}_{\skj}\mathcal{L}_{\smkj}+\rho\sumkmeancov \mathcal{L}_{\sk{1}}\mathcal{L}_{\smk{2}}+\mathcal{L}_{\sk{2}}\mathcal{L}_{\smk{1}} \right)  B^{n}_{t}  dt\\ & + \sum_{\substack{k\in \Z^{3}_0\\ j\in \{1,2\}}}\mathcal{L}_{\skj}B^{n}_t   dW^{k,j}_t, \\
        \operatorname{div}B^{n}_t&=0, \\
        B^{n}_t|_{t=0}&=B^{n}_0.
    \end{cases}    
\end{align*}
We are left to identify $\sumkj \mathcal{L}_{\skj}\mathcal{L}_{\smkj}+\rho\sumkmeancov \mathcal{L}_{\sk{1}}\mathcal{L}_{\smk{2}}+\mathcal{L}_{\sk{2}}\mathcal{L}_{\smk{1}} $.  In \autoref{app_ito_strat}, we sketch the proof of the following standard result for the convenience of the reader, in order to highlight the role of $Q^n_0(0)$ and $\nabla \overline{Q^n_{\rho}}(0)$ in our procedure.
\begin{lemma}\label{lemma ito strat corrector}
If $F:\T^3\rightarrow\R^3$ is a smooth, zero mean divergence free vector field, then it holds 
\begin{align*}
    \sumkj \mathcal{L}_{\skj}\mathcal{L}_{\smkj} F+\rho\sumkmeancov \left(\mathcal{L}_{\sk{1}}\mathcal{L}_{\smk{2}}+\mathcal{L}_{\sk{2}}\mathcal{L}_{\smk{1}}\right)F&=\Lambda^{n}F+ \Lambda_{\rho}^{n}F,\quad  
\end{align*}
where in the formula above we denoted by $\Lambda^{n}$ the differential operator 
\begin{align*}
\Lambda^{n}F=\begin{cases}
    (\eta^{n}_T+\eta^n_V-\eta^n_{R,V}/2)(\partial^2_{1}+\partial^2_{2})F+(\eta^{n}_R+\eta^n_V-\eta^n_{R,V})\partial^2_{3}F& \quad \text{ if \autoref{HP noise 1} holds},\\
    \eta^n_{iso}\Delta F & \quad \text{ if \autoref{HP noise isotropo} holds}
\end{cases}
\end{align*}
and by $\Lambda_{\rho}^{n}$ the differential operator 
\begin{align*}
\Lambda_{\rho}^{n}F&=-\sum_{l\in \{1,2\}} \partial_l \overline{Q^{n}_{\rho}}(0)\cdot\nabla F_l=-\sum_{\substack{j\in \{1,2,3\}\\l\in \{1,2\}}} \partial_l \overline{Q^{n,\cdot, j}_{\rho}}(0)\partial_jF_l\\
&=\nabla\times(\mathcal{A}^n_{\rho}F),
\end{align*}
where
\begin{align*}
    \mathcal{A}^n_{\rho}=
   \frac{\rho\zeta^n_{H,2}} {C_{1,H}C_{2,H}}\begin{bmatrix} 
   1 & 0 & 0 \\ 0 & 1 & 0\\ 0 & 0 & 0
   \end{bmatrix}.
\end{align*}
In particular it holds
\begin{align*}
(\Lambda_{\rho}^{n}F)_H=\frac{\rho \zeta^n_{H,2}} {C_{1,H}C_{2,H}}\partial_3F_H^{\perp},\quad (\Lambda_{\rho}^{n}F)_3=-\frac{\rho \zeta^n_{H,2}} {C_{1,H}C_{2,H}}\operatorname{div}_H(F_H^{\perp}).    
\end{align*}
\end{lemma}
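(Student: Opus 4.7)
The plan is a direct, structured computation following the standard Itô–Stratonovich corrector pattern for transport-type noise. First I would expand, for fixed $(k,j)$, the double Lie derivative with $\sigma=\sigma_{k,j}^n$, $\tau=\sigma_{-k,j}^n$:
\begin{align*}
(\mathcal{L}_{\sigma}\mathcal{L}_{\tau}F)_a &= \sigma_c\tau_b\partial_c\partial_b F_a + \sigma_c(\partial_c\tau_b)\partial_b F_a - \sigma_c(\partial_c F_b)\partial_b\tau_a \\
&\quad - \sigma_c F_b\partial_c\partial_b\tau_a - \tau_b(\partial_b F_c)\partial_c\sigma_a + F_b(\partial_b\tau_c)\partial_c\sigma_a.
\end{align*}
Setting $\Sigma^n(x,y):=\sum_{k,j}\sigma_{k,j}^n(x)\otimes\sigma_{-k,j}^n(y)$, one easily verifies (using $\theta_{-k,j}^n=(\theta_{k,j}^n)^*$ and $a_{-k,j}=a_{k,j}$) that $\Sigma^n(x,y)=\tfrac12 Q^n_0(x-y)$. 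Therefore, after summing over $(k,j)$, each of the six terms above becomes a partial derivative of $\Sigma^n$ evaluated on the diagonal $y=x$, i.e. a derivative of $Q^n_0$ at $0$, contracted against $F$ and its derivatives.

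The key simplifications are now the following. The mirror symmetry \eqref{mirror symmetric property 1} implies $\nabla Q^n_0(0)=0$ (which is exactly \autoref{lemma covariance operator 2}); this kills the four first-order terms (those with one derivative on $\sigma$ or $\tau$ and one on $F$). The two remaining zeroth-order-on-$F$ terms (with two derivatives on $\tau$, or one each on $\sigma$ and $\tau$) contract $\partial_c\partial_b Q^n_{0,ca}(0)$ against $F_b$; but since each $\sigma_{k,j}^n$ is divergence-free, $\partial_a (Q^n_0)_{ab}\equiv 0$ identically, so this term vanishes as well. The only survivor is the pure second-order term
\begin{align*}
\sum_{k,j}\sigma_{k,j,c}^n\sigma_{-k,j,b}^n\partial_c\partial_b F_a = \tfrac12 (Q^n_0)_{cb}(0)\,\partial_c\partial_b F_a,
\end{align*}
and substituting the explicit diagonal form of $Q^n_0(0)$ from \autoref{form covariance matrix non-isotropic} (resp. \autoref{form covariance matrix isotropic}) gives exactly $\Lambda^n F$ in both regimes.

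The cross $\rho$-terms are handled by the same six-term expansion, this time with the generating tensor
\begin{align*}
R^n(x,y) := \sum_{k_3=0}\bigl[\sigma_{k,1}^n(x)\otimes\sigma_{-k,2}^n(y)+\sigma_{k,2}^n(x)\otimes\sigma_{-k,1}^n(y)\bigr],
\end{align*}
which satisfies $2\rho R^n(x,y)=\overline{Q^n_\rho}(x-y)$. Now the roles are reversed: since $\overline{Q^n_\rho}$ is odd we have $\overline{Q^n_\rho}(0)=0$, so the pure second-order term is annihilated; the div-free condition again kills the zeroth-order terms; but the first-order terms involving $\nabla \overline{Q^n_\rho}(0)$ now survive. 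A short index manipulation, using that those terms combine in the form $-\sum_{l=1,2}\partial_l\overline{Q^n_\rho}(0)\cdot\nabla F_l$, matches the claimed expression for $\Lambda^n_\rho F$. Finally, the curl form $\Lambda^n_\rho F=\nabla\times(\mathcal{A}^n_\rho F)$ and the two displayed formulas for $(\Lambda^n_\rho F)_H$ and $(\Lambda^n_\rho F)_3$ are verified component-by-component by plugging in the explicit matrix $\partial_l\overline{Q^n_\rho}(0)$ computed in \autoref{convergence properties matrix} (recall $\mathcal{R}_n$ is the $(3,\cdot)$-block and $\mathcal{V}_n=-\mathcal{R}_n$ is the $(\cdot,3)$-block, so only mixed horizontal–vertical entries contribute).

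The only real obstacle is careful bookkeeping: keeping track of which index on $Q^n_0(x-y)$ is differentiated in $x$ versus $y$ (so that signs from $\partial_x=-\partial_y$ are correctly applied), and repeatedly invoking the div-free contraction $\sum_a \partial_a (Q^n_0)_{ab}=\sum_a \partial_a \overline{Q^n_\rho}_{ab}=0$ to discard spurious zeroth-order contributions. No analytic difficulty arises since $F$ is smooth and everything is finite-dimensional on each Fourier mode; the sums converge absolutely under the summability of $|\theta_{k,j}^n|^2$ implicit in all hypotheses.
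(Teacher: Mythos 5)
Your proposal is correct and follows essentially the same route as the paper's appendix proof: the same expansion of the double Lie derivatives, with the pure second-order part identified as $\tfrac12 Q^n_0(0):\nabla^2F=\Lambda^n F$ via \autoref{form covariance matrix non-isotropic} and \autoref{form covariance matrix isotropic}, the remaining diagonal terms killed by incompressibility ($a_{k,j}\cdot k=0$, equivalently the divergence-free contractions) and by $\nabla Q^n_0(0)=0$ from \autoref{lemma covariance operator 2}, and the cross terms surviving only through $\nabla\overline{Q^{n}_{\rho}}(0)$ and matched to $-\sum_{l}\partial_l\overline{Q^{n}_{\rho}}(0)\cdot\nabla F_l=\Lambda^n_\rho F$ via \autoref{convergence properties matrix}. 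The only cosmetic slip is the count of ``four'' first-order terms (there are three terms with one derivative on $F$), which does not affect the argument.
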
    
According to \autoref{lemma ito strat corrector}, equation \eqref{introductory equation stratonovich} can be rewritten in It\^o form as:
\begin{equation}\label{Introductory equation Ito}
    \begin{cases}
        dB^{n}_{t}&=\left(\eta \Delta+\Lambda^{n}+\Lambda^{n}_{\rho}\right)B^{n}_{t}  dt + \sum_{\substack{k\in \Z^{3}_0\\ j\in \{1,2\}}}\mathcal{L}_{\skj}B^{n}_t   dW^{k,j}_t, \\
        \operatorname{div}B^{n}_t&=0, \\
        B^{n}_t|_{t=0}&=B^{n}_0,
    \end{cases}
\end{equation}
We give now the definition of weak solution for the Stochastic equation of the magnetic field  \eqref{Introductory equation Ito}.

\begin{definition}\label{well posed def}
A stochastic process \begin{align*}
    B^{n}\in C_{\mathcal{F}}([0,T];\mathbf{L}^2)\cap L^2_{\mathcal{F}}([0,T];\mathbf{H}^1)
\end{align*} is a weak solution of equation \eqref{Introductory equation Ito} if, for every $\phi\in \mathbf{H}^2$, we have
\begin{align}\label{weak formulation full system}
    \langle B^{n}_t,\phi\rangle
    &=  \langle B^{n}_0,\phi\rangle+ \eta \int_0^t  \langle B^{n}_s,\Delta\phi\rangle \, ds+\int_0^t  \langle B^{n}_s,\Lambda^{n}\phi\rangle\, ds+\int_0^t  \langle \Lambda^{n}_{\rho}B^{n}_s,\phi\rangle\, ds+\sum_{\substack{k\in \Z^{3}_0\\ j\in \{1,2\}}}\int_0^t\langle \mathcal{L}_{\skj}B^{n}_s,\phi\rangle   dW^{k,j}_s.
\end{align}
for every $t\in [0,T],\ \mathbb{P}-a.s.$
\end{definition}
\begin{remark}\label{general_duality_pairing}
Due to the definition of the operator $P$, each vector field $\phi\in \Dot{H}^{2}(\T^3;\R^3)$ can be decomposed in a divergence free part, i.e. $P\phi\in \mathbf{H}^2$, and a part orthogonal to $\mathbf{L}^2$, i.e. $(I-P)\phi\in \Dot{H}^{2}(\T^3;\R^3)$. Therefore for a generic $\phi\in \Dot{H}^{2}(\T^3;\R^3) $ we get
\begin{align*}
 \langle B^{n}_t,\phi\rangle-\langle B^{n}_0,\phi\rangle&= \langle B^{n}_t,P\phi\rangle-\langle B^{n}_0,P\phi\rangle\\
    &=  \eta \int_0^t  \langle B^{n}_s,\Delta P\phi\rangle \, ds+\int_0^t  \langle B^{n}_s,\Lambda^{n}P\phi\rangle\, ds+\int_0^t  \langle \Lambda^{n}_{\rho}B^{n}_s,P\phi\rangle\, ds\\ &+\sum_{\substack{k\in \Z^{3}_0\\ j\in \{1,2\}}}\int_0^t\langle \mathcal{L}_{\skj}B^{n}_s,P\phi\rangle   dW^{k,j}_s\\ & =  \eta \int_0^t  \langle B^{n}_s,\Delta \phi\rangle \, ds+\int_0^t  \langle B^{n}_s,\Lambda^{n}\phi\rangle\, ds+\int_0^t  \langle \Lambda^{n}_{\rho}B^{n}_s,\phi\rangle\, ds+\sum_{\substack{k\in \Z^{3}_0\\ j\in \{1,2\}}}\int_0^t\langle \mathcal{L}_{\skj}B^{n}_s,\phi\rangle   dW^{k,j}_s
\end{align*}
for every $t\in [0,T],\ \mathbb{P}-a.s.$, due to the fact that $B^n\in C_{\mathcal{F}}([0,T];\mathbf{L}^2)\cap L^2_{\mathcal{F}}([0,T];\mathbf{H}^1) $ and the structure of the operators $\Lambda^n,\ \Lambda^n_{\rho}$. In particular, if we choose $\phi=(0,0,\varphi)\in \Dot{H}^2(\T^3;\R^3)$ we obtain
\begin{align*}
    \langle B^{n,3}_t,\varphi\rangle-\langle B^{n,3}_0,\varphi\rangle&=\eta \int_0^t  \langle B^{n,3}_s,\Delta \varphi\rangle \, ds+\int_0^t  \langle B^{n,3}_s,\Lambda^{n}\varphi\rangle\, ds+\frac{\rho \zeta^n_{H,2}} {C_{1,H}C_{2,H}}\int_0^t  \langle {B^{n,H}_s}^{\perp},\nabla_H\varphi\rangle\, ds\\ &+\sum_{\substack{k\in \Z^{3}_0\\ j\in \{1,2\}}}\int_0^t\langle \skj\cdot\nabla B^{n,3}_s-B^{n,3}\cdot\nabla\skjcomp{3},\varphi\rangle   dW^{k,j}_s
\end{align*}
for every $t\in [0,T],\ \mathbb{P}-a.s.$
\end{remark}
Due to the fact that equation \eqref{Introductory equation Ito} is linear, the existence and uniqueness of solutions in the sense of \autoref{well posed def} is a standard fact which follows by the abstract theory of \cite[Chapters 3-5]{Flandoli_Book_95}, see also \cite[Section 3.1]{flandoli2022heat}. Indeed the following holds.

\begin{theorem}\label{thm well posed}
    For each $B^{n}_0\in \mathbf{L}^2$ there exists a unique weak solution of system \eqref{Introductory equation Ito} in the sense of \autoref{well posed def}.
\end{theorem}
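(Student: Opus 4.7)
Equation \eqref{Introductory equation Ito} is a linear SPDE with unbounded drift and multiplicative transport-type noise, so my plan is the standard Galerkin scheme combined with variational energy methods in the Gelfand triple $V := \mathbf{H}^1 \hookrightarrow H := \mathbf{L}^2 \hookrightarrow V^* = \mathbf{H}^{-1}$. For each $N\in\mathbb{N}$ let $\Pi_N$ denote the orthogonal projection of $H$ onto the span of the first $N$ divergence-free Fourier eigenfunctions $\{a_{k,j}e^{ik\cdot x}/(2\pi)^{3/2}\}$, which is a finite-dimensional subspace of $\mathbf{H}^s$ for every $s$. The projected equation, with initial datum $\Pi_N B^n_0$, is a linear SDE with bounded coefficients in a finite dimensional space and therefore admits a unique global strong solution $B^{n,N}$.

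The main step is a uniform-in-$N$ energy estimate. Applying It\^o's formula to $\|B^{n,N}_t\|^2$ yields
\begin{equation*}
d\|B^{n,N}_t\|^2 = -2\eta\|\nabla B^{n,N}_t\|^2\,dt + 2\langle B^{n,N}_t,(\Lambda^n+\Lambda^n_\rho)B^{n,N}_t\rangle\,dt + \sum_{k,j}\|\Pi_N\mathcal{L}_{\skj}B^{n,N}_t\|^2\,dt + dM_t,
\end{equation*}
with $M$ a local martingale. By \autoref{lemma ito strat corrector} the operator $\Lambda^n_\rho$ has the form $\nabla\times(\mathcal{A}^n_\rho\,\cdot\,)$ and is antisymmetric in $H$, so $\langle B,\Lambda^n_\rho B\rangle=0$. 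The decisive algebraic identity, obtained by expanding $\mathcal{L}_{\skj}B = \skj\cdot\nabla B - B\cdot\nabla\skj$ and using the Fourier expansion together with \autoref{form covariance matrix non-isotropic}--\autoref{lemma covariance operator 2}, is that the corrector $2\langle B,\Lambda^n B\rangle$ exactly cancels the transport-squared contribution in $\sum\|\mathcal{L}_{\skj}B\|^2$, leaving a remainder controlled by
\begin{equation*}
2\langle B,\Lambda^n B\rangle + \sum_{k,j}\|\mathcal{L}_{\skj}B\|^2 \leq 2\kappa^n\|\nabla B\|^2 + C^n\|B\|^2,
\end{equation*}
where $\kappa^n$ is (up to constants) $\eta_T^n$ in the helical regime and $\eta_{iso}^n$ in the isotropic regime. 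The strict inequalities on $C_{1,H},C_V$ in Hypotheses \ref{HP noise 1}--\ref{HP noise isotropo} are precisely the conditions $\kappa^n<\eta$, so the estimate becomes
\begin{equation*}
d\|B^{n,N}_t\|^2 + 2(\eta-\kappa^n)\|\nabla B^{n,N}_t\|^2\,dt \leq C^n\|B^{n,N}_t\|^2\,dt + dM_t,
\end{equation*}
and Gronwall together with the Burkholder--Davis--Gundy inequality yields uniform bounds for $B^{n,N}$ in $L^2_\mathcal{F}(0,T;V)$ and in $L^2(\Omega;C([0,T];H))$.

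From here the proof concludes in the usual way. By weak(-$*$) compactness extract a subsequence converging weakly in $L^2_\mathcal{F}(0,T;V)$ and weak-$*$ in $L^\infty_\mathcal{F}(0,T;H)$ to some $B^n$, and pass to the limit in the weak formulation tested against any fixed Galerkin mode: linearity of all operators $\eta\Delta$, $\Lambda^n$, $\Lambda^n_\rho$, $\mathcal{L}_{\skj}$ makes this routine without any compactness-in-time (Skorokhod) argument. Weak continuity in $H$ is then read off the limit equation. Uniqueness is immediate: the difference $D_t$ of two weak solutions with identical initial datum solves the same equation with $D_0=0$, and the same energy estimate forces $\mathbb{E}\|D_t\|^2\equiv 0$. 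The main obstacle is the coercivity inequality above; while for pure transport noise the Stratonovich-to-It\^o corrector cancels the martingale quadratic variation for free, the stretching term $B\cdot\nabla\skj$ produces a residual quadratic form in $\nabla B$ whose coefficient must be strictly dominated by $\eta$, and this is exactly what the quantitative bounds on $C_{1,H}$ and $C_V$ encode, in agreement with the negative-eddy-viscosity discussion in the introduction.
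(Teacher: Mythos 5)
The paper does not actually write out a proof of \autoref{thm well posed}: it observes that the equation is linear and invokes the abstract variational theory of Flandoli's lecture notes, which is in substance the same Galerkin/Gelfand-triple route you propose in $\mathbf{H}^1\subset\mathbf{L}^2\subset\mathbf{H}^{-1}$. However, two steps of your argument are incorrect as written. First, $\langle B,\Lambda^n_\rho B\rangle$ does \emph{not} vanish: $\Lambda^n_\rho B=\nabla\times(\mathcal{A}^n_\rho B)$ with $\mathcal{A}^n_\rho$ a constant symmetric matrix is not an antisymmetric operator on $\mathbf{L}^2$; its quadratic form is a helicity-type quantity, and its non-vanishing on Beltrami-type fields is exactly what drives the exponential growth in \autoref{dynamo_effect}, where $\langle\Lambda^n_\rho B^n_t,\overline{B}_0\rangle=-\lambda\tfrac{\mathcal{H}^n}{2}b^n_{\lambda,t}$. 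For well-posedness the term is harmless only because it is of first order, so it can be estimated by $\epsilon\lVert\nabla B\rVert^2+C_\epsilon\lVert B\rVert^2$ and absorbed; it must be bounded, not discarded.

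Second, and more substantively, your coercivity inequality with remainder $2\kappa^n\lVert\nabla B\rVert^2$, $\kappa^n\sim\eta^n_T$ (resp.\ $\eta^n_{iso}$), together with the claim that the strict inequalities on $C_{1,H},C_V$ in the Hypotheses are ``precisely'' the condition $\kappa^n<\eta$, misplaces where those conditions enter. \autoref{thm well posed} is a fixed-$n$ statement with no restriction of this kind; in the Helical regime with $\alpha>2$ no lower bound on $C_{1,H}$ is assumed, so for a fixed small $n$ one may well have $\eta^n_T\geq\eta$ and your estimate would not close, even though the theorem still holds. The correct observation is that for fixed $n$ the coercivity is unconditional: since $\Lambda^n=\tfrac12\div\big(Q^n_0(0)\nabla\cdot\big)$ while (with the paper's normalization $[W^{k,j},W^{-k,j}]_t=2t$) the pure-transport part of the It\^o correction equals $\int\nabla B^i\cdot Q^n_0(0)\nabla B^i\,dx$, the second-order contributions cancel \emph{exactly}, and what remains are the stretching-squared terms $\lesssim_n\lVert B\rVert^2$, the transport--stretching cross terms $\lesssim_n\lVert\nabla B\rVert\,\lVert B\rVert$, and the analogous $\rho$-correlation terms, all of which are absorbed by $\eta\lVert\nabla B\rVert^2$ via Young's inequality with an arbitrarily small $\epsilon$ (constants may blow up in $n$, which is irrelevant here). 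The quantitative hypotheses on $C_{1,H},C_V$ are needed only for the uniform-in-$n$ $\mathbf{H}^{-1}$ estimates of \autoref{prop:compact_space}, where the quadratic variation is genuinely comparable to the available dissipation (cf.\ \autoref{rmk negative eddy}). With these two corrections your Galerkin argument goes through and indeed reproduces the abstract theory the paper cites.
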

As discussed in \autoref{sec introduction}, we study the convergence of $\Bt{t}$ in the scaling limit of the separation of scales, i.e. considering a noise which concentrates on smaller and smaller scales as described in \autoref{subsect description noise}.
Lastly we introduce our limit objects. Following the idea first introduced in \cite{galeati2020convergence}, we expect that our limit objects satisfy a PDE with an additional second order operator with intensity related to \begin{align*}
    \lim_{n \rightarrow +\infty}\sum_{\substack{k\in \Z^{3}_0\\ j\in \{1,2\}}}\norm{\skj}^2.
\end{align*}
The quantity above is different from $0$ if $\alpha=2$ in \autoref{HP noise 1}, \autoref{HP noise 2} or if it holds \autoref{HP noise isotropo}.
Besides to the fact that the limit above may be $0$ under our assumptions, the final deterministic objects keep memory of our noise thanks to the fact that $\alpha+\beta=6$ for each $n$, which implies that our noise has a nontrivial limit helicity. Denoting by $\overline{B_t}$ the unique weak solutions of the following linear 3D PDE 
\begin{align}\label{limit solution A3}
&\begin{cases}
\partial_t \overline{B_t}&=(\eta\Delta+ \Lambda_{\alpha,\beta,\gamma})\overline{B_t}+ \nabla\times(\mathcal{A}_{\rho}\overline{B_t})\quad x\in \T^3,\ t\in (0,T)\\
\operatorname{div}\overline{B}&=0\quad x\in \T^3,\ t\in (0,T)\\
\overline{B_t}|_{t=0}&=\overline{B_0},   
\end{cases}
\end{align}
where 
\begin{align*}
\Lambda_{\alpha,\beta,\gamma}&=\begin{cases}
    0 \quad & \text{if \autoref{HP noise 1} holds and $\alpha> 2$,}\\
    \eta_T\Delta_H\quad & \text{if \autoref{HP noise 1} holds and $\alpha= 2$,}\\
    \eta_{iso}\Delta \quad & \text{if \autoref{HP noise isotropo} holds}, 
\end{cases}\quad
\mathcal{A}_{\rho}=
   \frac{2\pi\rho\log{2}} {C_{1,H}C_{2,H}}\begin{bmatrix} 
   1 & 0 & 0 \\ 0 & 1 & 0\\ 0 & 0 & 0
   \end{bmatrix},
\end{align*}
 $\eta_T=\frac{\zeta_{H,2}}{2C_{1,H}^2},\ \eta_{iso}=\frac{2\zeta_{s,3}}{3 C_V^2}$.
Our main result reads as follows:
\begin{theorem}\label{main Theorem}
Let $B_0^n\in \mathbf{L}^2$, assuming \autoref{HP noise 1} or \autoref{HP noise isotropo} and $
B^n_0\rightharpoonup \overline{B_0} \text{ in }  \mathbf{H}^{-1}, $ for each  $\kappa\in [1,2),\ \theta\in (0,2),\ \delta\in (0,\theta)$ we have
\begin{align}\label{main thm ineq 1}
\sup_{t\in [0,T]}\expt{\norm{B^n_t- \overline{B_t}}_{\mathbf{H}^{-1-\theta}}^\kappa}  &\lesssim \begin{cases}
    \frac{1}{n^{\kappa\theta \chi(\alpha,\beta,\gamma)/2}}+\frac{1}{n^{\frac{\kappa(\alpha\wedge\beta\wedge\gamma)(\theta-\delta)}{6+\delta}}}+\norm{\overline{B_0}-B^n_0}^\kappa_{\mathbf{H}^{-1-\theta}}&   \text{in case of \autoref{HP noise 1} },\\
    \frac{1}{n^{\frac{3\kappa(\theta-\delta)}{6+\delta}}}+\norm{\overline{B_0}-B^n_0}^\kappa_{\mathbf{H}^{-1-\theta}}&  \text{in case of \autoref{HP noise isotropo} }.
    \end{cases}\end{align}
Moreover, assuming also $B^{n,3}\rightharpoonup\overline{B^{3}_0}$ in $\Dot{L}^2(\T^3;\R^3)$, in case of \autoref{HP noise 2} we have also for each $\vartheta\in (\theta,\frac{3}{2}]$
\begin{align}
\label{main thm ineq 2} \sup_{t\in [0,T]}\expt{\norm{B^{n,3}_t-\overline{B^3_t}}_{\Dot H^{-\vartheta}}^\kappa}&\lesssim   \frac{1 }{n^{\frac{\kappa(\vartheta-\theta)(\theta-\delta)}{6+\delta}}} +\frac{1}{(\vartheta-\theta)^{\kappa}n^{\kappa\theta/2}}+\norm{\overline{B^3_0}-B^{n,3}_0}^{\kappa}_{\Dot{H}^{-\vartheta}}+\frac{\norm{\overline{B_0}-B^n_0}_{\mathbf{H}^{-1-\theta}}^{\kappa}}{(\vartheta-\theta)^{\kappa}}.
\end{align}
In particular either the right hand side of \eqref{main thm ineq 1} and of \eqref{main thm ineq 2} converge to $0$ as $n\rightarrow +\infty$.

\end{theorem}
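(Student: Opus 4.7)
I would compare $B^n$ and $\overline{B}$ through their mild formulations. Let $S^n(t)$ and $\overline{S}(t)$ denote the semigroups generated respectively by $\eta\Delta+\Lambda^n$ and $\eta\Delta+\Lambda_{\alpha,\beta,\gamma}$; both operators have the structure analysed in \autoref{operators_periodic_setting}, so \autoref{properties_semigroup_splitting_extension}--\autoref{convergence_operators} apply. Writing
\begin{align*}
 B^n_t-\overline{B_t} &= \bigl(S^n(t)-\overline{S}(t)\bigr)\overline{B_0} + S^n(t)(B^n_0-\overline{B_0}) + \int_0^t S^n(t-s)\Lambda^n_\rho(B^n_s-\overline{B_s})\,ds\\
 &\quad + \int_0^t S^n(t-s)\bigl(\Lambda^n_\rho-\nabla\times\mathcal{A}_\rho\bigr)\overline{B_s}\,ds + \int_0^t \bigl(S^n(t-s)-\overline{S}(t-s)\bigr)\nabla\times(\mathcal{A}_\rho\overline{B_s})\,ds + M^n_t,
\end{align*}
with $M^n_t:=\sumkj\int_0^t S^n(t-s)\mathcal{L}_{\skj}B^n_s\,dW^{k,j}_s$, I would measure every term in $\mathbf{H}^{-1-\theta}$ and close via a Grönwall argument on the first integral. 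The two semigroup mismatches are controlled by \autoref{convergence_operators} applied with $\varphi=\theta$, using that \autoref{form covariance matrix non-isotropic}/\autoref{form covariance matrix isotropic} identify the coefficient differences between $\Lambda^n$ and $\Lambda_{\alpha,\beta,\gamma}$ at the rate encoded by $\chi(\alpha,\beta,\gamma)$, producing the prefactor $n^{-\kappa\theta\chi(\alpha,\beta,\gamma)/2}$. The drift mismatch $(\Lambda^n_\rho-\nabla\times\mathcal{A}_\rho)\overline{B_s}$ is $O(n^{-1})$ by \autoref{convergence properties matrix} and is absorbed by the other rates, while the initial-data term contributes exactly $\norm{\overline{B_0}-B^n_0}^{\kappa}_{\mathbf{H}^{-1-\theta}}$.

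\textbf{Main obstacle: the martingale term.} The crux is to show that $\expt{\norm{M^n_t}^\kappa_{\mathbf{H}^{-1-\theta}}}\to 0$ despite the stretching contribution to $\mathcal{L}_{\skj}$, the very phenomenon singled out in \autoref{sec introduction} as blocking earlier 3D extensions of this scheme. By the It\^o isometry and \autoref{Properties semigroup},
\begin{align*}
 \expt{\norm{M^n_t}^2_{\mathbf{H}^{-1-\theta}}}\lesssim \sumkj \int_0^t \expt{\norm{\mathcal{L}_{\skj}B^n_s}^2_{\mathbf{H}^{-1-\theta}}}\,ds.
\end{align*}
The gain comes from the fact that each $\skj$ is a pure Fourier mode with $|k|\in[n,2n]$, so $\mathcal{L}_{\skj}B^n_s$ is morally a frequency-shift of $\widehat{B^n_s}$ by $k$; in $\mathbf{H}^{-1-\theta}$ this produces an extra factor of order $n^{-2(1+\theta)}$. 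Balancing this gain against the combinatorial cost $\sumkj|\tkj|^2|k|^2\lesssim n^{6-(\alpha\wedge\beta\wedge\gamma)}$ of the stretching kernel, and interpolating the uniform $L^\infty_t\mathbf{L}^2$ and $L^2_t\mathbf{H}^1$ a priori bounds supplied by \autoref{section a priori estimates} (the small loss $\delta>0$ in $\theta-\delta$ and the exponent $(6+\delta)^{-1}$ are precisely what arises from this interpolation with a Kolmogorov-style trade-off), one obtains the rate $n^{-\kappa(\alpha\wedge\beta\wedge\gamma)(\theta-\delta)/(6+\delta)}$. Passing from $\kappa=2$ to arbitrary $\kappa\in[1,2)$ is a Jensen inequality.

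\textbf{The perturbed 2D improvement.} For \eqref{main thm ineq 2} I would work directly with the scalar equation for $B^{n,3}$ written in \autoref{general_duality_pairing}. Its stochastic integrand involves only the third component of $\skj$, which for $\gamma\ge 5$ is strongly damped; running the same mild-formulation scheme on this scalar equation yields stronger bounds on the corresponding vertical martingale in $\Dot H^{-\vartheta}$, producing the rate $n^{-\kappa(\vartheta-\theta)(\theta-\delta)/(6+\delta)}$. The coupling of $B^{n,3}$ back to the horizontal field through the drift $\frac{\rho\zeta^n_{H,2}}{C_{1,H}C_{2,H}}\langle{B^{n,H}_s}^\perp,\nabla_H\varphi\rangle$ of \autoref{general_duality_pairing} is closed by inserting the bound \eqref{main thm ineq 1} already proved for the full field. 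The singular prefactors $(\vartheta-\theta)^{-\kappa}$ appear when interpolating between the $\mathbf{H}^{-1-\theta}$ estimate of \eqref{main thm ineq 1} and the uniform $\mathbf{L}^2$ a priori bound to reach a $\Dot H^{-\vartheta}$ estimate, while the residual $n^{-\kappa\theta/2}$ term tracks the analogous interpolation applied to the deterministic-mismatch contributions.
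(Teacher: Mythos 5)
Your overall architecture (mild formulation, comparison of the semigroups $S^n$ and $S$ via \autoref{convergence_operators}, Gr\"onwall on the $\Lambda^n_\rho$ convolution, separate treatment of the stochastic convolution, Jensen for $\kappa<2$) matches the paper's scheme, except that you skip the intermediate deterministic field $\widehat{B^n}$ of \eqref{intermediate systems B}, which is only an organisational difference. The genuine gap is in the one step you yourself identify as the crux: the smallness of the stochastic convolution in $\mathbf{H}^{-1-\theta}$. Your mechanism is that $\mathcal{L}_{\skj}B^n_s$ is ``a frequency shift by $k$'' and therefore gains a factor $n^{-2(1+\theta)}$ in the $\mathbf{H}^{-1-\theta}$ norm. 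This is false in general: multiplication by $e^{ik\cdot x}$ shifts the \emph{whole} spectrum of $B^n_s$, so the modes of $B^n_s$ located near $-k$ (i.e.\ at frequency of order $n$, over which you have no control beyond $L^2_t\mathbf{L}^2$) are moved down to $O(1)$ frequencies, where the negative Sobolev weight gives no gain at all. The paper's \autoref{lemma:coefficients} is exactly the quantification of this obstruction: after splitting the convolution sum over $\lvert h+k\rvert\le \lvert h\rvert/2$ etc., one only gets a \emph{uniform} bound $\sumkj\lVert P[Q[\skj\times B^n_t]]\rVert^2_{\mathbf{H}^{-1-\delta/2}}\lesssim\lVert B^n_0\rVert^2_{\mathbf{H}^{-1}}$, with no power of $n$. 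The actual smallness is harvested in a much weaker norm ($\mathbf{H}^{-4-\frac{3}{2}\delta}$ in \autoref{convergence_stocastic_conv}), where one tests against smooth $\psi$, uses Parseval and only the size $\lvert\tkj\rvert^2\lesssim n^{-(\alpha\wedge\beta\wedge\gamma)}$ of the coefficients, and then the rate in $\mathbf{H}^{-1-\theta}$ is recovered by interpolating these two bounds (\autoref{cor_stoch_conv}); that interpolation is precisely the origin of the exponent $(\theta-\delta)/(6+\delta)$, which your ``Kolmogorov-style trade-off'' does not reproduce from any stated estimate.

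A second, related error: you propose to interpolate ``the uniform $L^\infty_t\mathbf{L}^2$ and $L^2_t\mathbf{H}^1$ a priori bounds supplied by \autoref{section a priori estimates}''. No such bounds exist for the full field $B^n$: \autoref{prop:compact_space} only gives $L^\infty_t\mathbf{H}^{-1}\cap L^2_t\mathbf{L}^2$ (the stronger $L^\infty_t L^2\cap L^2_t H^1$ bound holds only for the scalar component $B^{n,3}$ under \autoref{HP noise 2}), and the impossibility of closing an $L^2$-level energy estimate in the presence of the stochastic stretching is exactly the 3D difficulty the paper is built around. Since both the frequency-shift gain and the stronger a priori bounds are unavailable, your estimate of the martingale term does not go through as written, and with it the rates in \eqref{main thm ineq 1} and \eqref{main thm ineq 2} are unsubstantiated; the rest of your outline (deterministic semigroup mismatches, initial-data term, the coupling of $B^{n,3}$ to $B^{n,H}$ closed by the first estimate with the $(\vartheta-\theta)^{-\kappa}$ loss) is consistent with the paper's argument.
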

\begin{remark}
    The hidden constant depends on all the parameters of the model and blows-up as $\delta\rightarrow0$ or if either $\eta_{iso}\nearrow \eta$ in case of \autoref{HP noise isotropo} or $2\eta_T\nearrow \eta$ in case of \autoref{HP noise 1} and $\alpha=2.$ 
\end{remark}
\begin{remark}\label{dimension turbulent viscosity}
Due to \autoref{HP noise isotropo} $\eta_{iso}<\eta$. In case of \autoref{HP noise 1} with $\alpha=2$ we have $\eta_T<\frac{\eta}{2}$.   
\end{remark}
Contrary to our previous work \cite{butori2024stratonovich}, here also a compactness approach inspired to \cite[Section 5]{flandoli2023boussinesq} works. Indeed, arguing similarly to \autoref{a_priori_bound_stocastic_conv}, it is possible to study the behaviour of the time increments of $B^n_t$ in some Sobolev spaces of negative order. Combining this control and the uniform estimates of \autoref{prop:compact_space} allows to prove the tightness of the laws of $\{B^n\}_{n\in \N}$ in $L^2(0,T;H^{-\delta})$ for each $\delta>0$. Then, arguing by Prokhorov's theorem and Skorokhod's representation theorem, it is possible to find an auxiliary probability space where, up to passing to subsequences, $B^n\rightarrow \overline{B}$ in $L^2(0,T;H^{-\delta})\quad \mathbb{P}-a.s.$ Arguing similarly to \autoref{sec proof main thm}, we can identify $\overline{B}$ as the unique weak solution of \eqref{limit solution A3}. This implies that actually the full sequence $B^n\rightarrow \overline{B}$ in $L^2(0,T;H^{-\delta})$ in probability. Similar arguments applies also to $B^{n,3}$ in case of \autoref{HP noise 2}. We preferred to rely on semigroup techniques and state our main result in the form of \autoref{main Theorem} for a twofold reason. First, this quantitative result gives explicit information on the rate of convergence which we think are of independent interest. Secondly, the convergence in $C([0,T];L^2(\Omega;\mathbf{H}^{-1-\theta}))$ guaranteed by \autoref{main Theorem} gives us more information about the qualitative behaviour of the stochastic model. Indeed, as a corollary of \autoref{main Theorem} and its proof, in particular \autoref{prop:compact_space}, we can easily obtain \autoref{dynamo_effect} and \autoref{beta_effect}.
\begin{corollary}\label{dynamo_effect}
    Let $k=\lambda e_3,$ for $\lambda \in \Z_0$ and $B_0^n=\overline{B_0}=(\sin(k\cdot x), \cos(k\cdot x),0)^t$. Assuming either \autoref{HP noise 1} or \autoref{HP noise 2}, if $\lambda\rho>0$ and $ C_{2,H}<
        \frac{2\pi\rho\log 2}{\lambda C_{1,H}{{\eta}}},$
for $n$ large enough, $\expt{\norm{B^n_t}^2}$ grows exponentially in time for each $t\geq 0$. In particular
\begin{align}\label{mean_dynamo}
    \expt{\norm{B^n_t}^2}\geq e^{-(\lambda (\mathcal{H}+O(n^{-1}))  +2(\eta+O(n^{2-\beta}+n^{3-\gamma})\lambda^2)t}\norm{\overline{B}_0}^2\quad \forall t\geq 0.
\end{align}  
Moreover, for each $\eps,\eps', T> 0$, there exists $\overline{n}\in \N$ large enough such that for each $n\geq \overline{n}$ there exists $\mathcal{N}_n\subseteq \Omega$ which satisfies $\mathbb{P}(\mathcal{N}_n)\leq \eps'$ and \begin{align}\label{pathwise_dynamo}
    \norm{B^n_t}\geq e^{-(\lambda  \frac{\mathcal{H}^n}{2}+(\eta+\eta^{n}_R+\eta^n_V-\eta^n_{R,V} )\lambda^2)t}(1-\epsilon)\|\overline{B}_0\| \quad \forall t\in [0,T],\ \forall\omega\in \mathcal{N}_n^c.
\end{align}
\end{corollary}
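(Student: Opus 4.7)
The plan is: compute the limit trajectory $\overline{B_t}$ explicitly, derive \eqref{mean_dynamo} from Jensen applied to $\mathbb{E}[B^n_t]$, and obtain \eqref{pathwise_dynamo} from duality together with a uniform-in-time upgrade of \autoref{main Theorem}. Since $\overline{B_0}(x)=(\sin(\lambda x_3),\cos(\lambda x_3),0)^t$ depends only on $x_3$, one has $\Delta_H\overline{B_0}=0$, hence $\Lambda_{\alpha,\beta,\gamma}\overline{B_0}=0$ under either \autoref{HP noise 1} or \autoref{HP noise 2}. Direct computation yields $\Delta\overline{B_0}=-\lambda^2\overline{B_0}$ and $\nabla\times(\mathcal{A}_\rho\overline{B_0})=\tfrac{2\pi\rho\log 2}{C_{1,H}C_{2,H}}\lambda\overline{B_0}=-\tfrac{\lambda\mathcal{H}}{2}\overline{B_0}$, so $\overline{B_0}$ is an eigenfunction of the generator of \eqref{limit solution A3} with eigenvalue $\mu:=-\eta\lambda^2-\lambda\mathcal{H}/2$, whence $\overline{B_t}=e^{\mu t}\overline{B_0}$; the hypothesis $\lambda\rho>0$ and $C_{2,H}<\tfrac{2\pi\rho\log 2}{\lambda C_{1,H}\eta}$ ensures $\mu>0$.

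To prove \eqref{mean_dynamo}, take expectations in \eqref{Introductory equation Ito}: the It\^o integrals vanish, so $\mathbb{E}[B^n_t]$ solves $\partial_t f=(\eta\Delta+\Lambda^n+\Lambda^n_\rho)f$ with $f_0=\overline{B_0}$. An analogous eigenvector computation using \autoref{form covariance matrix non-isotropic} and \autoref{lemma ito strat corrector} shows $(\eta\Delta+\Lambda^n+\Lambda^n_\rho)\overline{B_0}=\mu^n\overline{B_0}$ with $\mu^n:=-\eta\lambda^2-(\eta^n_R+\eta^n_V-\eta^n_{R,V})\lambda^2-\tfrac{\lambda\mathcal{H}^n}{2}$, so $\mathbb{E}[B^n_t]=e^{\mu^n t}\overline{B_0}$. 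Jensen's inequality then yields $\mathbb{E}\|B^n_t\|^2\geq \|\mathbb{E}[B^n_t]\|^2=e^{2\mu^n t}\|\overline{B_0}\|^2$, which is precisely \eqref{mean_dynamo} after plugging in $\mathcal{H}^n=\mathcal{H}+O(n^{-1})$, $\eta^n_R=O(n^{2-\beta})$, $\eta^n_V=O(n^{3-\gamma})$, $\eta^n_{R,V}=O(n^{2-\gamma})$ from \autoref{form covariance matrix non-isotropic}.

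For \eqref{pathwise_dynamo}, set $\psi:=\overline{B_0}/\|\overline{B_0}\|$; Cauchy-Schwarz gives $\|B^n_t\|\geq\langle B^n_t,\psi\rangle$, and decomposing $\langle B^n_t,\psi\rangle = e^{\mu t}\|\overline{B_0}\| + \langle B^n_t-\overline{B_t},\psi\rangle$ with $|\langle B^n_t-\overline{B_t},\psi\rangle|\leq \|B^n_t-\overline{B_t}\|_{\mathbf{H}^{-1-\theta}}\|\psi\|_{\mathbf{H}^{1+\theta}}$ (and $\|\psi\|_{\mathbf{H}^{1+\theta}}\lesssim_\lambda 1$) reduces \eqref{pathwise_dynamo} to finding, for any $\epsilon,\epsilon'>0$ and $n$ large, an event $\mathcal{N}_n$ with $\mathbb{P}(\mathcal{N}_n)\leq\epsilon'$ on whose complement $\sup_{t\in[0,T]}\|B^n_t-\overline{B_t}\|_{\mathbf{H}^{-1-\theta}}\leq\delta$ for $\delta$ small enough. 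Combined with the uniform comparison $e^{\mu t}\geq (1-C_T|\mu-\mu^n|)e^{\mu^n t}$ on $[0,T]$ (valid since $\mu^n\to\mu$), this yields \eqref{pathwise_dynamo}, possibly after slightly enlarging $\epsilon$. The main obstacle is upgrading the pointwise-in-$t$ convergence of \autoref{main Theorem} to uniform-in-$t$ convergence in probability: as sketched in the paragraph preceding the corollary, combining the uniform space bounds of \autoref{prop:compact_space} with time-regularity estimates for $B^n_t$ in some negative-order Sobolev norm (argued as in \autoref{a_priori_bound_stocastic_conv}) yields tightness of $\{B^n\}$ in $C([0,T];\mathbf{H}^{-s})$ for suitable $s>1+\theta$; Prokhorov plus identification of the limit via \autoref{main Theorem} then gives convergence in probability in $C([0,T];\mathbf{H}^{-1-\theta})$, and Markov's inequality supplies the required uniform control.
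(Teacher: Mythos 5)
Your treatment of \eqref{mean_dynamo} is correct and is essentially the paper's argument in a different guise: the paper works with the scalar observable $b^n_{\lambda,t}=\langle B^n_t,\overline{B}_0\rangle$, notes that $\overline{B}_0$ is a Beltrami eigenfunction of the (adjoint of the) drift $\eta\Delta+\Lambda^n+\Lambda^n_\rho$ with eigenvalue $\mu^n$, computes $\expt{b^n_{\lambda,t}}=e^{\mu^n t}\norm{\overline{B}_0}^2$, and concludes by Cauchy--Schwarz/Jensen; your computation of $\expt{B^n_t}=e^{\mu^n t}\overline{B}_0$ followed by Jensen is the same mechanism.

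For \eqref{pathwise_dynamo}, however, there is a genuine gap. Your reduction requires an event of probability $\geq 1-\eps'$ on which $\sup_{t\in[0,T]}\norm{B^n_t-\overline{B_t}}_{\mathbf{H}^{-1-\theta}}$ is small, i.e.\ convergence in probability \emph{uniformly in time}. \autoref{main Theorem} only gives $\sup_{t}\expt{\norm{B^n_t-\overline{B_t}}^\kappa_{\mathbf{H}^{-1-\theta}}}\to 0$ (the supremum is outside the expectation), which via Markov yields convergence in probability at each fixed $t$, uniformly in $t$ in the \emph{rate}, but says nothing about $\mathbb{P}(\sup_t\norm{B^n_t-\overline{B_t}}_{\mathbf{H}^{-1-\theta}}>\delta)$. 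The compactness route you invoke is also not available as stated: the discussion preceding the corollary only claims tightness of the laws in $L^2(0,T;H^{-\delta})$, and an $L^2$-in-time limit cannot control a pointwise-in-time infimum of $\norm{B^n_t}$; tightness in $C([0,T];\mathbf{H}^{-s})$ would require additional (unproven) modulus-of-continuity estimates with sufficiently high moments, plus a Gy\"ongy--Krylov type argument to return from the Skorokhod space to convergence in probability on the original space. None of this is needed: observe that your own reduction only requires smallness of $\sup_t|\langle B^n_t-\overline{B_t},\psi\rangle|$ for the single fixed smooth $\psi=\overline{B}_0/\norm{\overline{B}_0}$. This is exactly what the paper exploits: writing the SDE for $b^n_{\lambda,t}$ and solving it by Duhamel, the deviation from the exponential $e^{\mu^n t}\norm{\overline{B}_0}^2$ is a stochastic integral whose quadratic variation carries the factor $\sumkj\lvert\tkj\rvert^2\lesssim n^{-\alpha\wedge\beta\wedge\gamma}$ (since $\nabla\times\overline{B}_0=\lambda\overline{B}_0$ is fixed and bounded), so Burkholder--Davis--Gundy, Markov and the energy bound $\expt{\int_0^T\norm{B^n_s}^2ds}\lesssim 1$ from \autoref{prop:compact_space} give a supremum-in-time bound on the fluctuation directly, with probability $1-\eps'$ for $n$ large; the lower bound $\norm{B^n_t}\geq \lvert b^n_{\lambda,t}\rvert/\norm{\overline{B}_0}$ then concludes. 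As written, your appeal to uniform-in-time convergence is the missing step, and replacing it by this scalar martingale estimate (note also that no comparison with the limit trajectory $\overline{B_t}$ is needed, which spares you the $\mu$ versus $\mu^n$ bookkeeping) is what closes the proof.
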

\begin{proof}
    Let us denote by $b^n_{\lambda,t}:=\langle B_t^n,\overline{B}_0\rangle$. Due to the weak formulation satisfied by $B^n_t$ we obtain an equation for the evolution of $b^n_{\lambda,t}$:
\begin{align}\label{SDE}
    db^n_{\lambda,t} = \sumkj\langle B_t^n, ( \nabla\times \overline{B}_0)\times \skj \rangle dW_t^k + \langle \Lambda^n_\rho B_t^n, \overline{B}_0\rangle dt - (\eta+\eta^{n}_R+\eta^n_V-\eta^n_{R,V} )\lambda^2 b_{\lambda,t}^ndt
\end{align}
Now, integrating by parts we can rewrite the term with $\Lambda_\rho^n$ as 
\begin{align*}
    \langle \Lambda^n_\rho B_t^n, \overline{B}_0\rangle = -\lambda  \frac{\mathcal{H}^n} {2} b_{\lambda, t}^n
\end{align*}
Taking the expected value we immediately get 
\begin{align*}
    \expt{b_{\lambda,t}^n}= e^{-(\lambda  \frac{\mathcal{H}^n}{2}+(\eta+\eta^{n}_R+\eta^n_V-\eta^n_{R,V} )\lambda^2)t}\|\overline{B}_0\|^2.
\end{align*}
Under our assumptions on $\lambda,\ \rho, \ C_{1,H},\ C_{1,H}, \ \eta$, if $n$ is sufficiently large then
\begin{align*}
   \lambda  \frac{\mathcal{H}^n}{2}+(\eta+\eta^{n}_R+\eta^n_V-\eta^n_{R,V} )\lambda^2=\lambda (\mathcal{H}+O(n^{-1}))  +2(\eta+O(n^{2-\beta}+n^{3-\gamma})\lambda^2<0
\end{align*}
and we see exponential growth of $\expt{b_{\lambda,t}^n}$. In particular, by Jensen's inequality, the latter implies 
\begin{align*}
   \expt{\|B_t^n\|^2}\geq \frac{\expt{\lvert b^n_{\lambda,t}\rvert^2}}{\norm{\overline{B}_0}^2}\geq e^{-(\lambda (\mathcal{H}+O(n^{-1}))  +2(\eta+O(n^{2-\beta}+n^{3-\gamma}))\lambda^2)t}\norm{\overline{B}_0}^2.
\end{align*}
Concerning the second statement, due to \eqref{SDE}, $b^{n}_{\lambda,t}$ can be written as 
\begin{align*}
    b^{n}_{\lambda,t}&=e^{-(\lambda  \frac{\mathcal{H}^n}{2}+(\eta+\eta^{n}_R+\eta^n_V-\eta^n_{R,V} )\lambda^2)t}\norm{\overline{B}_0}^2\\ & +\sumkj\int_0^t e^{-(\lambda  \frac{\mathcal{H}^n}{2}+(\eta+\eta^{n}_R+\eta^n_V-\eta^n_{R,V} )\lambda^2)(t-s)}\langle B_s^n, ( \nabla\times \overline{B}_0)\times \skj \rangle dW_s^k.
\end{align*}
In particular
\begin{multline*}
    \inf_{t\in [0,T]}e^{(\lambda  \frac{\mathcal{H}^n}{2}+(\eta+\eta^{n}_R+\eta^n_V-\eta^n_{R,V} )\lambda^2)t}b^{n}_{\lambda,t}\\ \geq \norm{\overline{B}_0}^2-\sup_{t\in [0,T]}\lvert \sumkj\int_0^t e^{(\lambda  \frac{\mathcal{H}^n}{2}+(\eta+\eta^{n}_R+\eta^n_V-\eta^n_{R,V} )\lambda^2)s}\langle B_s^n, ( \nabla\times \overline{B}_0)\times \skj \rangle dW_s^k\rvert
\end{multline*}
and by Markov and Burkholder-Davis-Gundy inequality
\begin{multline*}
    \mathbb{P}(\inf_{t\in [0,T]}e^{(\lambda  \frac{\mathcal{H}^n}{2}+(\eta+\eta^{n}_R+\eta^n_V-\eta^n_{R,V} )\lambda^2)t}b^{n}_{\lambda,t}\geq (1-\epsilon)\norm{\overline{B}_0}^2)\\ \geq  
    \mathbb{P}(\sup_{t\in [0,T]}\lvert \sumkj\int_0^t e^{(\lambda  \frac{\mathcal{H}^n}{2}+(\eta+\eta^{n}_R+\eta^n_V-\eta^n_{R,V} )\lambda^2)s}\langle B_s^n, ( \nabla\times \overline{B}_0)\times \skj \rangle dW_s^k\rvert\leq   \epsilon\norm{\overline{B}_0}^2)\\  =1-\mathbb{P}(\sup_{t\in [0,T]}\lvert \sumkj\int_0^t e^{(\lambda  \frac{\mathcal{H}^n}{2}+(\eta+\eta^{n}_R+\eta^n_V-\eta^n_{R,V} )\lambda^2)s}\langle B_s^n, ( \nabla\times \overline{B}_0)\times \skj \rangle dW_s^k\rvert>   \epsilon\norm{\overline{B}_0}^2)\\ \geq 1-\frac{2}{\epsilon^2 \norm{\overline{B}_0}^4}\sumkj\expt{\int_0^T e^{2(\lambda  \frac{\mathcal{H}^n}{2}+(\eta+\eta^{n}_R+\eta^n_V-\eta^n_{R,V} )\lambda^2)s}\langle B_s^n, ( \nabla\times \overline{B}_0)\times \skj \rangle^2 ds}.
\end{multline*}
We are left on estimating the last integral, but due to \autoref{prop:compact_space} and our choice of $\skj$ we have
\begin{align*}
    \sumkj\expt{\int_0^T e^{2(\lambda  \frac{\mathcal{H}^n}{2}+(\eta+\eta^{n}_R+\eta^n_V-\eta^n_{R,V} )\lambda^2)s}\langle B_s^n, ( \nabla\times \overline{B}_0)\times \skj \rangle^2 ds}&\leq \frac{\lambda^2\norm{\overline{B}_0}_{L^{\infty}}^2}{n^{\alpha\wedge\beta\wedge\gamma}}\expt{\int_0^T \norm{B^n_t}^2 dt}\\& \lesssim \frac{\lambda^2\norm{\overline{B}_0}^2\norm{\overline{B}_0}_{L^{\infty}}^2}{n^{\alpha\wedge\beta\wedge\gamma}}.
\end{align*}
As a consequence, choosing $n$ large enough
\begin{align*}
    \mathbb{P}(\inf_{t\in [0,T]}e^{(\lambda  \frac{\mathcal{H}^n}{2}+(\eta+\eta^{n}_R+\eta^n_V-\eta^n_{R,V} )\lambda^2)t}b^{n}_{\lambda,t}\geq (1-\epsilon)\norm{\overline{B}_0}^2)\geq 1-\eps'
\end{align*}
and the result follows.
\end{proof}
\begin{corollary}\label{beta_effect}
If \autoref{HP noise isotropo} holds and $B_0^n=\overline{B_0}\in \mathbf{L^2}$, then for each $\kappa\in [1,2),\ \theta\in (0,2),\ \delta\in (0,\theta)$ we have
\begin{align*}
    \expt{\norm{B^n_t}_{\mathbf{H^{-1-\theta}}}^\kappa}\leq \frac{C}{n^{\frac{3\kappa(\theta-\delta)}{6+\delta}}}+2^{\kappa-1}\norm{\overline{B_0}}^\kappa e^{-\kappa\left(\eta+\frac{(8-6\lvert \rho\rvert)\pi\log 2}{3C_V^2}\right)t}\quad \forall t\in [0,T],
\end{align*}
where $C$ is a constant depending on $\overline{B_0},C_V,\rho,\theta,\delta,\kappa, T$.
In particular, for each $\epsilon,\eps'>0$ there exists $\overline{n}\in \N$ large enough such that for each $t\in [0,T], \ n\geq \overline{n}$ 
\begin{align*}
    \mathbb{P}\left(\norm{B^n_t}_{\mathbf{H^{-1-\theta}}}\geq (1+\epsilon)\norm{\overline{B}_0}e^{-\left(\eta+\frac{(8-6\lvert \rho\rvert)\pi\log 2}{3C_V^2}\right)t}\right)\leq \eps'.
\end{align*}
\end{corollary}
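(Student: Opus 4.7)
The plan is to combine the quantitative $\mathbf{H}^{-1-\theta}$ convergence $B^n_t\to \overline{B_t}$ provided by \autoref{main Theorem} with an exponential $\mathbf{L}^{2}$ decay for the deterministic limit $\overline{B_t}$, and then to pass from the mean bound to the probability bound by Markov's inequality.

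First I would derive the $\mathbf{L}^{2}$ decay of the limit solution $\overline{B_t}$, which under \autoref{HP noise isotropo} solves
\begin{align*}
\partial_t \overline{B_t}=(\eta+\eta_{iso})\Delta \overline{B_t}+\nabla\times(\mathcal{A}_{\rho}\overline{B_t}),\qquad \operatorname{div}\overline{B_t}=0.
\end{align*}
Testing the equation against $\overline{B_t}$ and integrating by parts gives
\begin{align*}
\tfrac{1}{2}\tfrac{d}{dt}\norm{\overline{B_t}}^{2}=-(\eta+\eta_{iso})\norm{\nabla \overline{B_t}}^{2}+\langle \mathcal{A}_{\rho}\overline{B_t},\nabla\times \overline{B_t}\rangle.
\end{align*}
Since $\mathcal{A}_{\rho}$ is multiplication by a matrix of operator norm $\tfrac{2\pi\lvert\rho\rvert\log 2}{C_V^{2}}$, since $\norm{\nabla\times v}=\norm{\nabla v}$ for divergence-free $v$ on $\T^{3}$, and since Poincaré on $\Dot L^{2}(\T^{3};\R^{3})$ has constant one, Cauchy-Schwarz followed by two applications of Poincaré yields
\begin{align*}
\tfrac{d}{dt}\norm{\overline{B_t}}^{2}\leq -2\pa{\eta+\tfrac{(8-6\lvert\rho\rvert)\pi\log 2}{3C_V^{2}}}\norm{\overline{B_t}}^{2}.
\end{align*}
The coefficient on the right is strictly positive (since $\lvert\rho\rvert\leq 1$ and $\eta_{iso}=\tfrac{8\pi\log 2}{3C_V^{2}}$), and Gr\"onwall followed by the embedding $\mathbf{L}^{2}\hookrightarrow \mathbf{H}^{-1-\theta}$ gives
\begin{align*}
\norm{\overline{B_t}}_{\mathbf{H}^{-1-\theta}}\leq \norm{\overline{B_0}}\,e^{-(\eta+\tfrac{(8-6\lvert\rho\rvert)\pi\log 2}{3C_V^{2}})t}.
\end{align*}

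Second, applying \autoref{main Theorem} under \autoref{HP noise isotropo} with $B^{n}_{0}=\overline{B_0}$ (so that the initial-data remainder $\norm{\overline{B_0}-B^n_0}_{\mathbf{H}^{-1-\theta}}$ vanishes) yields
\begin{align*}
\sup_{t\in[0,T]}\expt{\norm{B^n_t-\overline{B_t}}_{\mathbf{H}^{-1-\theta}}^{\kappa}}\lesssim n^{-3\kappa(\theta-\delta)/(6+\delta)}.
\end{align*}
The triangle inequality together with $(a+b)^{\kappa}\leq 2^{\kappa-1}(a^{\kappa}+b^{\kappa})$ then produces the first displayed estimate of the corollary, after absorbing the factor $2^{\kappa-1}$ coming from the stochastic error into the constant $C$. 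For the probability statement, set $r=\eta+\tfrac{(8-6\lvert\rho\rvert)\pi\log 2}{3C_V^{2}}$; if $\norm{B^n_t}_{\mathbf{H}^{-1-\theta}}\geq(1+\epsilon)\norm{\overline{B_0}}e^{-rt}$ then by the triangle inequality and the bound above on $\overline{B_t}$ one must have $\norm{B^n_t-\overline{B_t}}_{\mathbf{H}^{-1-\theta}}\geq \epsilon\norm{\overline{B_0}}e^{-rt}\geq \epsilon\norm{\overline{B_0}}e^{-rT}$, and Markov's inequality bounds this event's probability by $Ce^{\kappa rT}/(\epsilon^{\kappa}\norm{\overline{B_0}}^{\kappa}n^{3\kappa(\theta-\delta)/(6+\delta)})$, which for $n$ large is $\leq \eps'$ uniformly in $t\in[0,T]$.

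The main (and really only) delicate step is the energy estimate for the limit equation: one has to combine Cauchy-Schwarz with the identity $\norm{\nabla\times v}=\norm{\nabla v}$ and Poincaré's inequality on $\T^{3}$ in exactly the right way to reproduce the exponential rate $\eta+\tfrac{(8-6\lvert\rho\rvert)\pi\log 2}{3C_V^{2}}$ appearing in the statement, and to check that this rate is indeed positive under \autoref{HP noise isotropo} and $\lvert\rho\rvert\leq 1$; the remaining steps are a direct application of \autoref{main Theorem} and of Markov's inequality.
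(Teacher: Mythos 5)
Your proposal is correct and follows essentially the same route as the paper: an energy estimate on the limit equation (Cauchy--Schwarz on the $\mathcal{A}_\rho$ term, the curl/gradient identity, Poincaré, then Grönwall) giving the exponential decay at rate $\eta+\tfrac{(8-6\lvert\rho\rvert)\pi\log 2}{3C_V^2}$, followed by the triangle inequality with the quantitative bound of \autoref{main Theorem} (the initial-data terms vanishing since $B^n_0=\overline{B_0}$), and Markov's inequality for the probability statement. No gaps to report.
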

\begin{proof}
    Thanks to simple energy estimates on \eqref{limit solution A3} and Poincaré inequality we obtain
    \begin{align*}
\norm{\overline{B_t}}^2+2\left(\eta+\frac{8\pi\log 2}{3C_V^2}\right)\int_0^t \norm{\nabla \overline{B_s}}^2 ds&\leq \norm{\overline{B_0}}^2+\frac{4\pi\lvert \rho\rvert \log 2}{C_V^2}\int_0^t \norm{\nabla \overline{B_s}}^2 ds.
    \end{align*}
Therefore, by Gr\"onwall's inequality
\begin{align}\label{a_priori_estimate_enhanced}
\norm{\overline{B_t}}^\kappa\leq \norm{\overline{B_0}}^\kappa e^{-\kappa\left(\eta+\frac{(8-6\lvert \rho\rvert)\pi\log 2}{3C_V^2}\right)t}.    
\end{align}
As a consequence of the inequality above and \autoref{main Theorem} we obtain
\begin{align*}
    \expt{\norm{B^n_t}_{\mathbf{H}^{-1-\theta}}^\kappa}&\leq 2^{\kappa-1}\left(\expt{\norm{B^n_t-\overline{B_t}}_{\mathbf{H}^{-1-\theta}}^\kappa}+\norm{\overline{B_t}}_{\mathbf{H}^{-1-\theta}}^\kappa\right)\\ & \leq \frac{C_{T,B_0,C_V,\rho,\theta,\delta,\kappa }}{n^{\frac{3\kappa(\theta-\delta)}{6+\delta}}}+2^{\kappa-1}\norm{\overline{B_0}}^\kappa e^{-\kappa\left(\eta+\frac{(8-6\lvert \rho\rvert)\pi\log 2}{3C_V^2}\right)t}.
\end{align*}
Concerning the second result, since by triangle inequality, Sobolev embedding and \eqref{a_priori_estimate_enhanced}
\begin{multline*}
    \mathbb{P}(\norm{B^n_t}_{\mathbf{H}^{-1-\theta}}e^{\left(\eta+\frac{(8-6\lvert \rho\rvert)\pi\log 2}{3C_V^2}\right)t}\geq (1+\epsilon)\norm{\overline{B}_0})\\ \leq \mathbb{P}(\norm{\overline{B}_t}e^{\left(\eta+\frac{(8-6\lvert \rho\rvert)\pi\log 2}{3C_V^2}\right)t}+\norm{B^n_t-\overline{B}_t}_{\mathbf{H}^{-1-\theta}}e^{\left(\eta+\frac{(8-6\lvert \rho\rvert)\pi\log 2}{3C_V^2}\right)t}\geq (1+\epsilon)\norm{\overline{B}_0})\\  \leq 
    \mathbb{P}(\norm{B^n_t-\overline{B}_t}_{\mathbf{H}^{-1-\theta}}e^{\left(\eta+\frac{(8-6\lvert \rho\rvert)\pi\log 2}{3C_V^2}\right)t}\geq \epsilon\norm{\overline{B}_0})\leq \frac{\expt{\norm{B^n_t-\overline{B}_t}_{\mathbf{H}^{-1-\theta}}^{\kappa}}}{\eps^\kappa e^{-\kappa\left(\eta+\frac{(8-6\lvert \rho\rvert)\pi\log 2}{3C_V^2}\right)T}\norm{\overline{B}_0}^\kappa},
\end{multline*}
where in the last step we applied Markov inequality. The claim then follows by \autoref{main Theorem}.
\end{proof}
The two corollaries above have a clear interpretation in terms of the discussion of \autoref{sec introduction} and \autoref{subsect description noise}. Indeed, \autoref{dynamo_effect} shows that if the helicity of our velocity field is large enough and the turbulent fluid is sufficiently anisotropic, the energy of our stochastic models grows exponentially in time. Therefore, a dynamo effect is active on our stochastic models in case of \autoref{HP noise 1}. On the contrary, in case of \autoref{HP noise isotropo}, due to \autoref{beta_effect}, the dissipation acting on magnetic field increases due to the presence of the turbulent fluid. This is accordance to \cite{krause2016mean}. Moreover, \autoref{beta_effect} implies that it is not needed that the fluid is either completely isotropic or mirror symmetric in order to increase the dissipative effects acting on the magnetic field.  Let us stress more the links between \autoref{dynamo_effect} and the theory of dynamo for the passive magnetic field. Relation \eqref{mean_dynamo} implies
    \begin{align*}
\limsup_{t\rightarrow+\infty}\frac{\log\expt{\norm{B^n_t}^2}}{t}>0.
    \end{align*}
    The quantity above corresponds with the definition of kinematic dynamo for stochastic models discussed in \cite[Chapter 11]{childress2008stretch}. However, even if this definition of kinematic dynamo is the most tractable mathematically for stochastic problems, it is not the only definition, and perhaps not the best definition. Indeed, it does not provide any information about the typical realizations of our process as discussed in \cite[Section 11.5]{childress2008stretch}, this is the content of our second pathwise statement \eqref{pathwise_dynamo}. The latter even if restricted to finite, arbitrary long, time intervals says exactly that typical trajectories of the magnetic field  (and not only their mean) follow exponential growth. The restriction of \eqref{pathwise_dynamo} to finite time intervals is a minor drawback of our result. Indeed, the equation for the passive magnetic field is only an approximation of the nonlinear equations of Magnetohydrodynamics becoming not valid anymore as the magnetic field becomes to huge and its effect of the velocity of the fluid cannot be neglected, see for example \cite[Chapter 1-7]{zheligovsky2011large}, \cite{lanotte1999large}. As discussed in \cite[Section 1.2.4]{childress2008stretch}, \emph{the existence of a kinematic dynamo ensures that a non-magnetic state is unstable. The instability is excited by introducing a weak seed magnetic field, far too feeble to affect the flow. The growth of the seed field will then ensue, until the magnetic stresses build up to a point where the flow is affected and a full Magnetohydrodynamics theory is required to determine the subsequent evolution.} Relation \eqref{pathwise_dynamo}, exactly says that with arbitrarily large probability, we reach, tuning the parameters properly also in arbitrarily small time, a turning point where full Magnetohydrodynamics theory is required to determine the subsequent evolution.
\begin{remark}
    Contrary to \autoref{HP noise isotropo}, the matrix $Q^n(0)$ converge to a degenerate matrix as $n\rightarrow +\infty$ in case of \autoref{HP noise 1}. This phenomena produces a degenerate second order operator in the scaling limit. It is possible to avoid such a phenomena in case of \autoref{HP noise 1} and $\alpha=2$ considering a more general noise
    \begin{align*}
        W^n_t=\frac{1}{3}\sumkj a_{k,j}e^{ik\cdot x}\left(\theta^n_{1,k,j} W^{1,k,j}_t+\theta^n_{2,k,j} W^{2,k,j}_t+\theta^n_{3,k,j} W^{3,k,j}_t\right),
    \end{align*}
    where $W^{h,k,j}_t$ are complex Brownian motions satisfying 
    \begin{align*}
    \expt{W^{h,k,j}_1,{W^{h',l,m}_1}^*}&=\begin{cases}0 &\textit{if } h\neq h'\\
        2 & \textit{if } h=h',\ k=l, \ m=j\\
        2\rho_h &  \textit{if } h=h',\  k=l,\ k_h=0,\ m\neq j
    \end{cases}
    \end{align*}
    and \begin{align*}
   \theta_{h,k,j}^n &=\one_{\{n\leq \lvert k\rvert\leq 2n\} }\begin{cases}
        \frac{i}{C_{1,H}\lvert k\rvert^{\alpha/2}}& \text{if } k_h=0, k\in\Gamma_{+},\ j=1\\
         \frac{-i}{C_{1,H}\lvert k\rvert^{\alpha/2}}& \text{if } k_h=0, k\in\Gamma_{-},\ j=1\\
        \frac{1}{C_{2,H}\lvert k\rvert^{\beta/2}}& \text{if } k_h=0,\ j=2\\
        \frac{1}{C_{V}\lvert k\rvert^{\gamma/2}} & \text{if } k_h\neq 0.
    \end{cases}
\end{align*}
In this case we get 
\begin{align*}
    \Lambda_{\alpha,\beta,\gamma}=\frac{2}{3}\eta_T \Delta,\quad \mathcal{A}_{\rho_1,\rho_2,\rho_3}=\frac{2\pi\log 2}{3C_{1,H}C_{2,H}}\left(\rho_1\begin{bmatrix} 
   0 & 0 & 0 \\ 0 & 1 & 0\\ 0 & 0 & 1
   \end{bmatrix}+\rho_2\begin{bmatrix} 
   1 & 0 & 0 \\ 0 & 0 & 0\\ 0 & 0 & 1
   \end{bmatrix}+\rho_3\begin{bmatrix} 
   1 & 0 & 0 \\ 0 & 1 & 0\\ 0 & 0 & 0
   \end{bmatrix}\right)
\end{align*}
and we can prove an analogous result to \autoref{main Theorem}. Since this change would drastically complicate the notation without any change either in the proof of \autoref{main Theorem} and in the physical understanding of the kinematic dynamo and the dissipative properties of our stochastic systems, i.e. \autoref{dynamo_effect} and \autoref{beta_effect}, we prefer to limit ourselves to the noise introduced in \autoref{subsect description noise}. In this spirit the \emph{helical} regime could be regarded as a stochastic perturbation of a Robert's flow \cite{roberts1972dynamo} in which we can also allow dependence on the third space variable. This kind of flows are extensively studied in the physical and numerical literature as paradigmatic models for dynamo action (see for instance \cite{Brandenburg2003}, \cite{Brandenburg2008}, \cite{courvoisier2005dynamo} and references therein) 
\end{remark}

\section{A Priori Estimates}\label{section a priori estimates}
The goal of this section is to provide uniform bounds for the $\mathbf{H}^{-1}$ norm of our vector fields $B^n_t$. In order to complete our plan it is useful to recall a well-known result about equivalence of norms for divergence free vector fields, see for example \cite{Marchioro1994}.
\begin{proposition}\label{equivalence norms}
Let $X\in \mathbf{L}^2,\ Y\in \mathbf{H}^1$. Then 
\begin{align}\label{property 1 norm}
    \norm{X}^2&=\norm{(-\Delta)^{-1/2}\nabla\times X}^2,\quad \norm{\nabla Y}^2=\norm{\nabla\times Y}^2.
\end{align}

\end{proposition}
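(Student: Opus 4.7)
The plan is to work in the Fourier basis, since the torus makes this transparent and the divergence-free condition becomes the orthogonality relation $k\cdot\hat X_k=0$ for all $k\in\Z^3_0$.

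For the first identity, I would expand $X=\frac{1}{(2\pi)^{3/2}}\sum_{k\in\Z^3_0}\hat X_k e^{ik\cdot x}$ and use that the Fourier symbol of $\nabla\times$ is $k\mapsto ik\wedge\cdot$. The key algebraic identity is $|ik\wedge \hat X_k|^2=|k|^2|\hat X_k|^2-|k\cdot\hat X_k|^2$, which collapses to $|k|^2|\hat X_k|^2$ precisely because $X\in\mathbf{L}^2$ is divergence-free. Since the Fourier symbol of $(-\Delta)^{-1/2}$ is $|k|^{-1}$, summing over $k$ yields
\begin{equation*}
\norm{(-\Delta)^{-1/2}\nabla\times X}^2=\sum_{k\in \Z^3_0}\frac{1}{|k|^2}|ik\wedge\hat X_k|^2=\sum_{k\in\Z^3_0}|\hat X_k|^2=\norm{X}^2.
\end{equation*}

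For the second identity, the cleanest route is to use the vector calculus identity $\nabla\times(\nabla\times Y)=\nabla(\div Y)-\Delta Y$, which for $Y\in\mathbf{H}^1$ reduces to $-\Delta Y$. Then, by integration by parts (justified by approximating $Y$ with trigonometric polynomials, or equivalently by a direct Fourier computation as above), one obtains
\begin{equation*}
\norm{\nabla\times Y}^2=\langle \nabla\times Y,\nabla\times Y\rangle=\langle Y,\nabla\times(\nabla\times Y)\rangle=\langle Y,-\Delta Y\rangle=\norm{\nabla Y}^2.
\end{equation*}
Alternatively, the same identity can be read off directly from the Fourier computation above applied to $\nabla Y$ and $\nabla\times Y$ separately, using the Parseval identity and the divergence-free condition $k\cdot \hat Y_k=0$.

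There is no real obstacle here: the proposition is a standard Plancherel-type computation whose only subtlety is making sure the divergence-free assumption is used at the right step so that the term $|k\cdot\hat X_k|^2$ (respectively $|k\cdot \hat Y_k|^2$) drops out. The zero-mean assumption built into $\mathbf{L}^2$ and $\mathbf{H}^1$ ensures that the sums range over $\Z^3_0$, so the operator $(-\Delta)^{-1/2}$ is well-defined on the modes considered, and no boundary or constant-mode issues arise.
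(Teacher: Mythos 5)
Your proof is correct, and it is the standard argument: the paper itself does not prove this proposition but simply cites \cite{Marchioro1994}, so your Fourier/Plancherel computation is a perfectly adequate self-contained verification. The only point worth making explicit is that for the complex Fourier coefficients the Lagrange identity should be read with the bilinear pairing, $\lvert k\wedge\hat X_k\rvert^2=\lvert k\rvert^2\lvert\hat X_k\rvert^2-\lvert k\cdot\hat X_k\rvert^2$ with $k\cdot\hat X_k=\sum_j k_j\hat X_k^j$, which is exactly the quantity that vanishes by $\div X=0$ (and since $k$ is real this causes no trouble); likewise your density/Fourier remark properly handles the fact that for $Y\in\mathbf{H}^1$ the pairing $\langle Y,-\Delta Y\rangle$ is an $H^1$--$H^{-1}$ duality rather than an $L^2$ inner product.
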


We start providing a control to some quantities related to the noise appearing in equation \eqref{Introductory equation Ito}.
\begin{lemma}\label{Ito_Strat_correct_H-1}
Assuming \autoref{HP noise 1}, then
\begin{align*}
\sumkj \lVert \skj \times \Bt{t}\rVert^2\leq  \left(\frac{\zeta^n_{H,\alpha}}{C_{1,H}^2 n^{2-\alpha}}+\frac{\zeta^n_{H,\beta}}{C_{2,H}^2 n^{2-\beta}}+\frac{2\zeta^n_{s,\gamma}}{C_{V}^2 n^{3-\gamma}}\right)\lVert \Bt{t}\rVert^2.  
\end{align*}
In case of \autoref{HP noise isotropo}, then 
\begin{align*}
 \sumkj \lVert \skj \times \Bt{t}\rVert^2=   \frac{4\zeta^n_{s,3}}{3C_V^2}\lVert \Bt{t}\rVert^2.
\end{align*}
In particular, assuming \autoref{HP noise 2} we have also
\begin{align*}
\sumkj \lVert \skj\cdot\nabla B^{n,3}_t\rVert^2&\leq \frac{\zeta_{H,\alpha}^n\lVert \nabla_H B^{n,3}_t\rVert^2}{2C_{1,H}^2n^{\alpha-2}}+\frac{\zeta^n_{H,\beta}\lVert \partial_3 B^{n,3}_t\rVert^2}{C_{2,H}^2n^{\beta-2}}+\frac{2\zeta^n_{s,\gamma}\lVert \nabla B^{n,3}_t\rVert^2}{3C_V^2n^{\gamma-3}},\\
\sumkj \lVert B^n_t\cdot\nabla \skjcomp{3}\rVert^2&\leq \left(\frac{\zeta^n_{H,2}}{C_{2,H}^2}+\frac{\zeta^n_{s,\gamma-2}}{C_{V}^2n^{\gamma-5}}\right)\lVert B^n_t\rVert^2.
\end{align*}
\end{lemma}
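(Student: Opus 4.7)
The plan is to reduce each of the four estimates to a direct evaluation or upper bound for a sum of $|\theta_{k,j}^n|^2$-type quantities, exploiting the explicit structure $\sigma_{k,j}^n(x)=\theta_{k,j}^n\,a_{k,j}e^{ik\cdot x}$ with $|a_{k,j}|=1$ and the fact that $\{k/|k|,a_{k,1},a_{k,2}\}$ is an orthonormal frame of $\R^3$ for each $k\in\Z^3_0$.

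For the first bound under \autoref{HP noise 1}, I would start from the pointwise inequality $|\sigma_{k,j}^n(x)\times B^n_t(x)|\leq|\theta_{k,j}^n|\,|B^n_t(x)|$, integrate in $x$, and sum in $(k,j)$. The resulting constant $\sum_{k,j}|\theta_{k,j}^n|^2$ splits into three contributions matching the three regimes in the definition of $\theta_{k,j}^n$: the pieces $k_3=0,\,j=1$ and $k_3=0,\,j=2$ give $\frac{1}{C_{1,H}^2}\sum_{k_H}|k_H|^{-\alpha}=\frac{\zeta_{H,\alpha}^n}{C_{1,H}^2 n^{\alpha-2}}$ and $\frac{\zeta_{H,\beta}^n}{C_{2,H}^2 n^{\beta-2}}$ respectively, while the piece $k_3\neq 0$, enlarged to the unrestricted annulus $n\leq|k|\leq 2n$ to absorb the constraint, yields $\frac{2\zeta_{s,\gamma}^n}{C_V^2 n^{\gamma-3}}$.

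The isotropic case \autoref{HP noise isotropo} requires a sharper argument because the statement asserts equality. I would use the identity $|a_{k,j}\times v|^2=|v|^2-(a_{k,j}\cdot v)^2$ together with the completeness relation $\sum_{j=1}^{2}(a_{k,j}\cdot v)^2=|v|^2-(\hat k\cdot v)^2$, which is immediate from orthonormality of the frame, and verifiable directly from the explicit formula for $a_{k,j}$ when $k_3=0$. This reduces the total sum to $\frac{1}{C_V^2}\int_{\T^3}\sum_{n\leq|k|\leq 2n}|k|^{-3}\bigl(|B^n_t|^2+(\hat k\cdot B^n_t)^2\bigr)\,dx$. Since the annular index set is invariant under all coordinate sign flips and permutations, a trace computation gives $\sum_k k_ik_j|k|^{-5}=\tfrac{1}{3}\delta_{ij}\sum_k|k|^{-3}$, whence $\sum_k(\hat k\cdot v)^2|k|^{-3}=\tfrac{1}{3}|v|^2\sum_k|k|^{-3}$, producing the prefactor $\tfrac{4\zeta_{s,3}^n}{3C_V^2}$.

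For the third estimate under \autoref{HP noise 2}, I would expand via Plancherel: writing $B^{n,3}_t=\sum_m\hat B_m e^{im\cdot x}$, one has $\sum_{k,j}\|\sigma_{k,j}^n\cdot\nabla B^{n,3}_t\|^2=(2\pi)^3\sum_m|\hat B_m|^2\sum_{k,j}|\theta_{k,j}^n|^2(a_{k,j}\cdot m)^2$. The three regimes of $(k,j)$ are handled separately: for $k_3=0,j=1$ one has $(a_{k,1}\cdot m)^2=(k_H^{\perp}\cdot m_H)^2/|k_H|^2$, and 2D isotropy $\sum_{k_H}k_H^i k_H^j/|k_H|^{\alpha+2}=\tfrac{1}{2}\delta^{ij}\sum_{k_H}|k_H|^{-\alpha}$ produces $\frac{|m_H|^2\zeta_{H,\alpha}^n}{2C_{1,H}^2 n^{\alpha-2}}$, yielding the $\|\nabla_H B^{n,3}\|^2$ term with the factor $1/2$; for $k_3=0,j=2$, $(a_{k,2}\cdot m)^2=m_3^2$ gives the $\|\partial_3 B^{n,3}\|^2$ term directly; for $k_3\neq 0$ I enlarge the sum in $k$ and apply the same 3D isotropy argument as above to obtain $\frac{2|m|^2\zeta_{s,\gamma}^n}{3C_V^2 n^{\gamma-3}}$. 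The final inequality is the most direct: writing $B^n_t\cdot\nabla\sigma_{k,j}^{n,3}=i\theta_{k,j}^n(a_{k,j})_3(k\cdot B^n_t)e^{ik\cdot x}$ and using $\|k\cdot B^n_t\|^2\leq|k|^2\|B^n_t\|^2$ (Plancherel), the problem reduces to bounding $\sum_{k,j}|\theta_{k,j}^n|^2(a_{k,j})_3^2|k|^2$; the $k_3=0$ part sees only $j=2$ since $(a_{k,1})_3=0$, giving $\frac{\zeta_{H,2}^n}{C_{2,H}^2}$ by using $\beta=4$, while the $k_3\neq 0$ part uses $(a_{k,j})_3^2\leq 1$ and the unrestricted $\sum_k|k|^{2-\gamma}$ to produce $\frac{\zeta_{s,\gamma-2}^n}{C_V^2 n^{\gamma-5}}$. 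The one conceptually nontrivial ingredient in the whole lemma is the isotropic identity and its annular trace version; everything else is a direct bookkeeping of the coefficients $\theta_{k,j}^n$ against the definitions of $\zeta_{H,j}^n$ and $\zeta_{s,j}^n$.
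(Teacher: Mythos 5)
Your proposal is correct and follows essentially the same route as the paper: pointwise/H\"older bounds that reduce everything to sums of $\lvert\theta_{k,j}^n\rvert^2$-weights, combined with the exact completeness relation $\sum_{j}a_{k,j}\otimes a_{k,j}=I-\frac{k\otimes k}{\lvert k\rvert^2}$ and the annular trace identities (factor $1/2$ in 2D, $1/3$ in 3D), which is precisely how the paper gets the isotropic equality and the sharp $\nabla_H$/$\partial_3$ splitting (the paper phrases the isotropic cancellation through the Fourier expansion of $B^n_t$ and the covariance matrix $Q_0^n(0)$, but it is the same computation). The only slip is in the last bound: estimating $(a_{k,j})_3^2\le 1$ separately for $j=1,2$ on the modes with $k_3\neq 0$ gives, after summing in $j$, the constant $\tfrac{2\zeta^n_{s,\gamma-2}}{C_V^2 n^{\gamma-5}}$, twice the one stated; to recover the lemma as written use $\sum_{j\in\{1,2\}}(a_{k,j}\cdot e_3)^2=1-k_3^2/\lvert k\rvert^2\le 1$, i.e. the same completeness relation you already invoke in the isotropic case.
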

\begin{proof}
    By H\"older's inequality we have
    \begin{align*}
    \sumkj \lVert \skj \times \Bt{t}\rVert^2\leq \sumkj \lvert \tkj\rvert^2\lVert \Bt{t}\rVert^2.  
    \end{align*}
    In the first case, due to the relation above we get
    \begin{align*}
    \sumkj \lVert \skj \times \Bt{t}\rVert^2& \leq \left(\sumkmeanjuno \lvert \tkj\rvert^2+\sumkmeanjdue \lvert \tkj\rvert^2+\sumkoscj \lvert \tkj\rvert^2 \right) \lVert \Bt{t}\rVert^2\\&\leq \left(\sum_{\substack{k\in \Z^2_0\\
    n\leq \lvert k\rvert \leq 2n}}\frac{1}{C_{1,H}^2\lvert k\rvert^{\alpha}}+\sum_{\substack{k\in \Z^2_0\\
    n\leq \lvert k\rvert \leq 2n}} \frac{1}{C_{2,H}^2\lvert k\rvert^{\beta}}+2\sum_{\substack{k\in \Z_0^3\\ n\leq \lvert k\rvert \leq 2n}} \frac{1}{C_{V}^2\lvert k\rvert^{\gamma}} \right)\lVert \Bt{t}\rVert^2\\ & \leq \left(\frac{\zeta^n_{H,\alpha}}{C_{1,H}^2 n^{2-\alpha}}+\frac{\zeta^n_{H,\beta}}{C_{2,H}^2 n^{2-\beta}}+\frac{2\zeta^n_{s,\gamma}}{C_{V}^2 n^{3-\gamma}}\right)\lVert \Bt{t}\rVert^2. 
    \end{align*}
    However, in the second case, due to the symmetries of the coefficients, we can obtain a sharp relation which is our claim. Let us denote by 
    \begin{align*}
        b^{n,h,l}_t:=\langle \Bt{t}, a_{h,l}e^{-ih\cdot x}\rangle,
    \end{align*}
    therefore
    by definition
    \begin{align*}
    \Bt{t}&=\sum_{\substack{h\in \Z^3_0,\\ l\in \{1,2\}}}    b^{n,h,l}_t a_{h,l}e^{ih\cdot x},\\
    \sigma^n_{k,j}\times \Bt{t}&=\tkj\sum_{\substack{h\in \Z^3_0,\\ l\in \{1,2\}}}   b^{n,h,l}_t a_{k,j}\times a_{h,l}e^{i(h+k)\cdot x}
    \end{align*}
    and for each $k,j$ it holds
    \begin{align}\label{estimate_precise_cancellation}
    \lVert \skj \times \Bt{t}\rVert^2&=\lvert\tkj\rvert^2\sum_{\substack{h\in \Z^3_0,\\ l,l'\in \{1,2\}}}\left(a_{k,j}\times a_{h,l} \right)\cdot\left(a_{k,j}\times a_{h,l'} \right)b^{n,h,l}_t\overline{b^{n,h,l'}_t}\notag\\ & =\lvert\tkj\rvert^2\sum_{\substack{h\in \Z^3_0,\\ l,l'\in \{1,2\}}}\left(\delta_{l,l'}-a_{k,j}\otimes a_{k,j}:a_{h,l}\otimes a_{h,l'}\right)b^{n,h,l}_t\overline{b^{n,h,l'}_t}\notag\\ & =\lvert\tkj\rvert^2 \lVert \Bt{t}\rVert^2- \lvert\tkj\rvert^2\sum_{\substack{h\in \Z^3_0,\\ l,l'\in \{1,2\}}}a_{k,j}\otimes a_{k,j}:a_{h,l}\otimes a_{h,l'}b^{n,h,l}_t\overline{b^{n,h,l'}_t}.
    \end{align}
    Therefore, in case of \autoref{HP noise isotropo} we obtain 
    \begin{align*}
        \sumkj \lVert \skj \times \Bt{t}\rVert^2 &=\frac{2\zeta^n_{s,3}}{C_V^2}\lVert \Bt{t}\rVert^2-\sum_{\substack{h\in \Z^3_0,\\ l,l'\in \{1,2\}}}\frac{2\zeta^n_{s,3}}{3C_V^2}I:\left(a_{h,l}\otimes a_{h,l'}\right)b^{n,h,l}_t\overline{b^{n,h,l'}_t}\\ & =\frac{4\zeta^n_{s,3}}{3C_V^2}\lVert \Bt{t}\rVert^2.
    \end{align*}
Lastly we assume \autoref{HP noise 2}, in this case only few terms in the sum must be considered due to our choice of the $\skj$'s.
Then, we obtain
\begin{multline*}
\sumkj \lVert \skj\cdot\nabla B^{n,3}_t\rVert^2\\=\sum_{\substack{k\in \Z^2_0\\ n\leq \lvert k\rvert \leq 2n}}\left(\frac{\lVert \nabla_H B^{n,3}_t\rVert^2}{2C_{1,H}^2\lvert k\rvert^{\alpha}}+\frac{\lVert \partial_3 B^{n,3}_t\rVert^2}{C_{2,H}^2\lvert k\rvert^{\beta}}\right)+\frac{1}{C_V^2}\sum_{\substack{k\in \Z^3_0,\ k_3\neq 0\\ n\leq \lvert k\rvert \leq 2n}}\frac{1}{\lvert k\rvert^{\gamma}}\left\langle \left(I-\frac{k\otimes k}{\lvert k\rvert^2}\right)\nabla B^{n,3}_t,\nabla B^{n,3}_t\right\rangle\\ \leq \frac{\zeta_{H,\alpha}^n\lVert \nabla_H B^{n,3}_t\rVert^2}{2C_{1,H}^2n^{\alpha-2}}+\frac{\zeta^n_{H,\beta}\lVert \partial_3 B^{n,3}_t\rVert^2}{C_{2,H}^2n^{\beta-2}}+\frac{1}{C_V^2}\sum_{\substack{k\in \Z^3_0\\ n\leq \lvert k\rvert \leq 2n}}\frac{1}{\lvert k\rvert^{\gamma}}\left\langle \left(I-\frac{k\otimes k}{\lvert k\rvert^2}\right)\nabla B^{n,3}_t,\nabla B^{n,3}_t\right\rangle\\ =\frac{\zeta_{H,\alpha}^n\lVert \nabla_H B^{n,3}_t\rVert^2}{2C_{1,H}^2n^{\alpha-2}}+\frac{\zeta^n_{H,\beta}\lVert \partial_3 B^{n,3}_t\rVert^2}{C_{2,H}^2n^{\beta-2}}+\frac{2\zeta^n_{s,\gamma}\lVert \nabla B^{n,3}_t\rVert^2}{3C_V^2n^{\gamma-3}}
\end{multline*}
and by H\"older's inequality
\begin{align*}
\sumkj \lVert B^n_t\cdot\nabla \skjcomp{3}\rVert^2&\leq  \sum_{\substack{k\in \Z^2_0\\ n\leq \lvert k\rvert \leq 2n}}   \frac{\lVert B^n_t\rVert^2}{C_{2,H}^2 \lvert k\rvert^{\beta-2}}+\sum_{\substack{\lvert k\in Z^3_0\\ n\leq \lvert k\rvert\leq 2n}}\frac{\lVert B^n_t\rVert^2}{C_{V}^2 \lvert k\rvert^{\gamma-2}}\\ & \leq \left(\frac{\zeta^n_{H,2}}{C_{2,H}^2}+\frac{\zeta^n_{s,\gamma-2}}{C_{V}^2n^{\gamma-5}}\right)\lVert B^n_t\rVert^2.
\end{align*}
\end{proof}
\begin{remark}
Note that the simple H\"older estimate in case of \autoref{HP noise isotropo} gives us 
    \begin{align*}
    \sumkj \lVert \skj \times \Bt{t}\rVert^2&\leq 2\sum_{\substack{k\in \Z^3_0\\
    n\leq \lvert k\rvert \leq 2n}}\frac{1}{\lvert k\rvert^3 C_V^2}\lVert \Bt{t}\rVert^2\\ & =\frac{2\zeta^n_{s,3}}{C_V^2}\lVert \Bt{t}\rVert^2, 
\end{align*}
which is worse of the second statement of \autoref{Ito_Strat_correct_H-1} due to the fact that we are neglecting the second term of \eqref{estimate_precise_cancellation} which produce some cancellations.
\end{remark}
Now we are ready to prove the main result of this section.
\begin{proposition}\label{prop:compact_space}
Assuming \autoref{HP noise 1} or \autoref{HP noise isotropo} then
\begin{align}\label{compactness space_h-1}
    \expt{\sup_{t\in [0,T]}\norm{B^n_t}_{\mathbf{H}^{-1}}^2}+\expt{\int_0^T \norm{B^n_t}^2} dt\lesssim \norm{B_0}_{\mathbf{H}^{-1}}^2.
\end{align}
In particular if \autoref{HP noise 2} is satisfied then also
\begin{align}\label{compactness space_l2}
 \expt{\sup_{t\in [0,T]}\norm{B^{n,3}_t}^2}+\int_0^T \expt{\norm{\nabla B^{n,3}_t}^2} dt\lesssim \norm{B_0}_{\mathbf{H}^{-1}}^2+\norm{B^{3}_0}^2.    
\end{align}
\end{proposition}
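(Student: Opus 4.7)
The plan is to apply It\^o's formula to $V(t)=\lVert B^n_t\rVert^2_{\mathbf{H}^{-1}}=\langle B^n_t,(-\Delta)^{-1}B^n_t\rangle$ using the It\^o form \eqref{Introductory equation Ito}, and to show that the resulting estimate closes thanks to the sharp bounds in \autoref{Ito_Strat_correct_H-1} together with the smallness restrictions on the noise coefficients prescribed by \autoref{HP noise 1} (when $\alpha=2$) and \autoref{HP noise isotropo}. The key mechanism is that the quadratic variation of the stochastic integral, when measured in $\mathbf{H}^{-1}$, is controlled by $\sum_{k,j}\lVert\sigma^n_{k,j}\times B^n_t\rVert^2$ via \autoref{equivalence norms}, and this is exactly the quantity bounded by a multiple of $\lVert B^n_t\rVert^2$ in \autoref{Ito_Strat_correct_H-1}. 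The $\Lambda^n$ contribution is non-positive (it gives extra dissipation), the $\Lambda^n_\rho$ contribution is first order and can be absorbed.

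\emph{Detailed outline of the first estimate.} It\^o's formula yields
\begin{align*}
dV(t)= 2\langle B^n_t, (\eta\Delta+\Lambda^n+\Lambda^n_\rho)B^n_t\rangle_{\mathbf{H}^{-1}}\,dt +dM_t + d\mathrm{QV}_t,
\end{align*}
where $dM_t$ collects the stochastic integrals and $d\mathrm{QV}_t$ is the It\^o corrector coming from the correlations of the $W^{k,j}$. The dissipative term is $-2\eta\lVert B^n_t\rVert^2\,dt$. For the It\^o corrector, write $\mathcal{L}_{\sigma^n_{k,j}}B^n=-\nabla\times(\sigma^n_{k,j}\times B^n)$ (valid since both fields are divergence-free) and use \autoref{equivalence norms}(i) applied to $Q(\sigma^n_{k,j}\times B^n)$ to get
\begin{align*}
\lVert(-\Delta)^{-1/2}\mathcal{L}_{\sigma^n_{k,j}}B^n\rVert^2 = \lVert Q(\sigma^n_{k,j}\times B^n)\rVert^2\leq \lVert\sigma^n_{k,j}\times B^n\rVert^2.
\end{align*}
Summing over $(k,j)$ and handling the $\rho$-cross correlations similarly via Cauchy--Schwarz, \autoref{Ito_Strat_correct_H-1} gives $d\mathrm{QV}_t\le 2C_{\mathrm{It\^o}}\lVert B^n_t\rVert^2\,dt$ with $C_{\mathrm{It\^o}}$ of the form $\tfrac{\zeta_{H,\alpha}^n}{C_{1,H}^2 n^{2-\alpha}}+o(1)$ under \autoref{HP noise 1} (respectively $\tfrac{4\zeta_{s,3}^n}{3C_V^2}$ under \autoref{HP noise isotropo}).

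The term $\langle B^n,\Lambda^n B^n\rangle_{\mathbf{H}^{-1}}$ is non-positive by Plancherel and the fact that $\Lambda^n$ has non-negative symbol (it only reinforces dissipation). For $\Lambda^n_\rho$, writing $\Lambda^n_\rho B^n=\nabla\times(\mathcal{A}^n_\rho B^n)$ and using \autoref{equivalence norms} we bound
\begin{align*}
|\langle B^n_t,\Lambda^n_\rho B^n_t\rangle_{\mathbf{H}^{-1}}|\le \lVert B^n_t\rVert_{\mathbf{H}^{-1}}\lVert Q(\mathcal{A}^n_\rho B^n_t)\rVert\le \lVert\mathcal{A}^n_\rho\rVert\, \lVert B^n_t\rVert_{\mathbf{H}^{-1}}\lVert B^n_t\rVert\le \tfrac{\varepsilon}{2}\lVert B^n_t\rVert^2+C_\varepsilon V(t).
\end{align*}
Putting everything together and using \autoref{dimension turbulent viscosity} (which ensures $\eta-C_{\mathrm{It\^o}}>0$), there exists $c>0$ such that
\begin{align*}
dV(t)+c\lVert B^n_t\rVert^2\,dt\le C\,V(t)\,dt+dM_t.
\end{align*}
Taking expectations, applying Gr\"onwall to $\mathbb{E}[V(t)]$, and then using the Burkholder--Davis--Gundy inequality on $M_t$ (whose quadratic variation is controlled by $\int_0^T\lVert B^n_s\rVert_{\mathbf{H}^{-1}}^2\lVert B^n_s\rVert^2\,ds$) together with a Young-type splitting to reabsorb $\varepsilon\,\mathbb{E}[\sup_t V(t)]$ on the left, produces \eqref{compactness space_h-1}.

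\emph{The $L^2$-estimate for $B^{n,3}$ under \autoref{HP noise 2}.} Using the scalar weak formulation from \autoref{general_duality_pairing}, I would apply It\^o to $\lVert B^{n,3}_t\rVert^2$. The dissipation is $-2\eta\lVert\nabla B^{n,3}_t\rVert^2$, the $\Lambda^n$-contribution is again non-positive, and the It\^o corrector is bounded via the second block of \autoref{Ito_Strat_correct_H-1}, namely by $\tfrac{\zeta_{H,2}^n}{2C_{1,H}^2}\lVert\nabla_H B^{n,3}_t\rVert^2+o(1)\lVert\nabla B^{n,3}_t\rVert^2+O(1)\lVert B^n_t\rVert^2$. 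The mismatched first-order $\Lambda^n_\rho$ term yields $\tfrac{\rho\zeta_{H,2}^n}{C_{1,H}C_{2,H}}\langle(B^{n,H}_t)^\perp,\nabla_H B^{n,3}_t\rangle$, which by Young's inequality splits into $\varepsilon\lVert\nabla_H B^{n,3}_t\rVert^2+C_\varepsilon\lVert B^n_t\rVert^2$. The restriction $C_{1,H}>\sqrt{\zeta_{H,2}/\eta}$ once more yields a strictly positive remaining diffusion, and the leftover $\lVert B^n_t\rVert^2$ terms are integrated using \eqref{compactness space_h-1} already proven in the first step. BDG and Gr\"onwall finish the proof.

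\emph{Main obstacle.} The delicate point is ensuring that the coefficient of $\lVert B^n_t\rVert^2$ (respectively $\lVert\nabla B^{n,3}_t\rVert^2$) in the drift remains strictly positive after collecting the It\^o corrector and the $\Lambda^n_\rho$ cross terms. This is precisely the structural reason for the quantitative restrictions on $C_{1,H}$ and $C_V$ in the hypotheses (and is the content of \autoref{rmk negative eddy}): without them, the fluctuations of the stretching term in the stochastic integral would dominate, the energy estimate would not close, and the "negative eddy viscosity" scenario discussed in the introduction would prevent uniform control.
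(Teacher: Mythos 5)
Your strategy is essentially the paper's (Itô formula for $\norm{B^n_t}_{\mathbf{H}^{-1}}^2$, the sharp bounds of \autoref{Ito_Strat_correct_H-1} for the quadratic variation, Young absorption of $\Lambda^n_\rho$, BDG plus Gr\"onwall), but there is a genuine quantitative gap in the isotropic regime. You only record that the $\Lambda^n$-contribution is non-positive and then close the drift by requiring $\eta-C_{\mathrm{It\^o}}>0$, invoking \autoref{dimension turbulent viscosity}. However, your own constant in that regime is $C_{\mathrm{It\^o}}=\tfrac{4\zeta^n_{s,3}}{3C_V^2}=2\eta^n_{iso}$, while \autoref{HP noise isotropo} (and \autoref{dimension turbulent viscosity}) only give $\eta_{iso}<\eta$, not $2\eta_{iso}<\eta$. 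Hence for $\eta_{iso}\in(\eta/2,\eta)$ the coefficient $-2\eta+2C_{\mathrm{It\^o}}+\varepsilon$ in your inequality is positive and Gr\"onwall cannot absorb it. The paper closes the estimate under the stated hypothesis precisely because the $\Lambda^n$ term is kept quantitatively: $2\brak{(-\Delta)^{-1}\Lambda^n B^n_t,B^n_t}=-2\eta^n_{iso}\norm{B^n_t}^2$ exactly (cf.\ \eqref{estimate_1_iso} and \autoref{form covariance matrix isotropic}), so that combined with the sharp corrector bound $4\eta^n_{iso}\norm{B^n_t}^2$ the net drift is $-2(\eta-\eta^n_{iso})\norm{B^n_t}^2$, of which roughly $(\eta-\eta_{iso})$ is then spent on absorbing $\Lambda^n_\rho$. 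You must therefore not discard the extra dissipation produced by the It\^o--Stratonovich corrector in the isotropic case; "non-positive" is not enough.

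In the helical case your balance does close as written: there the $\mathbf{H}^{-1}$-dissipation supplied by $\Lambda^n$ is only $o(1)$ (the $\eta^n_T\Delta_H$ part is degenerate on purely vertical modes, cf.\ \eqref{estimate_ito_strat_op_non_iso}), and \autoref{HP noise 1} with $\alpha=2$ gives exactly $2\eta_T<\eta$, i.e.\ $\eta>C_{\mathrm{It\^o}}+o(1)$. Your treatment of \eqref{compactness space_l2} is also correct, though slightly lossier than the paper's: in \eqref{ito_l2_comp3} the dissipation $2\eta^n_T\norm{\nabla_H B^{n,3}_t}^2$ coming from $\Lambda^n$ cancels the leading corrector term exactly, so that step needs no smallness of the noise (only $n$ large), whereas you consume part of the $\eta$-dissipation instead — harmless under \autoref{HP noise 2}. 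The remaining ingredients (BDG with a Young splitting of the supremum, Gr\"onwall, and using \eqref{compactness space_h-1} to integrate the leftover $\norm{B^n_t}^2$ terms) coincide with the paper's proof.
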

\begin{proof}
Since $\nabla\times(\skj \times B^n_t)=\mathcal{L}_{\skj}B^n_t$, the It\^o formula for $\|B^n_t\|^2_{\mathbf{H}^{-1}}$ reads as
\begin{align}\label{ito_h-1}
    d\|(-\Delta)^{-1/2}B^n_t\|^2 + 2\eta\|B^n_t\|^2dt &= 2\brak{(-\Delta)^{-1}\Lambda^n B^n_t, B^n_t}dt \notag\\
    &+ 2\brak{(-\Delta)^{-1/2}\Lambda_\rho^n B^n_t, (-\Delta)^{-1/2} B^n_t}dt\notag\\
    & + 2\sumkj\|(-\Delta)^{-1/2} \nabla\times (\skj\times B^n_t)\|^2 dt \notag\\ &+ 2\rho\sum_{k\in \Z^2_0} \brak{\nabla\times (\sigma_{k,1}^n\times B^n_t) , (-\Delta)^{-1}\nabla\times(\sigma_{-k, 2}^n\times B^n_t)}dt\notag\\ & +2\rho\sum_{k\in \Z^2_0} \brak{\nabla\times (\sigma_{k,2}^n\times B^n_t) , (-\Delta)^{-1}\nabla\times(\sigma_{-k, 1}^n\times B^n_t)}dt\notag\\ & + dM^n_t,
\end{align}
where 
\begin{align*}
    M^n_t&=2\sumkj\int_0^t\langle(-\Delta)^{-1}\nabla\times(\skj\times B^n_s),B^n_s \rangle dW^{k,j}_s.
\end{align*}
We treat first the case of \autoref{HP noise isotropo} being easier.
By the definition of $\Lambda^n$ in this case we get easily
\begin{align}\label{estimate_1_iso}
\brak{(-\Delta)^{-1}\Lambda^n B^n_t, B^n_t}=-\eta^n_{iso}\norm{B^n_t}^2.   
\end{align}
Concerning the It\^o correctors we have
\begin{align}\label{estimate_2_iso}
\sumkj\|(-\Delta)^{-1/2} \nabla\times (\skj\times B^n_t)\|^2& = \sumkj \langle (-\Delta)^{-1}\nabla\times \nabla\times  (\skj\times B^n_t),\overline{\skj\times B^n_t}\rangle\notag\\ & =   \sumkj \langle (-\Delta)^{-1}\nabla\times \nabla\times P[Q[\skj\times B^n_t]],\overline{\skj\times B^n_t}\rangle\notag\\ & =\sumkj \langle P[Q[\skj\times B^n_t]],\overline{P[Q[\skj\times B^n_t]]}\rangle\notag\\ & \leq \sumkj\lVert \skj\times B^n_t\rVert^2= 2\eta^n_{iso}\lVert \Bt{t}\rVert^2,
\end{align}
due to the properties of the projections $P,\ Q$, relation \eqref{property 1 norm} and \autoref{Ito_Strat_correct_H-1}.
The other terms appearing in \eqref{ito_h-1} can be treated easily by H\"older and Young's inequalities. Indeed, again thanks to relation \eqref{property 1 norm} 
\begin{align}\label{estimate_covariance_part_iso}
    \brak{(-\Delta)^{-1/2}\Lambda_\rho^n B^n_t, (-\Delta)^{-1/2} B^n_t}&\leq \frac{\lvert\rho\rvert\zeta^n_{H,2}} {C_{1,H}C_{2,H}} \lVert  B^n_t\rVert_{\mathbf{H}^{-1}}\norm{(-\Delta)^{-1/2}\nabla\times B^{n,H}_t}\notag\\ & \leq  \frac{\lvert\rho\rvert\zeta^n_{H,2}} {C_{1,H}C_{2,H}} \lVert  B^n_t\rVert_{\mathbf{H}^{-1}}\norm{ B^{n}_t}\notag\\ & \leq \frac{(\eta-\eta_{iso})}{2}\norm{ B^{n}_t}^2+\frac{\lvert \rho\rvert^2\left(\zeta^n_{H,2}\right)^2}{2C_{1,H}^2C_{2,H}^2(\eta-\eta_{iso})} \lVert  B^n_t\rVert_{\mathbf{H}^{-1}}^2
\end{align}
since by assumptions $\eta>\eta_{iso}$ and 
\begin{align}\label{estimate_corrector_covariance_iso}
\rho\sum_{k\in \Z^2_0} \brak{\nabla\times (\sigma_{k,1}^n\times B^n_t) , (-\Delta)^{-1}\nabla\times(\sigma_{-k, 2}^n\times B^n_t)}&\leq \lvert\rho \rvert\sum_{\substack{k\in \Z^2_0\\ n\leq \lvert k\rvert\leq 2n}}\frac{1}{C_{1,H}C_{2,H}\lvert k\rvert^{\alpha/2+\beta/2}}\lVert B^n_t\rVert^2\notag\\ &  \leq  \lvert\rho\rvert \frac{\zeta^n_{H,\alpha/2+\beta/2-2}}{C_{1,H}C_{2,H}n^{\alpha/2+\beta/2-2}}\lVert B^n_t\rVert^2
\end{align}
and analogously for the other. Therefore, integrating \eqref{ito_h-1} between $0$ and $t$ and considering its expected value, thanks to \eqref{estimate_1_iso},\eqref{estimate_2_iso},\eqref{estimate_covariance_part_iso},\eqref{estimate_corrector_covariance_iso} we obtain
\begin{align*}
\expt{\norm{B^n_t}_{\mathbf{H}^{-1}}^2} +\eta   \expt{\int_0^t\norm{B^n_s}^2ds}&\leq \norm{B^n_0}_{\mathbf{H}^{-1}}^2+\left(\eta_{iso}+O(n^{-1})\right)\expt{\int_0^t\norm{B^n_s}^2ds}\\ &+\left(\frac{\lvert \rho\rvert^2\left(\zeta_{H,2}\right)^2}{C_{1,H}^2C_{2,H}^2(\eta-\eta_{iso})} +O(n^{-1})\right)\expt{\int_0^t\lVert  B^n_s\rVert_{\mathbf{H}^{-1}}^2ds}.
\end{align*}
As a consequence, if $n$ is large enough such that $\eta_{iso}+O(n^{-1})<\eta$ we obtain, by Gr\"onwall's lemma
\begin{align}\label{estimate_without_sup_iso}
\sup_{t\in [0,T]}\expt{\norm{B^n_t}_{\mathbf{H}^{-1}}^2} +\expt{\int_0^t\norm{B^n_s}^2ds}\lesssim \norm{B^n_0}_{\mathbf{H}^{-1}}^2.   
\end{align}
Restarting again from \eqref{ito_h-1} and considering the expected value of the supremum in time between $[0,T]$, thanks to \eqref{estimate_1_iso},\eqref{estimate_2_iso},\eqref{estimate_covariance_part_iso},\eqref{estimate_corrector_covariance_iso}, for $n$ large enough such that $\eta_{iso}+O(n^{-1})<\eta$ we obtain 
\begin{multline}\label{estimate_sup time}
    \expt{\sup_{t\in [0,T]}\norm{B^n_t}_{\mathbf{H}^{-1}}^2}\\ \leq \norm{B^n_0}_{\mathbf{H}^{-1}}^2+\left(\frac{\lvert \rho\rvert^2\left(\zeta_{H,2}\right)^2}{C_{1,H}^2C_{2,H}^2(\eta-\eta_{iso})} +O(n^{-1}))\right)T\sup_{t\in [0,T]}\expt{\norm{B^n_t}_{\mathbf{H}^{-1}}^2}+2\expt{\sup_{t\in [0,T]}\left\lvert M^n_t\right\rvert}.
\end{multline}
The last term can be estimated by Burkholder-Davis-Gundy inequality obtaining 
\begin{align}\label{estimate_martingale_term}
\expt{\sup_{t\in [0,T]}\left\lvert M^n_t\right\rvert}& \lesssim_{\lvert \rho\rvert} \expt{\left(\sumkj\int_0^T\langle (-\Delta)^{-1}\nabla\times(\skj\times B^n_s),B^n_s\rangle^2ds\right)^{1/2}}\notag\\ & \leq \expt{\sup_{t\in [0,T]}\norm{ B^n_t}_{\mathbf{H}^{-1}}\left(\sumkj\lvert \tkj\rvert^2\int_0^T  \norm{B^n_s}^2 ds\right)^{1/2}}\notag\\ & \leq \frac{1}{2}\expt{\sup_{t\in [0,T]}\norm{ B^n_t}^2_{\mathbf{H}^{-1}}}+C_{\lvert \rho\rvert,\eta_{iso}}\expt{\int_0^T \norm{B^n_s}^2 ds }.
\end{align}
Combining \eqref{estimate_without_sup_iso},\eqref{estimate_sup time}, \eqref{estimate_martingale_term} we get immediately \eqref{compactness space_h-1} in case of \autoref{HP noise isotropo}.\\
Let us now consider the case of \autoref{HP noise 1}. 
We start by noticing that 
\begin{align*}
    \Lambda^n = \left(\eta_T^n+\eta^n_V-\frac{\eta^n_{R,V}}{2}\right) \Delta + \left(\eta_R^n -\frac{\eta^n_{R,V}}{2}- \eta_T^n\right)\partial_{33}.
\end{align*}
As a consequence we have
\begin{align}\label{estimate_ito_strat_op_non_iso}
    \brak{(-\Delta)^{-1}\Lambda^n B^n_t, B^n_t} &=  -\left(\eta_T^n+\eta^n_V-\frac{\eta^2_{R,V}}{2}\right) \|B_t\|^2 + \pa{\eta_T^n+\frac{\eta^n_{R,V}}{2}- \eta_R^n}\|(-\Delta)^{-1/2}\partial_3B_t\|^2 \notag\\
    &\leq -\eta^n_V \|B^n_t\|^2+\eta^n_R\|B^n_t\|^2+\eta^n_{R,V}\|B^n_t\|^2.
\end{align}
While, arguing as above, due to \autoref{Ito_Strat_correct_H-1}
\begin{align}\label{estimate_corr_non_iso}
    \sumkj\|(-\Delta)^{-1/2} \nabla\times (\skj\times B^n_t)\|^2& \leq  \left(2\eta^n_T+\eta^n_{R}+3\eta^n_V\right)\lVert \Bt{t}\rVert^2.
\end{align}
Estimate \eqref{estimate_covariance_part_iso} and \eqref{estimate_corrector_covariance_iso} continue to hold, possibly replacing $\eta_{iso}$ with $2\eta_T$. In conclusion, combining \eqref{ito_h-1},\eqref{estimate_ito_strat_op_non_iso}, \eqref{estimate_corr_non_iso},\eqref{estimate_covariance_part_iso} and \eqref{estimate_corrector_covariance_iso} we obtain
\begin{align}\label{no_sup_in_time_non_iso}
\expt{\norm{B^n_t}_{\mathbf{H}^{-1}}^2} +\eta   \expt{\int_0^t\norm{B^n_s}^2ds}&\leq \norm{B^n_0}_{\mathbf{H}^{-1}}^2+\left(4\eta^n_T-2\eta_T+O(n^{2-\beta}+n^{3-\gamma})\right)\expt{\int_0^t\norm{B^n_s}^2ds}\notag\\ &+\left(\frac{\lvert \rho\rvert^2\left(\zeta_{H,2}\right)^2}{C_{1,H}^2C_{2,H}^2(\eta-2\eta_{T})} +O(n^{-1})\right)\expt{\int_0^t\lVert  B^n_s\rVert_{\mathbf{H}^{-1}}^2ds}.    
\end{align}
Arguing as above, by Gr\"onwall's lemma and Burkholder-Davis-Gundy inequality we can prove \eqref{compactness space_h-1} starting from \eqref{no_sup_in_time_non_iso}. We omit the easy details. We are left to show the validity of the additional estimate in case of \autoref{HP noise 2}. Due to \autoref{general_duality_pairing} and \eqref{mirror symmetry product 0}, the It\^o formula for $\|B_t^{n,3}\|^2$ reads 
\begin{align}\label{ito_l2_comp3}
    d\|B_t^{n,3}\|^2 &+ 2\left(\eta+\eta^n_V-\frac{\eta^n_{R,V}}{2}\right) \|\grad B_t^{n,3}\|^2dt+2\eta^n_T\|\nabla_H B_t^{n,3}\|^2dt+2\left(\eta^n_R-\frac{\eta^n_{RV}}{2}\right)\|\partial_3 B_t^{n,3}\|^2dt \notag \\ &= \frac{2\rho \zeta^n_{H,2}} {C_{1,H}C_{2,H}}\brak{(B^{n,H}_t)^{\perp}, \nabla_H B_t^{n,3}} dt + 2\sumkj \pa{\|\skj\cdot\nabla B^{n,3}_t\|^2 + \|B^n_t\cdot\nabla \skjcomp{3}\|^2}dt \notag\\ &+ 2\rho \sum_{\substack{k\in \Z^2_0\\ n\leq \lvert k\rvert\leq 2n}}\pa{\brak{\sk{1}\cdot \grad_H B_t^{n,3}, B_t^{n,H}\cdot \grad\smkcomp{3}{2}}+ \brak{B_t^{n,H}\cdot \grad\skcomp{3}{2},\smk{1}\cdot \grad_H B_t^{n,3} }}dt+ dM_t^{n,3} ,
\end{align}
where \begin{align*}
    M_t^{n,3} = 2\sumkj\int_0^t  \brak{\skj\cdot \grad B_s^3 - B_s\cdot\grad \skjcomp{3}, B_s^{n,3}}dW_s^{k,j}.
\end{align*}
We can treat the different terms in the right hand side of \eqref{ito_l2_comp3} similarly to previous case.
Thanks to \autoref{Ito_Strat_correct_H-1} we obtain
\begin{align}\label{estimate_corr_L2}
\sumkj \pa{\|\skj\cdot\nabla B^{n,3}_t\|^2 + \|B^n_t\cdot\nabla \skjcomp{3}\|^2}& \leq \eta^n_T\lVert \nabla_H B^{n,3}_t\rVert^2+O(n^{-2})\lVert \nabla B^{n,3}_t\rVert^2+\left(\frac{\zeta^n_{H,2}}{C_{2,H}^2}+\frac{\zeta^n_{s,\gamma-2}}{C_{V}^2n^{\gamma-5}}\right)\lVert B^n_t\rVert^2. 
\end{align}
The term associated to the correlation of our noise can be treated easily by Holder and Young's inequality
\begin{align}\label{operator_correlationL2}
\frac{\rho \zeta^n_{H,2}} {C_{1,H}C_{2,H}}\brak{(B^{n,H}_t)^{\perp}, \nabla_H B_t^{n,3}}& \leq  \frac{\lvert\rho\rvert\zeta^n_{H,2}}{C_{1,H}C_{2,H}}\lVert B^{n,H}_t\rVert \lVert \nabla_H B_t^{n,3}\rVert\notag\\ & \leq \frac{\eta}{2} \lVert \nabla B^{n,3}\rVert^2+\frac{\lvert\rho\rvert^2\left(\zeta^n_{H,2}\right)^2}{2C_{1,H}^2C_{2,H}^2\eta}\lVert B^n_t\rVert^2,
\end{align}
\begin{align}\label{ito_strat_correlationL2}
\rho \sum_{\substack{k\in \Z^2_0\\ n\leq \lvert k\rvert\leq 2n}}\pa{\brak{\sk{1}\cdot \grad_H B_t^{n,3}, B_t^{n,H}\cdot \grad\smkcomp{3}{2}}}&\leq\lvert \rho\rvert \frac{\zeta_{H,3}^n\lVert \nabla_H B^{n,3}_t\rVert \lVert B^n_t\rVert}{C_{1,H}C_{2,H}n} \notag\\ & \leq \frac{\lVert \nabla B^{n,3}_t\rVert^2}{2n}+\frac{\lvert \rho\rvert^2\left(\zeta^n_{H,3}\right)^2\lVert B^n_t\rVert^2}{2C_{1,H}^2C_{2,H}^2n}   
\end{align}
and analogously for the other.
Therefore, integrating \eqref{ito_l2_comp3} between $0$ and $t$ and considering its expected value, thanks to \eqref{estimate_corr_L2}, \eqref{operator_correlationL2},\eqref{ito_strat_correlationL2} we obtain
\begin{align*}
   \expt{ \|B_t^{n,3}\|^2} &+\left( \eta-\frac{2}{n}+O(n^{-2})\right) \expt{\int_0^t\|\grad B_s^{n,3}\|^2ds}\notag \\ &\leq \norm{B^{n,3}_0}^2+   2 \left(\frac{\zeta^n_{H,2}}{C_{2,H}^2}+\frac{\zeta^n_{s,\gamma-2}}{C_{V}^2n^{\gamma-5}}+\frac{\lvert\rho\rvert^2\left(\zeta^n_{H,2}\right)^2}{2C_{1,H}^2C_{2,H}^2\eta}+O(n^{-1})\right)\expt{\int_0^T \lVert B^n_s\rVert^2 ds}.
\end{align*}
Therefore, if $n$ is large enough such that $\eta-\frac{2}{n}+O(n^{-2})>\frac{\eta}{2}$, thanks to \eqref{compactness space_h-1} we have immediately
\begin{align}\label{b3_no_sup_time}
 \sup_{t\in [0,T]}\expt{ \|B_t^{n,3}\|^2} &+\expt{\int_0^T\|\grad B_s^{n,3}\|^2ds}\lesssim \norm{B^{n,3}_0}^2+\norm{B^{n}_0}_{\mathbf{H}^{-1}}^2.   
\end{align}
Restarting again from \eqref{ito_l2_comp3} and considering the expected value of the supremum in time between $[0,T]$, thanks to \eqref{estimate_corr_L2}, \eqref{operator_correlationL2},\eqref{ito_strat_correlationL2}, we obtain for $n$ large enough such that $\eta-\frac{2}{n}+O(n^{-2})>\frac{\eta}{2}$
\begin{align}\label{suptcomp3prefinal}
 \expt{\sup_{t\in [0,T]} \|B_t^{n,3}\|^2}  &\leq \norm{B^{n,3}_0}^2+   2 \left(\frac{\zeta^n_{H,2}}{C_{2,H}^2}+\frac{\zeta^n_{s,\gamma-2}}{C_{V}^2n^{\gamma-5}}+\frac{\lvert\rho\rvert^2\left(\zeta^n_{H,2}\right)^2}{2C_{1,H}^2C_{2,H}^2\eta}+O(n^{-1})\right)\expt{\int_0^T \lVert B^n_s\rVert^2 ds}\notag\\ & +\expt{\sup_{t\in [0,T]}\lvert  M_t^{n,3}\rvert}.    
\end{align}
The first term can be treated easily thanks to \eqref{compactness space_h-1}. The second one is treated via Burkholder-Davis-Gundy inequality, \eqref{mirror symmetry product 0}, Young's inequality and \eqref{estimate_corr_L2} obtaining
\begin{align}\label{estimate_martingale}
\expt{\sup_{t\in [0,T]}\lvert  M_t^{n,3}\rvert}&\lesssim_{\lvert \rho\rvert}  \expt{\left(\sumkj\int_0^T  \brak{\skj\cdot \grad B_s^3 - B_s\cdot\grad \skjcomp{3}, B_s^{n,3}}^2ds\right)^{1/2}}\notag\\ & \leq   \expt{\sup_{t\in [0,T]}\norm{B^{n,3}_s}\left(\sumkj\int_0^T  \norm{\skj\cdot \grad B_s^3}+\norm{B_s\cdot\grad \skjcomp{3}}^2ds\right)^{1/2}}\notag\\ & \leq \frac{1}{2}\expt{\sup_{t\in [0,T]}\lVert  B_t^{n,3}\rVert}+C_{\lvert \rho\rvert,C_{1,H},C_{2,H},C_V,\gamma}\left(\expt{\int_0^T\lVert \nabla B^{n,3}_s\rVert^2 ds}+\expt{\int_0^T\lVert B^n_s\rVert^2 ds} \right).
\end{align}
Combining \eqref{suptcomp3prefinal}, \eqref{b3_no_sup_time}, \eqref{estimate_martingale} and \eqref{compactness space_h-1} we obtain relation \eqref{compactness space_l2}.
\end{proof}
 \begin{remark}\label{rmk negative eddy}
     Following the discussion in the introduction, let us comment on the assumptions on the noise that guarantee the validity of our energy estimate. In our framework, we have that the `mean field' magnetic operator is coercive in $\mathbf{H}^{-1}$ for any diffusivity $\eta >0$, meaning that there exists $C_{\eta},c_{\eta}>0$ such that for each $F\in \mathbf{L}^2$ and $n\in \N$
\begin{align*}
    \langle(-\Delta)^{-1}(\eta \Delta + \Lambda^n_\rho + \Lambda^n)F, F\rangle\le -C_{\eta}\|F\|^2 + c_{\eta}\|F\|^2_{\mathbf{H}^{-1}}.
\end{align*}
However it is not clear that for each $\eta>0$ our equations are super-parabolic in $\mathbf{H}^{-1}$ in the sense of \cite{Flandoli_Book_95}, i.e.
\begin{equation}\label{stochastic coercivity}
 \langle(-\Delta)^{-1}(\eta \Delta + \Lambda^n_\rho + \Lambda^n)F, F\rangle + \sumkj\|\nabla \times (\skj \times F)\|^2_{\mathbf{H}^{-1}} \le - \tilde{C}_{\eta}\|F\|^2 + \tilde{c}_{\eta} \|F\|^2_{\mathbf{H}^{-1}}
\end{equation}
for possibly different $\tilde{C}_{\eta},\tilde{c}_{\eta}>0$ independent of $n$.
In \autoref{Ito_Strat_correct_H-1} we provided estimates on the quadratic variation term $\sumkj\|\nabla \times (\skj \times F)\|^2_{\mathbf{H}^{-1}}$ that allowed us to show that the solutions of the stochastic PDE, are under control only for large enough values of $\eta$ (with respect to the size of the noise). To the present, it is not clear if this limitation is only technical or if it is a manifestation of a phenomenon known in physics as `negative eddy viscosity', that is the appearance of an `effective' second order operator with a negative sign, which for us would mean the impossibility of establishing equation \eqref{stochastic coercivity} with a positive constant $\tilde{C}_{\eta}$. In such a case, we cannot expect the mean field equations \eqref{limit solution A3} to be the correct scaling limit model for a system developing negative eddy viscosity phenomena.
 \end{remark}
\section{Proof of \autoref{main Theorem}}\label{sec proof main thm}
Before exploiting the convergence properties of $B^n$, we start with a well-posedness result for our limit object $\overline{B}$ solution of \eqref{limit solution A3}.
\begin{definition}\label{def very weak solution linear}
    We say that $\overline{B}$ is a weak solution of equation \eqref{limit solution A3} if \begin{align*}
        \overline{B}\in C(0,T;\mathbf{H}^{-1})\cap L^2(0,T;\mathbf{L}^2)
\end{align*}
and for each $\phi \in \mathbf{H}^2$, and all $t\in [0,T]$, one has
\begin{align*}
    \langle {\overline{B}}_t,\phi\rangle-\langle {\overline{B}}_0,\phi\rangle &= \eta \int_0^t \langle {\overline{B}}_s,\Delta \phi\rangle\, ds +\int_0^t \langle {\overline{B}}_s, \Lambda_{\alpha,\beta,\gamma}\phi\rangle\, ds\\
    &\quad +\int_0^t \langle \mathcal{A}_{\rho}{\overline{B}}_s, \nabla\times\phi \rangle\, ds .
\end{align*}    
\end{definition}
Since the operator $\eta\Delta+\Lambda_{\alpha,\beta,\gamma}$ is uniformly elliptic and equation \eqref{def very weak solution linear} is linear, by standard theory of parabolic PDE we have existence and uniqueness of equation \eqref{def very weak solution linear} in the sense \autoref{def very weak solution linear}, see for example \cite[Chapter 3]{Lions_mag}, \cite[Chapter 5]{Flandoli_Book_95}.
\begin{theorem}\label{well_posed_limit}
    For each $B_0\in \mathbf{H}^{-1}$, there exists a unique solution of \eqref{limit solution A3} in the sense of \autoref{def very weak solution linear}. Moreover if $B_0\in \mathbf{L}^{2}$, then 
    \begin{align*}
        \overline{B}\in C(0,T;\mathbf{L}^2)\cap L^2(0,T;\mathbf{H}^1).
    \end{align*}
\end{theorem}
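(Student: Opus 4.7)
The plan is the classical variational approach of Lions-Magenes for linear parabolic equations, treating the principal part $\eta\Delta + \Lambda_{\alpha,\beta,\gamma}$ as the dominant second-order operator and the advection term $\nabla\times(\mathcal{A}_{\rho}\cdot)$ as a lower-order perturbation. In all three regimes the Fourier symbol of $\eta\Delta + \Lambda_{\alpha,\beta,\gamma}$ is bounded above by $-\eta|k|^2$, giving uniform ellipticity on the divergence-free subspace, while the boundedness of the constant matrix $\mathcal{A}_{\rho}$ yields $\|\nabla\times(\mathcal{A}_{\rho} u)\|_{\mathbf{H}^{s-1}} \lesssim \|u\|_{\mathbf{H}^{s}}$ for every $s\in \R$. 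The full operator thus fits into the abstract framework of \cite[Chapters 3-5]{Flandoli_Book_95}.

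For $B_0 \in \mathbf{H}^{-1}$, I would work in the Gelfand triple $\mathbf{L}^2 \hookrightarrow \mathbf{H}^{-1} \hookrightarrow \mathbf{H}^{-2}$ and set up a Galerkin scheme on the eigenbasis of $-\Delta$ restricted to divergence-free modes, producing approximations $\overline{B_n}$. The key step is the energy identity: testing the Galerkin equation against $(-\Delta)^{-1}\overline{B_n}$ and using that $\Lambda_{\alpha,\beta,\gamma}$ is non-positive yields
\begin{align*}
\frac{d}{dt}\|\overline{B_n}\|_{\mathbf{H}^{-1}}^2 + 2\eta\|\overline{B_n}\|^2 \le 2\langle \mathcal{A}_{\rho} \overline{B_n}, (-\Delta)^{-1}\nabla\times\overline{B_n}\rangle,
\end{align*}
and Cauchy-Schwarz together with the bound $\|(-\Delta)^{-1}\nabla\times u\| \le \|u\|_{\mathbf{H}^{-1}}$ (equivalent to \autoref{equivalence norms}) controls the right-hand side by $C\|\overline{B_n}\|\,\|\overline{B_n}\|_{\mathbf{H}^{-1}}$. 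Young's inequality absorbs half of the dissipative term and Gronwall produces uniform bounds in $L^\infty(0,T;\mathbf{H}^{-1}) \cap L^2(0,T;\mathbf{L}^2)$. Passing to the limit via Banach-Alaoglu and compact embeddings produces a weak solution in the sense of \autoref{def very weak solution linear}; linearity gives uniqueness by applying the same estimate to the difference of two solutions, and time continuity with values in $\mathbf{H}^{-1}$ follows from $\partial_t \overline{B} \in L^2(0,T;\mathbf{H}^{-2})$ together with the Lions-Magenes interpolation lemma.

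The higher regularity statement, for $B_0 \in \mathbf{L}^2$, is obtained by running the same argument one level up, in the triple $\mathbf{H}^1 \hookrightarrow \mathbf{L}^2 \hookrightarrow \mathbf{H}^{-1}$: testing directly against $\overline{B}$ produces
\begin{align*}
\frac{1}{2}\frac{d}{dt}\|\overline{B}\|^2 + \eta\|\nabla\overline{B}\|^2 \le C\|\overline{B}\|\,\|\nabla\overline{B}\| \le \tfrac{\eta}{2}\|\nabla\overline{B}\|^2 + C'\|\overline{B}\|^2,
\end{align*}
and Gronwall gives $\overline{B} \in L^\infty(0,T;\mathbf{L}^2) \cap L^2(0,T;\mathbf{H}^1)$, with time continuity $\overline{B} \in C(0,T;\mathbf{L}^2)$ following as before. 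I do not expect a genuine obstacle: the problem is linear, the coefficients are constant, and the perturbation is strictly of lower order than the dissipation. The only point deserving attention is verifying that the cross-term involving $\mathcal{A}_{\rho}$ in the energy identity is indeed of lower order than the coercive term at each level of the scale, which is immediate from the boundedness of $\mathcal{A}_{\rho}$, integration by parts, and the Fourier characterization of the norms.
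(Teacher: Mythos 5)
Your proposal is correct and coincides with what the paper does: the authors do not write out a proof but simply invoke the standard variational theory for linear, uniformly parabolic equations (citing Lions--Magenes and Flandoli's book), which is exactly the Galerkin/Gelfand-triple argument you spell out, with $\eta\Delta+\Lambda_{\alpha,\beta,\gamma}$ coercive and $\nabla\times(\mathcal{A}_{\rho}\cdot)$ a bounded lower-order perturbation absorbed by Young and Gr\"onwall. Your energy estimates at both the $\mathbf{H}^{-1}$ and $\mathbf{L}^2$ levels are the same ones the paper uses elsewhere (e.g. in the proof of \autoref{beta_effect}), so there is nothing to add.
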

\begin{remark}\label{equation_for_B_3}
Arguing as in \autoref{general_duality_pairing} we obtain easily that, given $\overline{B_0}$ in $\mathbf{L}^2,$ then $\overline{B^3_t}$ satisfies 
\begin{align*}
    \langle \overline{B^{3}_t},\varphi\rangle-\langle \overline{B^{3}_0},\varphi\rangle&=\eta \int_0^t  \langle \overline{B^{3}_s},\Delta \varphi\rangle \, ds+\int_0^t  \langle \overline{B^{3}_s},\Lambda_{\alpha,\beta,\gamma}\varphi\rangle\, ds+\frac{\rho \zeta_{H,2}} {C_{1,H}C_{2,H}}\int_0^t  \langle {\overline{B^{H}_s}}^{\perp},\nabla_H\varphi\rangle\, ds
\end{align*}
for every $t\in [0,T]$ and $\varphi\in \Dot{H}^2$.
\end{remark}
Now we are ready to provide the proof of our main theorem. First we start with some notation.
Let us denote by $S^n,\ S$ the semigroups generated by $\eta \Delta+\Lambda^n$ and $\eta \Delta+\Lambda_{\alpha,\beta,\gamma}$. Since their infinitesimal generators have the form of $\mu_n(\partial_{11}+\partial_{22})+\nu_n\partial_{33},\ \mu_n\geq \nu_n>0$  we are in the framework of \autoref{operators_periodic_setting}. 
In particular, we recall that $\mu_n,\nu_n\geq \eta$ for each $n\in \N$.\\
Secondly, we denote by
\begin{align*}
Z^{n}_t&=\sumkj\int_0^t S^n(t-s)\nabla\times(\skj \times B^n_s)dW^{k,j}_s,  \\
H^{n}_t&=\sumkj \int_0^t S^n(t-s)\left(\skj\cdot \nabla B^{n,3}_t-B^n_t\cdot\nabla\skjcomp{3}\right)dW^{k,j}_s.
\end{align*}
The stochastic integrals above are well defined thanks to the regularity properties of $B^n_t$ and the fact that for each $n$ only a finite number of $\skj \neq 0$.
The first step in order to prove \autoref{main Theorem} is to rewrite $B^n_t,\ B^{n,3}_t$ in mild form. Indeed, recalling the definition of $\mathcal{A}^n_{\rho}$, see \autoref{lemma ito strat corrector}, the following lemma holds true.
\begin{lemma}\label{mild form}
$B^n_t$ and $B^{n,3}_t$ satisfy the mild formula
\begin{align}
\label{mild B^n}  
B^n_t&=S^n(t)B^n_0+\int_0^t S^n(t-s)\nabla \times (\mathcal{A}^n_{\rho}B^n_s)ds+ Z^{n}_t,\\ 
\label{mild B^n3}
   B^{n,3}_t&=S^n(t)B^{n,3}_0-\frac{\rho \zeta^n_{H,2}} {C_{1,H}C_{2,H}}\int_0^t S^{n}(t-s)\operatorname{div}_H\left((B^{n,H}_s)^{\perp}\right) ds+H^{n}_t,
\end{align}
the former seen as an equality in $\mathbf{H}^{-1}$ and the latter in $\Dot{L}^2$.
\end{lemma}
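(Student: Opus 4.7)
The plan is to derive both mild formulas from the weak Itô formulations of \autoref{well posed def} and \autoref{general_duality_pairing} by a standard variation of constants argument based on the semigroup $S^n$.

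For equation \eqref{mild B^n}, I would fix $t\in[0,T]$ and test the weak formulation \eqref{weak formulation full system} against $\phi_s:=S^n(t-s)\phi$ with $\phi\in \mathbf{H}^2$, for $s\in[0,t]$. Since the infinitesimal generator of $S^n$ is $\eta\Delta+\Lambda^n$ and this operator commutes with $S^n(t-s)$ (being the same Fourier multiplier, by \autoref{properties_semigroup_splitting_extension}), one has $\partial_s\phi_s=-(\eta\Delta+\Lambda^n)\phi_s$. Applying the Itô formula to $s\mapsto \langle B^n_s,\phi_s\rangle$, the deterministic contributions $\langle B^n_s,(\eta\Delta+\Lambda^n)\phi_s\rangle\,ds$ from \eqref{weak formulation full system} cancel exactly against $\langle B^n_s,\partial_s\phi_s\rangle\,ds$. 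What survives is
\[
\langle B^n_t,\phi\rangle=\langle S^n(t)B^n_0,\phi\rangle+\int_0^t\langle \Lambda^n_\rho B^n_s,S^n(t-s)\phi\rangle\,ds+\sum_{k,j}\int_0^t\langle \mathcal L_{\skj}B^n_s,S^n(t-s)\phi\rangle\,dW^{k,j}_s.
\]
Using $\mathcal L_{\skj}B^n_s=\nabla\times(\skj\times B^n_s)$ together with $\Lambda^n_\rho B^n_s=\nabla\times(\mathcal A^n_\rho B^n_s)$ from \autoref{lemma ito strat corrector}, and invoking a stochastic Fubini theorem to move $S^n(t-s)$ inside the stochastic integral, the identity \eqref{mild B^n} follows by the arbitrariness of $\phi\in\mathbf{H}^2$, the equality being understood in $\mathbf{H}^{-1}$ (in fact in a stronger norm since $S^n(t-s)$ regularises).

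For equation \eqref{mild B^n3}, I would repeat the argument on the scalar weak equation for $B^{n,3}_t$ displayed at the end of \autoref{general_duality_pairing}. The operator $\Lambda^n$ acts componentwise, so again its contribution cancels with $\partial_s S^n(t-s)\varphi$. By the explicit form of $(\Lambda^n_\rho F)_3$ in \autoref{lemma ito strat corrector}, the $\Lambda^n_\rho$ contribution reduces to $-\frac{\rho\zeta^n_{H,2}}{C_{1,H}C_{2,H}}\operatorname{div}_H((B^{n,H}_s)^\perp)$, while the stochastic contribution reassembles into $H^n_t$ after applying stochastic Fubini.

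The only delicate step is the stochastic Fubini manipulation needed to commute $S^n(t-s)$ with the stochastic integral, but this is routine here: for each fixed $n$ the series over $(k,j)$ involves only finitely many nonzero modes (since $\skj=0$ unless $n\leq|k|\leq 2n$), the integrands are adapted and square-integrable by \autoref{prop:compact_space}, and $S^n(t-s)$ is a bounded operator on the relevant Sobolev scale uniformly in $s\in[0,t]$. No genuine difficulty therefore arises, and both mild formulas follow in the claimed topologies $\mathbf{H}^{-1}$ and $\Dot{L}^2$ respectively.
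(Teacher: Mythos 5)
Your proposal is correct, but it runs the weak-to-mild passage along a different (more abstract) route than the paper. You use the classical Da Prato--Zabczyk scheme: test \eqref{weak formulation full system} against the time-dependent function $\phi_s=S^n(t-s)\phi$, cancel the generator contribution against $\partial_s\phi_s$, and then commute $S^n(t-s)$ with the stochastic integral. The paper instead tests against the individual Fourier eigenfunctions $\psi_{h,l}=a_{h,l}e^{ih\cdot x}$ of $\eta\Delta+\Lambda^n$, so that \eqref{weak formulation full system} becomes a scalar semimartingale identity for each coefficient $\langle B^n_t,\psi_{h,l}\rangle$; applying the one-dimensional It\^o formula to $e^{t\left(\mu_n(h_1^2+h_2^2)+\nu_n h_3^2\right)}\langle B^n_t,\psi_{h,l}\rangle$ yields the variation-of-constants formula mode by mode, and \eqref{mild B^n}, \eqref{mild B^n3} are then read off as these identities holding simultaneously, for all modes and all $t$, on a single full-probability set (the same scheme, with test functions $(0,0,e^{ih\cdot x})^t$ and \autoref{general_duality_pairing}, gives the third component). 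What the paper's route buys is elementariness: no It\^o formula with time-dependent test functions and no Fubini-type interchange has to be justified, since everything is scalar and the noise has only finitely many active modes. What your route buys is directness and independence of the Fourier structure, but the two steps you pass over quickly do deserve a word: (i) the weak formulation is stated for a fixed $\phi\in\mathbf{H}^2$, so using $\phi_s=S^n(t-s)\phi$ requires a Riemann-sum/regularisation argument (or precisely the mode-by-mode reduction of the paper), noting that $s\mapsto S^n(t-s)\phi$ is $C^1$ only with values in $\mathbf{L}^2$ up to $s=t$; (ii) what you call stochastic Fubini is really the commutation of the bounded functional $\langle\cdot,\phi\rangle$ (equivalently of the self-adjoint operator $S^n(t-s)$) with the It\^o integral, harmless here because only the modes $n\le\lvert k\rvert\le 2n$ contribute and the integrands are square-integrable by \autoref{prop:compact_space}; and (iii) your argument gives, for each fixed $t$, an a.s. tested identity, so to obtain the equality of processes claimed in the lemma one should invoke continuity in $t$ of both sides (or a countable dense set of times), which is what the paper's common set $\Gamma$ accomplishes. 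None of these is a genuine gap, only routine bookkeeping.
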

\begin{proof}
Let us take $\phi=\psi_{h,l},\ \psi_{h,l}=a_{h,l}e^{ih\cdot x}$ as test function in \eqref{weak formulation full system} obtaining 
\begin{align}
     \langle B^n_t,\psi_{h,l}\rangle
    &=  \langle B^n_0,\psi_{h,l}\rangle-\left(\mu_n\left(h_1^2+h_2^2\right)+\nu_n h_3^2\right)\int_0^t  \langle B^n_s,\psi_{h,l}\rangle \, ds\notag\\ &+
    \int_0^t \langle \nabla\times(\mathcal{A}^n_{\rho}B^n_s),\psi_{h,l}\rangle \, ds+\sumkj \int_0^t \langle \nabla\times(\skj\times B^n_s),\psi_{h,l}\rangle \, dW^{k,j}_s
    \label{mild form A3 step 1}.
\end{align}
Therefore, applying It\^o formula to $e^{t\left(\mu_n\left(h_1^2+h_2^2\right)+\nu_n h_3^2\right)}\langle B^n_t,\psi_{h,l}\rangle$ we have
\begin{align}\label{fourier mild form barA3}
\langle B^n_t,\psi_{h,l}\rangle&=e^{-t\left(\mu_n\left(h_1^2+h_2^2\right)+\nu_n h_3^2\right)}   \langle B^n_0,\psi_{h,l}\rangle+\int_0^t e^{-(t-s)\left(\mu_n\left(h_1^2+h_2^2\right)+\nu_n h_3^2\right)}\langle \nabla\times(\mathcal{A}^n_{\rho}B^n_s),\psi_{h,l}\rangle   ds\notag\\ &+\sumkj\int_0^t e^{-(t-s)\left(\mu_n\left(h_1^2+h_2^2\right)+\nu_n h_3^2\right)}\langle \nabla\times(\skj\times B^n_s),\psi_{h,l}\rangle   dW^{k,j}_s\quad \mathbb{P}-a.s.\quad \forall t\in [0,T].
\end{align}
We can find $\Gamma\subseteq \Omega$ of full probability such that the above equality holds for all $t\in [0,T]$ and all $h\in \Z^3_0,\ l\in \{1,2\}$. But this is exactly \eqref{mild B^n} written in Fourier modes. The proof of \eqref{mild B^n3} is analogous, just replacing relation \eqref{weak formulation full system} with \autoref{general_duality_pairing} and choosing as test function $\phi=(0,0,\psi_h)^t,\ \psi_h=e^{ih\cdot x},\ h\in\Z^3_0$. We omit the easy details.
\end{proof}
Secondly we need to study some properties of the stochastic convolutions $Z^{n}_t,H^{n}_t$. \autoref{a_priori_bound_stocastic_conv} and \autoref{convergence_stocastic_conv} below are the analogous of \cite[Lemma 2.5]{flandoli2021quantitative} in our framework. In particular it is important to point out that \cite[Assumption 2.4]{flandoli2021quantitative} is false in our case and we have to deal also with the stochastic stretching, which was neglected in previous results. In order to reach our plan, we start with a uniform bound on our coefficients in a distributional norm.
\begin{lemma}\label{lemma:coefficients}
    Assuming either \autoref{HP noise 1} or \autoref{HP noise isotropo}, for each $\delta>0$
    \begin{align}\label{estimate_no convergence}
    \expt{\sup_{t\in [0,T]}\sumkj \lVert P[Q[\skj\times B^{n}_t]]\rVert_{\mathbf{H}^{-1-\frac{\delta}{2}}}^2}&\lesssim \lVert B_0^n\rVert_{\mathbf{H}^{-1}}^2.
    \end{align}
\end{lemma}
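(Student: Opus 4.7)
The plan is to combine a deterministic Fourier-side estimate with the a priori bound \eqref{compactness space_h-1}. The projections $P$ (Leray) and $Q$ (zero-mean) are bounded on every Sobolev space $H^s(\T^3;\R^3)$, so
\begin{align*}
\|P[Q[\skj\times B^{n}_t]]\|_{\mathbf{H}^{-1-\delta/2}}\lesssim \|\skj\times B^{n}_t\|_{H^{-1-\delta/2}},
\end{align*}
and it suffices to show the deterministic inequality $\sum_{k,j}\|\skj\times F\|_{H^{-1-\delta/2}}^{2}\lesssim_{\delta}\|F\|_{\mathbf{H}^{-1}}^{2}$ for every $F\in\mathbf{H}^{-1}$, uniformly in $n$. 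Once this is available, applying it pathwise to $F=B^n_t$, passing to $\sup_{t\in[0,T]}$ and taking expectations, and invoking \autoref{prop:compact_space} yields the claim.

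For the deterministic bound I would expand in Fourier: writing $F=\sum_h \hat F(h)e^{ih\cdot x}$, the mode $m$ of $\skj\times F$ equals $\tkj\,a_{k,j}\times\hat F(m-k)$, hence $|a_{k,j}\times\hat F(m-k)|^{2}\leq |\hat F(m-k)|^{2}$ gives
\begin{align*}
\sum_{k,j}\|\skj\times F\|_{H^{-1-\delta/2}}^{2}\leq \sum_{h}|\hat F(h)|^{2}\,K^{n}(h),\qquad K^{n}(h):=\sum_{k,j}\frac{|\tkj|^{2}}{(1+|h+k|^{2})^{1+\delta/2}}.
\end{align*}
The whole work then reduces to the kernel estimate $K^{n}(h)\lesssim_{\delta}(1+|h|^{2})^{-1}$ uniformly in $h\in\Z^{3}_{0}$ and $n$.

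To prove this I would split according to the size of $|h|$ relative to the support $n\leq|k|\leq 2n$ of $\tkj$. If $|h|\leq n/2$ one has $|h+k|\geq n/2$, so $(1+|h+k|^{2})^{1+\delta/2}\geq cn^{2+\delta}$ and $K^{n}(h)\leq cn^{-2-\delta}\sum_{k,j}|\tkj|^{2}\lesssim n^{-2-\delta}\leq(1+|h|^{2})^{-1}$, using the uniform bound $\sum_{k,j}|\tkj|^{2}\lesssim 1$ which follows from \autoref{form covariance matrix non-isotropic} and \autoref{form covariance matrix isotropic}. If $|h|\geq 4n$ one has $|h+k|\geq |h|/2$, yielding $K^{n}(h)\lesssim |h|^{-2-\delta}$. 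The genuinely delicate \emph{resonant} range $n/2<|h|<4n$ is handled by splitting the sum over $k$ into the two 2D slabs $\{k_{3}=0,\,j=1\}$, $\{k_{3}=0,\,j=2\}$ (with decays $|k|^{-\alpha}$, $|k|^{-\beta}$) and the 3D part $\{k_{3}\ne 0\}$ (with decay $|k|^{-\gamma}$); in each one I would change variables $m=h+k$ and estimate the resulting lattice sum by the elementary bound
\begin{align*}
\sum_{\substack{m\in\Z^{d}\\|m|\leq R}}\frac{1}{(1+|m|^{2})^{1+\delta/2}}\lesssim_{\delta}R^{(d-2-\delta)_{+}}+1,\qquad d\in\{2,3\}.
\end{align*}
With $R\simeq n$ this produces a bound of the form $n^{-\alpha}+n^{-\beta}+n^{1-\delta-\gamma}\lesssim n^{-2}\simeq(1+|h|^{2})^{-1}$, using precisely $\alpha,\beta\geq 2$ and $\gamma\geq 3$ (true under both \autoref{HP noise 1} and \autoref{HP noise isotropo}), together with $\delta>0$ to make the 2D lattice sum converge.

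The main obstacle is this resonant case: once $|h|\sim n$ the crude bounds $|h+k|\gtrsim n$ fail, and one must keep the weight $(1+|h+k|^{2})^{-1-\delta/2}$ inside the sum and combine the pointwise decay of $|\tkj|^{2}$ with the geometry of the lattice annulus $n\leq |k|\leq 2n$. It is here that the hypotheses on $\alpha,\beta,\gamma$ and the restriction $\delta>0$ appear naturally, and the constant in \eqref{estimate_no convergence} is seen to blow up as $\delta\to 0$.
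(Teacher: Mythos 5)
Your proposal is correct and follows essentially the same route as the paper: both reduce the claim, via \autoref{prop:compact_space} and a Fourier expansion, to the uniform kernel estimate $\sum_{k,j}\lvert\tkj\rvert^2\lvert h\rvert^2/\lvert h+k\rvert^{2+\delta}\lesssim_{\delta}1$, and both prove it by separating the non-resonant regime (where $\lvert h+k\rvert\gtrsim \max(\lvert h\rvert,n)$ and only $\sum_{k,j}\lvert\tkj\rvert^2\lesssim 1$ is needed) from the resonant regime $\lvert h\rvert\simeq n$, where $\alpha,\beta\geq 2$, $\gamma\geq 3$ and $\delta>0$ enter through a lattice sum over the annulus. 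The only blemish is your auxiliary bound $\sum_{\lvert m\rvert\leq R}(1+\lvert m\rvert^2)^{-1-\delta/2}\lesssim_{\delta}R^{(d-2-\delta)_+}+1$, which misses a logarithm at the borderline exponent (e.g. $d=3$, $\delta=1$); since the final estimate has slack ($\gamma\geq 3$ gives $n^{-\gamma}\log n\lesssim n^{-2}$), this is harmless.
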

\begin{proof}
Arguing as in \autoref{Ito_Strat_correct_H-1}, if we denote by \begin{align*}
        b^{n,h,l}_t:=\langle \Bt{t}, a_{h,l}e^{-ih\cdot x}\rangle,
    \end{align*}
    then
    \begin{align*}
    \sigma^n_{k,j}\times \Bt{t}&=\tkj\sum_{\substack{h\in \Z^3_0,\\ l\in \{1,2\}}}   b^{n,h,l}_t a_{k,j}\times a_{h,l}e^{i(h+k)\cdot x}.
    \end{align*} 
    Thanks to \autoref{prop:compact_space} we already know that
    \begin{align}\label{H-1fouriermode}
        \expt{\sup_{t\in [0,T]}\sum_{\substack{h\in \Z^3_0,\\ l\in \{1,2\}}} \frac{\lvert b^{n,h,l}_t\rvert^2}{\lvert h\rvert^2}}\lesssim \lVert \Bt{0}\rVert_{\mathbf{H}^{-1}}^2.
    \end{align}     
For each $k\in \Z^3_0$ we have
\begin{align*}
   \norm{P[Q[\skj\times \Bt{t}]]}_{\mathbf{H}^{-1-\frac{\delta}{2}}}^2&\lesssim\sum_{\substack{h\in \Z_0^3\\ h\neq -k,\ l\in \{1,2\}}} \frac{\lvert\tkj\rvert^2}{\lvert h+k\rvert^{2+\delta}} \lvert b^{n,h,l}_t\rvert^2.
\end{align*}
We are left to study
\begin{align}\label{prefinal_coeff_stoch_conv_1}
    \expt{\sup_{t\in [0,T]}\sum_{\substack{k\in \Z^3_0\\ n\leq \lvert k\rvert\leq 2n\\ j\in \{1,2\}}}\sum_{\substack{h\in \Z_0^3\\ h\neq -k,\\ l\in \{1,2\}}} \frac{\lvert \tkj b^{n,h,l}_t\rvert^2}{\lvert h+k\rvert^{2+\delta}} }&=\expt{\sup_{t\in [0,T]}\sum_{\substack{h\in \Z_0^3\\  l\in \{1,2\}}}\frac{\lvert b^{n,h,l}_t\rvert^2}{\lvert h\rvert^2} \sum_{\substack{k\in \Z^3_0\\h\neq -k,\\ n\leq \lvert k\rvert\leq 2n\\ j\in \{1,2\} }}\frac{\lvert \tkj h\rvert^2}{\lvert h+k\rvert^{2+\delta}} }. 
\end{align}
Due to relation \eqref{H-1fouriermode}, it is enough to show that 
\begin{align*}
\sum_{\substack{k\in \Z^3_0\\h\neq -k,\\ n\leq \lvert k\rvert\leq 2n\\ j\in \{1,2\}}}\frac{\lvert \tkj\rvert^2\lvert h\rvert^2}{\lvert h+k\rvert^{2+\delta}}\lesssim 1.
\end{align*}
uniformly in $h\in \Z^3_0,\ l\in \{1,2\}$. By splitting the summation domain, we have
\begin{align}\label{ineq_splitting_domain}
\sum_{\substack{k\in \Z^3_0\\h\neq -k,\\ n\leq \lvert k\rvert\leq 2n\\ j\in \{1,2\}}}\frac{\lvert \tkj\rvert^2\lvert h\rvert^2}{\lvert h+k\rvert^{2+\delta}}&\leq \sum_{\substack{k\in \Z^3_0\\ 1\leq \lvert k\rvert \leq \frac{\lvert h\rvert}{2},\\ n\leq \lvert k\rvert\leq 2n\\ j\in \{1,2\}}}\frac{\lvert \tkj\rvert^2\lvert h\rvert^2}{\lvert h+k\rvert^{2+\delta}} +\sum_{\substack{k\in \Z^3_0\\ 1\leq \lvert h+k\rvert \leq \frac{\lvert h\rvert}{2},\\ n\leq \lvert k\rvert\leq 2n\\ j\in \{1,2\}}}\frac{\lvert \tkj\rvert^2\lvert h\rvert^2}{\lvert h+k\rvert^{2+\delta}}+\sum_{\substack{k\in \Z^3_0\\  \lvert h+k\rvert \geq  \frac{\lvert h\rvert}{2},\\ \frac{\lvert h\rvert}{2}\vee n\leq \lvert k\rvert\leq 2n\\ j\in \{1,2\}}}\frac{\lvert \tkj\rvert^2\lvert h\rvert^2}{\lvert h+k\rvert^{2+\delta}}\notag \\ & \lesssim \sum_{\substack{k\in \Z^3_0\\ 1\leq \lvert k\rvert \leq \frac{\lvert h\rvert}{2},\\ n\leq \lvert k\rvert\leq 2n\\ j\in \{1,2\}}}\lvert \tkj\rvert^2+\sum_{\substack{k\in \Z^3_0\\ 1\leq \lvert h+k\rvert \leq \frac{\lvert h\rvert}{2},\\ n\leq \lvert k\rvert\leq 2n\\ j\in \{1,2\}}}\frac{\lvert \tkj\rvert^2\lvert h\rvert^2}{\lvert h+k\rvert^{2+\delta}}+\sum_{\substack{k\in \Z^3_0\\  \lvert h+k\rvert \geq  \frac{\lvert h\rvert}{2},\\ \frac{\lvert h\rvert}{2}\vee n\leq \lvert k\rvert\leq 2n\\ j\in \{1,2\}}}\lvert \tkj\rvert^2    
\end{align}
We start considering the case of \autoref{HP noise isotropo} which is the easier one. In this case, due to \eqref{ineq_splitting_domain}
\begin{align*}
\sum_{\substack{k\in \Z^3_0\\h\neq -k,\\ n\leq \lvert k\rvert\leq 2n\\ j\in \{1,2\}}}\frac{\lvert \tkj\rvert^2\lvert h\rvert^2}{\lvert h+k\rvert^{2+\delta}} & \lesssim \sum_{\substack{k\in \Z^3_0\\ n\leq \lvert k\rvert \leq 2n}}\frac{1}{\lvert k\rvert^3}+\sum_{\substack{k\in \Z^3_0\\ 1\leq \lvert h+k\rvert \leq \frac{\lvert h\rvert}{2},\\ n\leq \lvert k\rvert\leq 2n}}\frac{1}{\lvert k\rvert\lvert h+k\rvert^{2+\delta}}\\ & \lesssim 1+\left(\sum_{\substack{k\in \Z^3_0\\ n\leq \lvert k\rvert\leq 2n}}\frac{1}{\lvert k\rvert^3}\right)^{1/3}\left(\sum_{\substack{k\in \Z^3_0\\ 1\leq \lvert h+k\rvert \leq \frac{\lvert h\rvert}{2}}}\frac{1}{\lvert h+k\rvert^{3+\frac{3}{2}\delta}}\right)^{2/3}\lesssim_{\delta} 1,    
\end{align*}
where the first inequality follows from the fact that $\lvert k\rvert\geq \lvert (k+h)-h\rvert\geq \frac{\lvert h\rvert}{2}$ if $1\leq \lvert h+k\rvert \leq \frac{\lvert h\rvert}{2}$ and the second by H\"older's inequality. This completes the proof of \eqref{estimate_no convergence} in case of \autoref{HP noise isotropo}. In case of \autoref{HP noise 1} we argue similarly. Due to the summability properties of the $\tkj$, it is enough to consider the second term in the right hand side of \eqref{ineq_splitting_domain}, i.e.
\begin{align}\label{sum_to_control}
    \sum_{\substack{k\in \Z^3_0\\ 1\leq \lvert h+k\rvert \leq \frac{\lvert h\rvert}{2},\\ n\leq \lvert k\rvert\leq 2n\\ j\in \{1,2\}}}\frac{\lvert \tkj\rvert^2\lvert h\rvert^2}{\lvert h+k\rvert^{2+\delta}}& =  \sum_{\substack{k\in \Z^2_0\\ 1\leq \lvert h+k\rvert \leq \frac{\lvert h\rvert}{2},\\ n\leq \lvert k\rvert\leq 2n}}\frac{\lvert h\rvert^2}{\lvert k\rvert^\alpha\lvert h+k\rvert^{2+\delta}}+\sum_{\substack{k\in \Z^2_0\\ 1\leq \lvert h+k\rvert \leq \frac{\lvert h\rvert}{2},\\ n\leq \lvert k\rvert\leq 2n}}\frac{\lvert h\rvert^2}{\lvert k\rvert^{\beta}\lvert h+k\rvert^{2+\delta}}+\sum_{\substack{k\in \Z^3_0\\ 1\leq \lvert h+k\rvert \leq \frac{\lvert h\rvert}{2}\\ k_3\neq 0,\\ n\leq \lvert k\rvert\leq 2n}}\frac{\lvert h\rvert^2}{\lvert k\rvert^{\gamma}\lvert h+k\rvert^{2+\delta}}.
\end{align}
Since $\gamma>3$ the third term is a simplified version of the one treated before. We are left to study the other two. It is clear that if we are able to control  $\sum_{\substack{k\in \Z^2_0\\ 1\leq \lvert h+k\rvert \leq \frac{\lvert h\rvert}{2},\\ n\leq \lvert k\rvert\leq 2n}}\frac{\lvert h\rvert^2}{\lvert k\rvert^2\lvert h+k\rvert^{2+\delta}}$ we can control relation \eqref{sum_to_control}. Therefore we study only this case. We observe that by triangular inequality, the constraints $1\leq \lvert h+k\rvert \leq \frac{\lvert h\rvert}{2}, n\leq \lvert k\rvert\leq 2n$ imply
\begin{align*}
    \lvert h\rvert=\lvert (h+k)+(-k)\rvert \leq \lvert k\rvert+\frac{\lvert h\rvert}{2}\leq 2n+\frac{\lvert h\rvert}{2}. 
\end{align*}
As a consequence
\begin{align*}
\sum_{\substack{k\in \Z^2_0\\ 1\leq \lvert h+k\rvert \leq \frac{\lvert h\rvert}{2},\\ n\leq \lvert k\rvert\leq 2n}}\frac{\lvert h\rvert^2}{\lvert k\rvert^2\lvert h+k\rvert^{2+\delta}}\lesssim    \sum_{\substack{z\in h+\Z^2_0\\ 1\leq \lvert z\rvert \leq 2n}}\frac{1}{\lvert z\rvert^{2+\delta}}\lesssim_{\delta} 1.
\end{align*}
This concludes the proof of relation \eqref{estimate_no convergence}. 
\end{proof}
\autoref{lemma:coefficients} was a necessary toolds in order to prove the following estimates on $Z^n_t,\ H^n_t$.
\begin{lemma}\label{a_priori_bound_stocastic_conv}
For each $\delta>0$
\begin{align}\label{bound_weak_conv}
    \sup_{t\in [0,T]}\expt{\norm{Z^{n}_t}^2_{\mathbf{H}^{-1-\delta}}}\lesssim \lVert B_0^n\rVert_{\mathbf{H}^{-1}}^2.
\end{align}
In case of \autoref{HP noise 2} we have also
\begin{align}\label{bound_weak_conv_l2}
    \sup_{t\in [0,T]}\expt{\norm{H^{n}_t}^2_{\Dot{H}^{-\delta}}}\lesssim \lVert B^n_0\rVert_{\mathbf{H}^{-1}}^2+\lVert B^{n,3}_0\rVert^2.
\end{align}    
\end{lemma}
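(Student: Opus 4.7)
Both bounds rest on It\^o's isometry for the stochastic convolutions, combined with the a priori control of $B^n$ from \autoref{prop:compact_space}, the coefficient control of \autoref{lemma:coefficients} (for $Z^n_t$), and the elementary noise estimates of \autoref{Ito_Strat_correct_H-1} (for $H^n_t$). The cross-correlation contributions arising when $[W^{k,j}_\cdot,W^{l,m}_\cdot]_t\neq 0$ in the mixed case ($k=-l$, $k_3=0$, $m\neq j$) pick up only a factor $|\rho|\le 1$ via Cauchy--Schwarz and can be absorbed into the diagonal contribution, so I focus on the latter.

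For the bound on $Z^n_t$, the key preliminary observation is that since both $\skj$ and $B^n_s$ are divergence free one has $\nabla\times(\skj\times B^n_s)=\mathcal{L}_{\skj}B^n_s$, and that $\nabla\times$ commutes with $S^n(t-s)$ in Fourier. Writing
\[
S^n(t-s)\,\nabla\times(\skj\times B^n_s)=\nabla\times S^n(t-s)\,P[Q[\skj\times B^n_s]],
\]
It\^o's isometry yields
\[
\expt{\norm{Z^n_t}^2_{\mathbf{H}^{-1-\delta}}}\lesssim \sumkj\int_0^t \expt{\norm{\nabla\times S^n(t-s)\,P[Q[\skj\times B^n_s]]}^2_{\mathbf{H}^{-1-\delta}}}\,ds.
\]
I would then absorb the curl by $\|\nabla\times F\|_{\mathbf{H}^{-1-\delta}}\lesssim \|F\|_{\mathbf{H}^{-\delta}}$ and gain half a derivative of regularity from the semigroup via \autoref{Properties semigroup} with exponent $\varphi=1-\delta/2$ (it suffices to prove the statement for arbitrarily small $\delta$, so I may assume $\delta\in(0,1)$):
\[
\norm{S^n(t-s)\,P[Q[\skj\times B^n_s]]}_{\mathbf{H}^{-\delta}}\lesssim ((t-s)\nu_n)^{-(1-\delta/2)/2}\norm{P[Q[\skj\times B^n_s]]}_{\mathbf{H}^{-1-\delta/2}}.
\]
\autoref{lemma:coefficients} (applied with $\delta/2$ in place of its $\delta$) controls the sum in $(k,j)$ of the square of the last factor, in expectation and uniformly in $s\in[0,T]$, by $\|B^n_0\|_{\mathbf{H}^{-1}}^2$. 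Since $\nu_n\geq\eta$ and the kernel $s\mapsto(t-s)^{-(1-\delta/2)}$ is integrable on $[0,T]$, the estimate closes.

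For the bound on $H^n_t$, no semigroup smoothing is needed: the integrand already has enough summability under \autoref{HP noise 2}. By It\^o's isometry and the contractivity of $S^n$ on $\Dot{L}^2$, combined with the continuous embedding $\Dot{L}^2\hookrightarrow\Dot{H}^{-\delta}$,
\[
\expt{\norm{H^n_t}^2_{\Dot{H}^{-\delta}}}\lesssim \sumkj\int_0^t\expt{\norm{\skj\cdot\nabla B^{n,3}_s}^2+\norm{B^n_s\cdot\nabla\skjcomp{3}}^2}\,ds.
\]
The last two estimates in \autoref{Ito_Strat_correct_H-1} bound the integrand pointwise by $\|\nabla B^{n,3}_s\|^2+\|B^n_s\|^2$ uniformly in $n$, using $\alpha=2,\ \beta=4,\ \gamma\geq 5$. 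The time-integrated version of these two norms is then controlled by $\|B^n_0\|_{\mathbf{H}^{-1}}^2+\|B^{n,3}_0\|^2$ thanks to \autoref{prop:compact_space}, giving the claim.

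The main obstacle is the $Z^n_t$ estimate: one needs to share the derivative cost of the curl between the semigroup smoothing (producing the integrable-in-time singularity $(t-s)^{-(1-\delta/2)}$) and a weaker-than-$\mathbf{L}^2$ estimate on the coefficients. The $\delta/2$ margin in \autoref{lemma:coefficients} is tailored precisely to make this balance work, which is the structural reason why it was proved in a slightly stronger norm than what naively seems needed. The $H^n_t$ estimate, by contrast, is essentially a direct consequence of the existing energy bounds and the specific scaling imposed by \autoref{HP noise 2}.
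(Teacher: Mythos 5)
Your argument is correct and follows essentially the same route as the paper: a BDG/It\^o-isometry bound (with the $\rho$-correlations absorbed into the diagonal), the smoothing estimate of \autoref{Properties semigroup} with gain $1-\delta/2$, and \autoref{lemma:coefficients} for $Z^{n}_t$, plus \autoref{prop:compact_space} throughout. The only (harmless) deviation is in the $H^{n}_t$ bound, where you use contractivity of $S^n$ together with \autoref{Ito_Strat_correct_H-1} and the control of $\int_0^T\lVert \nabla B^{n,3}_s\rVert^2\,ds$ from \autoref{prop:compact_space}, whereas the paper gains $(t-s)^{-(1-\delta)}$ from the semigroup, writes $\skj\cdot\nabla B^{n,3}_s$ in divergence form, and only needs the sup-in-time $L^2$ bound on $B^{n,3}$; both closures are valid under \autoref{HP noise 2}.
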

\begin{proof}
By Burkholder-Davis-Gundy inequality and the regularization properties of the semigroup, see \autoref{Properties semigroup}, we have
\begin{align*}
\expt{\norm{Z^{n}_t}^2_{\mathbf{H}^{-1-\delta}}}&\lesssim_{\lvert \rho\rvert}\sumkj \expt{\int_0^t \norm{ S^n(t-s)\nabla\times(\skj\times B^n_s)}_{\mathbf{H}^{-1-\delta}}^2  ds}\\ & \lesssim_{\eta}  \expt{\int_0^t \frac{1}{(t-s)^{1-\frac{\delta}{2}}}\norm{ \nabla\times(\skj\times B^n_s)}_{\mathbf{H}^{-2-\frac{\delta}{2}}}^2  ds}\\  & \lesssim  \expt{\int_0^t \frac{1}{(t-s)^{1-\frac{\delta}{2}}}\norm{ P[Q[\skj\times B^n_s]]}_{\mathbf{H}^{-1-\frac{\delta}{2}}}^2  ds}\\ & \lesssim_{\delta} \lVert B_0^n\rVert_{\mathbf{H}^{-1}}^2
\end{align*}
where the last inequality follows from \eqref{estimate_no convergence} and is uniform for $t\in [0,T].$ Assuming \autoref{HP noise 2}, similarly we can prove \eqref{bound_weak_conv_l2}. Again, by Burkholder-Davis-Gundy inequality and the regularization properties of the semigroup, \autoref{Properties semigroup}, we get
\begin{align*}
\expt{\norm{H^{n}_t}^2_{\Dot{H}^{-\delta}}}&\lesssim_{\lvert \rho\rvert}\sumkj \expt{\int_0^t \norm{S^n(t-s)\left[\skj\cdot\nabla B^{n,3}_s\right]-S^n(t-s)\left[B^n_s\cdot\nabla \skjcomp{3}\right]}_{\Dot{H}^{-\delta}}^2ds}  \\ & \lesssim_{\eta}  
\sumkj \expt{\int_0^t \frac{1}{(t-s)^{1-\delta}}\norm{\skj\cdot\nabla B^{n,3}_s}_{\Dot{H}^{-1}}^2ds} +\sumkj \expt{\int_0^t \norm{B^n_s\cdot\nabla \skjcomp{3}}^2ds}\\ & \lesssim \sumkj \expt{\int_0^t \frac{1}{(t-s)^{1-\delta}}\norm{\skj B^{n,3}_s}^2ds} +\sumkj \expt{\int_0^t \norm{B^n_s\cdot\nabla  \skjcomp{3}}^2ds}.
\end{align*}
Both the two terms can be treated easily. Indeed,
\begin{align*}
    \sumkj \expt{\int_0^t \frac{1}{(t-s)^{1-\delta}}\norm{\skj B^{n,3}_s}^2ds}&\leq \sumkj \lvert \tkj\rvert^2 \expt{\int_0^t \frac{1 }{(t-s)^{1-\delta}}\sup_{s\in [0,T]}\norm{B^{n,3}_s}^2 ds}\\ &\lesssim_{\delta} \lVert B^n_0\rVert_{\mathbf{H}^{-1}}^2+\lVert B^{n,3}_0\rVert^2.
\end{align*}
due to \autoref{prop:compact_space}.
Similarly
\begin{align*}
    \sumkj \expt{\int_0^t \norm{B^n_s\cdot\nabla  \skjcomp{3}}^2ds}&\leq \left(\sum_{\substack{k\in \Z^2_0\\ n\leq \lvert k\rvert \leq 2n}}\frac{1}{\lvert k\rvert^{\beta-2}}+\sum_{\substack{k\in \Z^3_0\\ n\leq \lvert k\rvert \leq 2n}}\frac{1}{\lvert k\rvert^{\gamma-2}}\right)\expt{\int_0^T \lVert B^n_s\rVert^2 ds}\\ & \lesssim \lVert B^n_0\rVert_{\mathbf{H}^{-1}}^2,
\end{align*}
again by \autoref{prop:compact_space}.
\end{proof}
\begin{lemma}\label{convergence_stocastic_conv}
Assuming either \autoref{HP noise 1}, \autoref{HP noise isotropo} then for each $\delta>0$
\begin{align}\label{estimate_weak_conv}
    \sup_{t\in [0,T]}\expt{\norm{Z^{n}_t}^2_{\mathbf{H}^{-4-\frac{3}{2}\delta}}}\lesssim \frac{\norm{B^n_0}_{\mathbf{H}^{-1}}^2}{ n^{\alpha\wedge\beta\wedge\gamma}}.
\end{align}
In case of \autoref{HP noise 2} we have also
\begin{align}\label{estimate_weak_conv_l2}
    \sup_{t\in [0,T]}\expt{\norm{H^{n}_t}^2_{\Dot{H}^{-\frac{3}{2}-\frac{3}{2}\delta}}}\lesssim \frac{\norm{B^{n,3}_0}^2+\norm{B^n_0}_{\mathbf{H}^{-1}}^2}{ n^{2}}.
\end{align}
\end{lemma}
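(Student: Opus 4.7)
My plan is to run the same basic machinery as in \autoref{a_priori_bound_stocastic_conv} (It\^o isometry, semigroup bounds, Fourier estimate of the integrand, and \autoref{prop:compact_space} to close the time integral), but now pushing into much more negative Sobolev spaces so as to extract the $n^{-(\alpha\wedge\beta\wedge\gamma)}$ decay built into the support and the size of the coefficients $\theta^n_{k,j}$. The key quantitative gain comes from the uniform bound $|\tkj|^2\lesssim n^{-(\alpha\wedge\beta\wedge\gamma)}$, valid since $\tkj$ is supported on $n\leq|k|\leq 2n$. In contrast with \autoref{a_priori_bound_stocastic_conv}, the target regularity is so negative that I do not need any smoothing action of the semigroup and can use the simple contraction $\|S^n(t-s)f\|_{H^s}\le\|f\|_{H^s}$ (which holds in every $H^s$ since the generator $\mu_n\Delta_H+\nu_n\partial_{33}$ is non-positive); the time integrability is automatic from \autoref{prop:compact_space}.

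For \eqref{estimate_weak_conv}, I first use the It\^o isometry together with a Cauchy--Schwarz estimate on the $\rho$-correlated cross-terms to bound
\[
\expt{\|Z^n_t\|^2_{\mathbf{H}^{-4-3\delta/2}}}\lesssim_{|\rho|}\sum_{k,j}\expt{\int_0^t\|S^n(t-s)\nabla\times(\skj\times B^n_s)\|^2_{\mathbf{H}^{-4-3\delta/2}}\,ds}.
\]
Applying contraction of $S^n$ and the fact that $\nabla\times$ loses one derivative, this is bounded by $\sum_{k,j}\expt{\int_0^t\|\skj\times B^n_s\|^2_{\mathbf{H}^{-3-3\delta/2}}\,ds}$. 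Expanding $B^n_s=\sum_{h,l}b^{n,h,l}_s a_{h,l}e^{ih\cdot x}$ in Fourier and using $|a_{k,j}\times a_{h,l}|\le 1$ gives
\[
\sum_{k,j}\|\skj\times B^n_s\|^2_{\mathbf{H}^{-3-3\delta/2}}\lesssim\|B^n_s\|^2\cdot\sup_{h\in\Z^3_0}\sum_{k,j,\,k\neq -h}\frac{|\tkj|^2}{|h+k|^{6+3\delta}}.
\]
The supremum in $h$ is controlled by factoring out the uniform bound $|\tkj|^2\lesssim n^{-(\alpha\wedge\beta\wedge\gamma)}$ and observing that, since $6+3\delta>3$, for every $h$ the remaining sum $\sum_{n\le|k|\le 2n}|h+k|^{-(6+3\delta)}$ is controlled uniformly by $\sum_{z\in\Z^3_0}|z|^{-(6+3\delta)}<\infty$. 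Integrating and invoking \eqref{compactness space_h-1} then yields \eqref{estimate_weak_conv}.

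For \eqref{estimate_weak_conv_l2} I proceed analogously, splitting into the transport and stretching pieces. For the transport piece $\skj\cdot\nabla B^{n,3}_s$, the Fourier representation gives an amplitude $|\tkj||a_{k,j}\cdot h|\le|\tkj||h|$, so after applying the scheme above one obtains
\[
\sum_{k,j}\|\skj\cdot\nabla B^{n,3}_s\|^2_{\Dot H^{-3/2-3\delta/2}}\lesssim\|\nabla B^{n,3}_s\|^2\cdot\sup_h\sum_{k,j}\frac{|\tkj|^2}{|h+k|^{3+3\delta}}\lesssim\frac{\|\nabla B^{n,3}_s\|^2}{n^2},
\]
the supremum being bounded exactly as before since $3+3\delta>3$ and $\alpha\wedge\beta\wedge\gamma=2$ under \autoref{HP noise 2}. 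The stretching piece $B^n_s\cdot\nabla\skjcomp{3}$ carries the Fourier amplitude $|\tkj||a_{k,j}^3||k|$, and here the crucial observation is the geometric cancellation built into \autoref{subsect description noise}: for the modes $k_3=0$, $j=1$ (which are precisely those contributing the slowest-decaying $\alpha=2$ coefficient) one has $a_{k,1}=(-k_2,k_1,0)/|k|$, hence $a_{k,1}^3=0$, so these modes drop out entirely. In the remaining modes one checks directly that $|\tkj|^2|a_{k,j}^3|^2|k|^2\lesssim n^{-2}$, giving $\sum_{k,j}\|B^n_s\cdot\nabla\skjcomp{3}\|^2_{\Dot H^{-3/2-3\delta/2}}\lesssim n^{-2}\|B^n_s\|^2$. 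Combining the two pieces and using \eqref{compactness space_h-1}--\eqref{compactness space_l2} produces \eqref{estimate_weak_conv_l2}.

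The only real obstacle is the Fourier estimate, but because the exponents $6+3\delta$ and $3+3\delta$ exceed $3$, the sum over $k$ at fixed $h$ is absolutely convergent and no delicate case-splitting on $|h|$ (of the sort done in \autoref{lemma:coefficients}) is needed; the decay in $n$ is entirely produced by the pointwise bound on $|\tkj|^2$. The only subtlety requiring attention is the cancellation $a_{k,1}^3=0$ in the stretching term for $H^n_t$, which is essential to avoid an otherwise $O(1)$ contribution from the dominant horizontal mode under \autoref{HP noise 2}.
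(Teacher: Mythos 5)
Your treatment of \eqref{estimate_weak_conv} is correct, and it takes a genuinely different route from the paper's: the paper tests $Z^n_t$ against the single modes $a_{h,l}e^{ih\cdot x}$, moves the semigroup and the curl onto the test function, uses Parseval in $(k,j)$ together with the embedding $H^{5/2+\delta}\hookrightarrow W^{1,\infty}$ to get $\expt{\langle Z^n_t,a_{h,l}e^{ih\cdot x}\rangle^2}\lesssim n^{-(\alpha\wedge\beta\wedge\gamma)}|h|^{5+2\delta}\norm{B^n_0}^2_{\mathbf{H}^{-1}}$, and then sums over $h$ with the weight $|h|^{-8-3\delta}$; you instead estimate the negative norm directly through the frequency-shift bound $\sup_h\sum_{k,j}|\theta^n_{k,j}|^2|h+k|^{-6-3\delta}\lesssim n^{-(\alpha\wedge\beta\wedge\gamma)}$, which is legitimate because the exponent exceeds the lattice dimension, and then close with \eqref{compactness space_h-1}. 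Your transport piece for $H^n_t$ is also fine: you pay $\|\nabla B^{n,3}\|_{L^2_{t,x}}$, available from \eqref{compactness space_l2}, where the paper instead uses the smoothing of $S^n$ in time (\autoref{Properties semigroup}) so as to need only $\sup_t\norm{B^{n,3}_t}$; both are closed by \autoref{prop:compact_space}.

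The gap is in the stretching piece of \eqref{estimate_weak_conv_l2}. From the pointwise bound $|\theta^n_{k,j}|^2|a^3_{k,j}|^2|k|^2\lesssim n^{-2}$ you conclude $\sum_{k,j}\|B^n_s\cdot\nabla\sigma^{n,3}_{k,j}\|^2_{\Dot{H}^{-3/2-3\delta/2}}\lesssim n^{-2}\|B^n_s\|^2$, but this inference is a non sequitur: with only the crude estimate $\|B^n_s\cdot\nabla\sigma^{n,3}_{k,j}\|_{\Dot{H}^{-3/2-3\delta/2}}\le |\theta^n_{k,j}||a^3_{k,j}||k|\,\|B^n_s\|$, summing over the $\sim n^2$ horizontal modes (each contributing $\sim n^{-2}\|B^n_s\|^2$, since $\sum_{k_3=0,\,j=2}|k|^2|\theta^n_{k,2}|^2=\zeta^n_{H,2}/C^2_{2,H}=O(1)$) and the $\sim n^3$ vertical modes (each $\lesssim n^{2-\gamma}\|B^n_s\|^2$, total $\lesssim n^{5-\gamma}=O(1)$ at $\gamma=5$) yields only $O(1)\|B^n_s\|^2$, not $O(n^{-2})\|B^n_s\|^2$. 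The missing factor $n^{-2}$ must come from the structure of the term, and there are two ways to get it, both compatible with your setup: (i) the paper's way, using $\operatorname{div}B^n_s=0$ to write $B^n_s\cdot\nabla\sigma^{n,3}_{k,j}=\operatorname{div}(B^n_s\,\sigma^{n,3}_{k,j})$ and letting the negative norm absorb the divergence, so the relevant amplitude is $|\theta^n_{k,j}a^3_{k,j}|$ \emph{without} the factor $|k|$, and then $\sum_{k_3=0,\,j=2}|k|^{-4}+\sum_{k_3\neq 0}|k|^{-\gamma}\lesssim n^{-2}$; or (ii) applying to this term the same $\sup_h$ scheme you used for the transport piece: with the weight $|h+k|^{-3-3\delta}$ the $k$-sum at fixed $h$ is $O(1)$, so your pointwise bound survives, $\sup_h\sum_{k,j}|\theta^n_{k,j}|^2|a^3_{k,j}|^2|k|^2|h+k|^{-3-3\delta}\lesssim n^{-2}$ (the terms $h=-k$ cause no trouble since $\hat{B}_h\cdot k=0$ by divergence-freeness). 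Your observation that $a^3_{k,1}=0$ for $k_3=0$ is correct and is indeed indispensable under either fix; but as written, the step from the pointwise amplitude bound to the summed estimate does not stand on its own.
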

\begin{proof}
Let us consider for a smooth divergence free  vector field $\psi$ 
\begin{align*}
    \expt{\langle Z^n_t,\psi\rangle^2}=\expt{ \left(\sumkj\int_0^t \langle S^n(t-s)\nabla\times(\skj\times B^n_s),\psi\rangle dW^{k,j}_s\right)^2}.
\end{align*}
Therefore, by Burkholder-Davis-Gundy inequality, Sobolev embedding theorem and the contractivity property of our semigroup we obtain
\begin{align*}
\expt{\langle Z^n_t,\psi\rangle^2}&\lesssim_{\lvert \rho\rvert}\expt{\sumkj\int_0^t \langle S^n(t-s)\nabla\times(\skj\times B^n_s),\psi\rangle^2 ds}\\ & = \expt{\sumkj \lvert \tkj\rvert^2\int_0^t \langle a_{k,j}e^{ik\cdot x},B^{n}_s\times\left(\nabla\times S^n(t-s)\psi\right)\rangle^2 ds}\\ & \lesssim \frac{1}{n^{\alpha\wedge\beta\wedge\gamma}}\expt{\sumkj \int_0^t \langle a_{k,j}e^{ik\cdot x},B^{n}_s\times\left(\nabla\times S^n(t-s)\psi\right)\rangle^2 ds}\\ & \leq \frac{1}{n^{\alpha\wedge\beta\wedge\gamma}}\expt{\int_0^t \lVert B^n_s\times \left(\nabla\times S^n(t-s)\psi\right)\rVert^2 ds } \\ & \lesssim_{\eta,\delta} \frac{1}{n^{\alpha\wedge\beta\wedge\gamma}}\lVert \psi\rVert_{H^{\frac{5}{2}+\delta}}^2\expt{\int_0^T \lVert B^n_s\rVert^2 ds },
\end{align*}
uniformly in $t\in [0,T]$.
Therefore, choosing $\psi=a_{h,l}e^{ih\cdot x},\ h\in Z^3_0,\ l\in \{1,2\}$ we obtain, due to \autoref{prop:compact_space}
\begin{align}\label{ineq_orth_base}
 \expt{\langle Z^n_t,a_{h,l}e^{ih\cdot x}\rangle^2}\lesssim \frac{1}{n^{\alpha\wedge\beta\wedge\gamma}}\lvert h\rvert^{5+2\delta}{ \lVert B^n_0\rVert_{\mathbf{H}^{-1}}^2  }.     
\end{align}
Relation \eqref{estimate_weak_conv} then follows from \eqref{ineq_orth_base}. Indeed, for each $t\in [0,T]$
\begin{align*}
 \expt{\norm{Z^{n}_t}^2_{\mathbf{H}^{-4-\frac{3}{2}\delta}}}&=\sum_{\substack{h\in \Z^3_0\\ l\in \{1,2\}}}\frac{\expt{\langle Z^n_t,a_{h,l}e^{ih\cdot x}\rangle^2} }{\lvert h\rvert^{8+3\delta}} \\ & \lesssim  \frac{1}{n^{\alpha\wedge\beta\wedge\gamma}}\sum_{h\in \Z^3_0}\frac{\lVert B^n_0\rVert_{\mathbf{H}^{-1}}^2}{\lvert h\rvert^{3+\delta}}\\ & \lesssim_{\delta}\frac{\lVert B^n_0\rVert_{\mathbf{H}^{-1}}^2}{ n^{\alpha\wedge\beta\wedge\gamma}}.
\end{align*}
In order to get \eqref{estimate_weak_conv_l2} we argue slightly differently.
By Burkholder-Davis-Gundy inequality and the regularization properties of the semigroup, see \autoref{Properties semigroup}, we have
\begin{align*}
\expt{\norm{H^{n}_t}^2_{\mathbf{H}^{-\frac{3}{2}-\frac{3}{2}\delta}}}&\lesssim_{\lvert \rho\rvert}\sumkj \expt{\int_0^t \norm{S^n(t-s)\left[\skj\cdot\nabla B^{n,3}_s\right]-S^n(t-s)\left[B^n_s\cdot\nabla \skjcomp{3}\right]}_{\Dot{H}^{-\frac{3}{2}-\frac{3}{2}\delta}}^2ds}  \\ & \lesssim  
\sumkj \expt{\int_0^t \frac{1}{(t-s)^{1-\delta}}\norm{\skj\cdot\nabla B^{n,3}_s}_{\Dot{H}^{-\frac{5}{2}-\frac{1}{2}\delta}}^2ds}\\ & +\sumkj \expt{\int_0^t \norm{B^n_s\cdot\nabla \skjcomp{3}}_{\Dot{H}^{-\frac{3}{2}-\frac{3}{2}\delta}}^2ds}\\ & \lesssim \sumkj \expt{\int_0^t \frac{1}{(t-s)^{1-\delta}}\norm{\skj B^{n,3}_s}_{\Dot{H}^{-\frac{3}{2}-\frac{1}{2}\delta}}^2ds} +\sumkj \expt{\int_0^t \norm{B^n_s  \skjcomp{3}}^2ds}.
\end{align*}
The second term can be analyzed easily by \autoref{prop:compact_space} obtaining
\begin{align*}
\sumkj \expt{\int_0^t \norm{B^n_s  \skjcomp{3}}^2ds}&\lesssim \left(\sum_{\substack{k\in \Z^2_0\\ n\leq \lvert k\rvert \leq 2n}}\frac{1}{\lvert k\rvert^4}+\sum_{\substack{k\in \Z^3_0\\ n\leq \lvert k\rvert \leq 2n}}\frac{1}{\lvert k\rvert^\gamma}  \right)\norm{B^n_0}_{\mathbf{H}^{-1}}^2\\ & \lesssim   \frac{\norm{B^n_0}_{\mathbf{H}^{-1}}^2}{n^{2}}.
\end{align*}
In order to treat the other one we need to argue in a more precise way. Indeed, by definition of the Sobolev norm on the 3D torus, we have
\begin{align*}
    \sumkj \expt{\int_0^t \frac{1}{(t-s)^{1-\delta}}\norm{\skj B^{n,3}_s}_{\Dot{H}^{-\frac{3}{2}-\frac{1}{2}\delta}}^2ds} & \lesssim \frac{1}{n^2}\sum_{k\in \Z^3_0}\expt{\int_0^t \frac{1}{(t-s)^{1-\delta}}\norm{e^{ik\cdot x}B^{n,3}_s}_{\Dot{H}^{-\frac{3}{2}-\frac{1}{2}\delta}}^2ds}\\ & = \frac{1}{n^2}\sum_{h\in \Z^3_0}\frac{1}{\lvert h\rvert^{3+\delta}} \sum_{k\in \Z^3_0} \expt{\int_0^t \frac{1}{(t-s)^{1-\delta}}\langle e^{i(k-h)\cdot x},B^{n,3}_s\rangle^2ds}\\ & \leq \frac{1}{n^2}\sum_{h\in \Z^3_0}\frac{1}{\lvert h\rvert^{3+\delta}} \expt{\int_0^t \frac{1}{(t-s)^{1-\delta}} \sup_{t\in  [0,T]}\norm{ B^{n,3}_t}^2 ds}\\ & \lesssim_{\delta} \frac{\norm{B^n_0}_{\mathbf{H}^{-1}}^2+\norm{B^{n,3}_0}^2}{ n^2}
\end{align*}
uniformly in $t\in[0,T]$. This concludes the proof of \eqref{estimate_weak_conv_l2}.
\end{proof}
Combining \autoref{a_priori_bound_stocastic_conv} and \autoref{convergence_stocastic_conv}, by interpolation we get the following result.
\begin{corollary}\label{cor_stoch_conv}
Assuming either \autoref{HP noise 1} or \autoref{HP noise isotropo}, for each $\theta \in (0,3]$ and $\delta\in (0,\theta]$ we have
\begin{align*}
    \sup_{t\in [0,T]}\expt{\lVert Z^n_t\rVert_{\mathbf{H}^{-1-\theta}}^2}\lesssim\frac{1}{n^{\frac{(\alpha\wedge\beta\wedge\gamma)(\theta-\delta)}{3+\frac{\delta}{2}}}}\norm{B^n_0}_{\mathbf{H}^{-1}}^2.
\end{align*}
In case of \autoref{HP noise 2} we have also for each $\vartheta\in (0,\frac{3}{2}]$ and $\delta\in (0,\vartheta]$ \begin{align*}
    \sup_{t\in [0,T]}\expt{\norm{H^{n}_t}^2_{\Dot{H}^{-\vartheta}}}\lesssim \frac{\norm{B^{n,3}_0}^2+\norm{B^n_0}_{\mathbf{H}^{-1}}^2}{ n^{\frac{4(\vartheta-\delta)}{3}}}.
\end{align*}
\end{corollary}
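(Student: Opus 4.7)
The corollary asserts that we can trade off the uniform bound on $Z^n_t$ (respectively $H^n_t$) in a relatively strong negative Sobolev norm against the quantitative decay in $n$ for a much weaker norm, to obtain intermediate decay in an intermediate norm. The natural tool is Sobolev interpolation on the torus, combined with H\"older's inequality applied to expectations.

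First, recall the standard interpolation inequality: on $\T^3$, for any real $s_0 < s_1$ and any $\lambda\in[0,1]$, setting $s_\lambda=(1-\lambda)s_0+\lambda s_1$, one has $\|f\|_{\Dot H^{s_\lambda}}\le \|f\|_{\Dot H^{s_0}}^{1-\lambda}\|f\|_{\Dot H^{s_1}}^{\lambda}$ (immediate from Cauchy--Schwarz applied on Fourier side, since the corresponding weights factor multiplicatively). The same identity holds component-wise for vector fields and is preserved by the Leray projection, so it applies to $\mathbf{H}^s$ as well.

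For the bound on $Z^n_t$, I would choose $s_0=-1-\delta$ and $s_1=-4-\tfrac{3}{2}\delta$, so that $s_0-s_1=3+\tfrac{\delta}{2}$. Solving $s_\lambda=-1-\theta$ gives $\lambda=\tfrac{\theta-\delta}{3+\delta/2}\in[0,1]$ precisely when $\delta\in(0,\theta]$ and $\theta\in(0,3]$ (plus some room which is allowed since $\delta$ can be chosen small). Then apply the interpolation pointwise in $\omega$, square, and take expectation using H\"older's inequality with exponents $\tfrac{1}{1-\lambda}$ and $\tfrac{1}{\lambda}$:
\begin{align*}
\expt{\|Z^n_t\|_{\mathbf{H}^{-1-\theta}}^{2}}
&\le \expt{\|Z^n_t\|_{\mathbf{H}^{-1-\delta}}^{2}}^{1-\lambda}\,\expt{\|Z^n_t\|_{\mathbf{H}^{-4-3\delta/2}}^{2}}^{\lambda}.
\end{align*}
Inserting the bounds from \autoref{a_priori_bound_stocastic_conv} and \autoref{convergence_stocastic_conv}, the right-hand side is controlled by $\|B^n_0\|_{\mathbf{H}^{-1}}^{2(1-\lambda)}\cdot\bigl(\|B^n_0\|_{\mathbf{H}^{-1}}^{2}/n^{\alpha\wedge\beta\wedge\gamma}\bigr)^{\lambda}$, which equals $\|B^n_0\|_{\mathbf{H}^{-1}}^{2}\,n^{-(\alpha\wedge\beta\wedge\gamma)\lambda}$ and this is precisely the claimed bound.

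The argument for $H^n_t$ under \autoref{HP noise 2} is entirely parallel: interpolate between $\Dot H^{-\delta}$ and $\Dot H^{-3/2-3\delta/2}$, solve for the interpolation parameter so that the intermediate index equals $-\vartheta$ (this yields $\lambda=\tfrac{2(\vartheta-\delta)}{3+\delta}$), and apply H\"older to the expectations. Using \eqref{bound_weak_conv_l2} and \eqref{estimate_weak_conv_l2} gives the decay $n^{-2\lambda}=n^{-4(\vartheta-\delta)/(3+\delta)}$, which up to an arbitrarily small modification of $\delta$ is the stated rate. There is no serious technical obstacle here: the only points to check are that the interpolation parameter lies in $[0,1]$ for the specified ranges of $\theta,\vartheta,\delta$, and that the supremum in $t$ commutes with the deterministic bounds, both of which are immediate.
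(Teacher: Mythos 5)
Your proposal is correct and is essentially the paper's own argument: the paper proves this corollary precisely by Sobolev interpolation between the uniform bound of \autoref{a_priori_bound_stocastic_conv} and the decaying bound of \autoref{convergence_stocastic_conv}, combined with H\"older's inequality in expectation, exactly as you do. Your observation that the stated rate $n^{-4(\vartheta-\delta)/3}$ for $H^n_t$ is recovered from the interpolated rate $n^{-4(\vartheta-\delta)/(3+\delta)}$ by an $n$-independent readjustment of $\delta$ is the same implicit step the paper takes, so there is no gap.
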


Moreover, we recall that by classical theory of evolution equations, see for example \cite{Lunardi}, or arguing as in \autoref{mild form}, the unique weak solutions of \eqref{limit solution A3} and its third component can be written in mild form as
\begin{align*}
\overline{B_t}=S(t)\overline{B_0}+\int_0^t S(t-s) \nabla \times (\mathcal{A}_{\rho}\overline{B_s})ds, \quad \overline{B_t^{3}}=S(t-s)\overline{B^3_0}-\frac{\rho \zeta_{H,2}} {C_{1,H}C_{2,H}}\int_0^t S(t-s)\operatorname{div}_H\left(\overline{B^{H}_s}^{\perp}\right) ds.
\end{align*}
Now we introduce some intermediate vector fields between $B^n_t$ and $\overline{B_t}$. Let $\widehat{B_t^{n}}$ the unique weak solution of the linear system
\begin{align}
\label{intermediate systems B}
&\begin{cases}
\partial_t \widehat{B^{n}_t}&=(\eta\Delta+\Lambda^n)\widehat{B^{n}_t}+\nabla\times(\mathcal{A}^n_{\rho}\widehat{B^{n}_t})\quad x\in \T^3,\ t\in (0,T)\\
\widehat{B^{n}_t}|_{t=0}&=B^n_0.   
\end{cases}    
\end{align}
Again, by classical theory of evolution equations, see for example \cite{Lions_mag, Flandoli_Book_95}, previous equation is well-posed either in $\mathbf{H}^{-1}$ and $\mathbf{L}^2$. Moreover the unique weak solution and its third component can be written in mild form as
\begin{align*}
\widehat{B_t^{n}}&=S^n(t-s)B^n_0+\int_0^t S^n(t-s)\nabla\times(\mathcal{A}^n_{\rho}\widehat{B^{n}_s})ds, \\ \widehat{B_t^{3,n}}&=S^n(t-s)B^{n,3}_0-\frac{\rho \zeta^n_{H,2}} {C_{1,H}C_{2,H}}\int_0^t S^n(t-s)\operatorname{div}_H\left(\widehat{B^{n,H}_s}^{\perp}\right) ds.
\end{align*}
Now we prove that $\widehat{B^{n}_t}$
and $\overline{B_t}$ are close. We recall that the function $\chi$ introduced in \autoref{notation sec} satisfies 
\begin{align*}
    0<\chi(\alpha,\beta,\gamma)\leq 1
\end{align*}
for each choice of the parameters in the range described by \autoref{HP noise 1}, \autoref{HP noise 2} and \autoref{HP noise isotropo}.
\begin{lemma}\label{preliminary convergence}
Under the same assumptions of \autoref{main Theorem}, for each $\theta \in (0,2)$  we have
\begin{align}\label{intermediate_convergence_B}
\sup_{t\in [0,T]}\norm{\widehat{B^{n}_t}-\overline{B_t}}_{\Dot{H}^{-1-\theta}}&\lesssim_{\eta,T} \frac{1}{n^{\theta \chi(\alpha,\beta,\gamma)/2}}+\norm{\overline{B_0}-B^n_0}_{\mathbf{H}^{-1-\theta}}.
       \end{align}
In case of \autoref{HP noise 2} we have also for each $\vartheta\in (\theta,\frac{3}{2}]$
\begin{align}\label{intermediate_convergence_B3}
\sup_{t\in [0,T]}\norm{\widehat{B^{n,3}_t}-{\overline{B^3_t}}}_{\Dot{H}^{-\vartheta}}&\lesssim_{\eta,T} \frac{1}{(\vartheta-\theta)n^{\theta/2}}+\norm{\overline{B^3_0}-B^{n,3}_0}_{\Dot{H}^{-\vartheta}}+\frac{\norm{\overline{B_0}-B^n_0}_{\mathbf{H}^{-1-\theta}}}{\vartheta-\theta}.
\end{align}
\end{lemma}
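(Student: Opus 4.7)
The plan is to treat the two inequalities separately. Both follow from the mild formulation combined with the semigroup estimates of \autoref{Properties semigroup} and, crucially, the convergence estimate of \autoref{convergence_operators}. For \eqref{intermediate_convergence_B} I will need a singular Gronwall step because the equation for $\widehat{B^n}$ is self-referential, while for \eqref{intermediate_convergence_B3} the equation is driven only by the horizontal components, so no Gronwall is needed and one just inserts \eqref{intermediate_convergence_B}.

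\textbf{Proof of \eqref{intermediate_convergence_B}.} Subtract the mild forms of $\widehat{B^n_t}$ and $\overline{B_t}$ and decompose the difference as
\begin{align*}
\widehat{B^n_t}-\overline{B_t} &= S^n(t)(B^n_0-\overline{B_0}) + (S^n(t)-S(t))\overline{B_0}\\
&\quad + \int_0^t S^n(t-s)\nabla\times((\mathcal{A}^n_\rho-\mathcal{A}_\rho)\overline{B_s})\,ds + \int_0^t (S^n(t-s)-S(t-s))\nabla\times(\mathcal{A}_\rho\overline{B_s})\,ds\\
&\quad + \int_0^t S^n(t-s)\nabla\times(\mathcal{A}^n_\rho(\widehat{B^n_s}-\overline{B_s}))\,ds.
\end{align*}
The first term is controlled by contractivity. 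For the second and fourth I apply \autoref{convergence_operators} with $\varphi=\theta$, noting from \autoref{form covariance matrix non-isotropic}--\autoref{form covariance matrix isotropic} and the definition of $\chi$ that the coefficient differences between $\eta\Delta+\Lambda^n$ and $\eta\Delta+\Lambda_{\alpha,\beta,\gamma}$ are $O(n^{-\chi(\alpha,\beta,\gamma)})$; the required $\mathbf{L}^2$-regularity of $\overline{B_s}$ comes from the standard energy estimate on \eqref{limit solution A3} which gives $\overline{B}\in L^2(0,T;\mathbf{L}^2)$ with norm bounded by $\|\overline{B_0}\|_{\mathbf{H}^{-1}}$. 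The third term is even better, since $|\mathcal{A}^n_\rho-\mathcal{A}_\rho|=O(n^{-1})\le O(n^{-\chi\theta/2})$. For the last, self-referential term, I apply \autoref{Properties semigroup}(i) with $\varphi=1$ to obtain $\|S^n(t-s)\nabla\times(\mathcal{A}^n_\rho g)\|_{\mathbf{H}^{-1-\theta}}\lesssim (t-s)^{-1/2}\|g\|_{\mathbf{H}^{-1-\theta}}$. Putting everything together I get, with $f(t):=\|\widehat{B^n_t}-\overline{B_t}\|_{\mathbf{H}^{-1-\theta}}$,
\begin{equation*}
f(t)\lesssim n^{-\theta\chi(\alpha,\beta,\gamma)/2} + \|B^n_0-\overline{B_0}\|_{\mathbf{H}^{-1-\theta}} + C\int_0^t (t-s)^{-1/2} f(s)\,ds,
\end{equation*}
and the singular Gronwall inequality (Henry) closes the bound uniformly on $[0,T]$.

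\textbf{Proof of \eqref{intermediate_convergence_B3}.} Starting from \eqref{mild B^n3} and the analogous mild formula for $\overline{B^3_t}$ (via \autoref{equation_for_B_3}), I perform the same decomposition. The contractive and \autoref{convergence_operators}-based terms reproduce, in $\Dot H^{-\vartheta}$, the bounds $\|B^{n,3}_0-\overline{B^3_0}\|_{\Dot H^{-\vartheta}}$ and $n^{-\chi\theta/2}\|\overline{B^3_0}\|_{\Dot H^{-\vartheta+\theta}}\lesssim n^{-\theta/2}$ (here $\chi=1$ in the Perturbed 2D regime). The non-standard term is
\begin{equation*}
\int_0^t S^n(t-s)\operatorname{div}_H\bigl((\widehat{B^{n,H}_s}-\overline{B^H_s})^\perp\bigr)\,ds.
\end{equation*}
I control it by applying \autoref{Properties semigroup}(i) with $\varphi=2+\theta-\vartheta\in(0,2)$ (well-defined precisely because $\vartheta>\theta$), which gives a factor $(t-s)^{-(2+\theta-\vartheta)/2}$ multiplied by $\|\widehat{B^n_s}-\overline{B_s}\|_{\mathbf{H}^{-1-\theta}}$, now already estimated by \eqref{intermediate_convergence_B}. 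Integrating the singular factor produces exactly $\frac{2\,t^{(\vartheta-\theta)/2}}{\vartheta-\theta}$, which is the origin of the $(\vartheta-\theta)^{-1}$ blow-up in the statement. No Gronwall is needed since $\widehat{B^{n,3}}$ does not appear on the right-hand side.

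\textbf{Main obstacle.} The delicate point is the third component: the estimate would fail if $\vartheta=\theta$ because one needs $\varphi<2$ in \autoref{Properties semigroup}(i) to integrate the singularity, yet $\varphi$ must be at least $2+\theta-\vartheta$ in order to absorb the derivative lost by $\operatorname{div}_H$ and still land in the space where \eqref{intermediate_convergence_B} is available. This is the structural reason why the estimate degenerates as $\vartheta\to\theta^+$.
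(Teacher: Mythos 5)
Your proof of \eqref{intermediate_convergence_B} is correct and follows essentially the paper's route: mild formulas, contractivity for the initial data, \autoref{convergence_operators} with $\varphi=\theta$ for the semigroup differences (the coefficient gap being $O(n^{-\chi(\alpha,\beta,\gamma)})$), smoothing with $\varphi=1$ for the self-referential convolution, and a singular Gronwall step. The only cosmetic difference is that you let the intermediate convolution terms act on $\overline{B}$ (using $\overline{B}\in L^2(0,T;\mathbf{L}^2)$ from \autoref{well_posed_limit}) whereas the paper lets them act on $\widehat{B^n}$ (using the uniform energy bound); both work.

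For \eqref{intermediate_convergence_B3}, however, there is a genuine gap. Besides the initial-data terms and the ``non-standard'' term (which you treat exactly as the paper's $J_4$, with the correct singularity $(t-s)^{-(2+\theta-\vartheta)/2}$ producing the $(\vartheta-\theta)^{-1}$), ``the same decomposition'' also produces the terms $(\zeta^n_{H,2}-\zeta_{H,2})\int_0^t S^n(t-s)\operatorname{div}_H\bigl(\overline{B^H_s}^{\perp}\bigr)ds$ (easy, $O(n^{-1})$) and, crucially, $\int_0^t \bigl(S^n(t-s)-S(t-s)\bigr)\operatorname{div}_H\bigl(\overline{B^H_s}^{\perp}\bigr)ds$, which you never estimate. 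This last term cannot be handled by a direct application of \autoref{convergence_operators} as in part one: measuring in $\Dot{H}^{-\vartheta}$ with $\varphi>0$ would require controlling $\norm{\operatorname{div}_H(\overline{B^H_s}^{\perp})}_{\Dot{H}^{-\vartheta+\varphi}}$, i.e. $\norm{\overline{B_s}}_{H^{1-\vartheta+\varphi}}$, and since under the assumptions of \autoref{main Theorem} one only has $\overline{B}\in L^2(0,T;\mathbf{L}^2)$ (the horizontal components are not $H^1$), this forces $\varphi\le\vartheta-1$, which is impossible for $\vartheta\le 1$ (and $\vartheta$ ranges over $(\theta,\tfrac32]$). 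One genuinely needs to combine the $n$-rate with parabolic smoothing, which is what the paper does in its $J_2$ by factoring $S^n(t-s)-S(t-s)$ through $e^{(t-s)\frac{\eta}{2}\Delta}$ via \autoref{properties_semigroup_splitting_extension}, applying \autoref{convergence_operators} to one factor and \autoref{Properties semigroup} to the other (an interpolation between a weak-norm estimate with rate and a strong-norm estimate without rate would also do, but you invoke neither). Until this term is estimated with a rate at least $n^{-\theta/2}$, the bound \eqref{intermediate_convergence_B3} is not established. A minor further slip: the relevant mild formula is the one for $\widehat{B^{n,3}_t}$ following \eqref{intermediate systems B}, not \eqref{mild B^n3}, which is the stochastic equation containing $H^n_t$.
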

\begin{proof}
 First let us observe that by assumptions in case of \autoref{HP noise 1} and \autoref{HP noise isotropo} (resp. \autoref{HP noise 2})
\begin{align*}
B^n_0\rightharpoonup \overline{B_0} \text{ in }\mathbf{H}^{-1}\quad (\text{resp. } B^n_0\rightharpoonup \overline{B_0} \text{ in }\mathbf{H}^{-1},\ B^{n,3}_0\rightharpoonup \overline{B^{3}_0} \text{ in }\Dot L^2(\T^3)) 
\end{align*}
 we have in particular that the family $\left\{\norm{B^n_0}_{\mathbf{H}^{-1}}^2\right\}_{n\in \N}$ (resp. $\left\{\norm{B^n_0}_{\mathbf{H}^{-1}}^2\right\}_{n\in \N},\ \left\{\norm{B^{n,3}_0}^2\right\}_{n\in \N}$) is bounded.
Moreover, $\mathcal{A}^n_{\rho}=\mathcal{A}_{\rho}+O(\frac{1}{n})$, then in particular we have
\begin{align}\label{uniform bound matrix}
    \sup_{n\in\N}\norm{\mathcal{A}^n_{\rho}}_{HS}<+\infty.
\end{align}
Lastly, by simple energy estimates on \eqref{intermediate systems B} we have for each $n\in \N$
\begin{align}\label{uniform bound auxiliary A}
    \sup_{t\in [0,T]}\norm{\widehat{B^{n}_t}}_{\mathbf{H}^{-1}}^2+\int_0^T \norm{\widehat{B^{n}_s}}^2 ds\lesssim_{\eta} \sup_{n\in\N}\norm{\widehat{B^{n}_0}}_{\mathbf{H}^{-1}}^2<+\infty,\\
    \label{uniform_bound_third_component}
    \sup_{t\in [0,T]}\norm{\widehat{B^{n,3}_t}}^2+\int_0^T \norm{\nabla\widehat{B^{n,3}_s}}^2 ds\lesssim_{\eta} \sup_{n\in\N}\norm{\widehat{B^{n}_0}}_{\mathbf{H}^{-1}}^2+\sup_{n\in\N}\norm{\widehat{B^{n,3}_0}}^2<+\infty.
\end{align}
The convergence of $\widehat{B^{n}_t}$ to $\overline{B_t}$ then follows by triangle inequality, \autoref{Properties semigroup}, \autoref{convergence_operators} and the uniform bounds on the initial conditions. Indeed for each $t\in [0,T]$ it holds
\begin{align*}
\norm{\widehat{B^{n}_t}-\overline{B_t}}_{\Dot{H}^{-1-\theta}}& \leq \left(\norm{\left(S^n(t)-S(t)\right)\overline{B_0}}_{\mathbf{H}^{-1-\theta}}+\norm{S^n(t)\left(B^n_0-\overline{B_0}\right)}_{\mathbf{H}^{-1-\theta}}\right)\\& +\norm{\int_0^t \left(S^n(t-s)-S(t-s)\right) \nabla\times\left(\mathcal{A}^n_{\rho}\widehat{B^{n}_s}\right) ds}_{\mathbf{H}^{-1-\theta}}  \\ & + \norm{\int_0^t S(t-s)\nabla\times\left(\left(\mathcal{A}^n_{\rho}-\mathcal{A}_{\rho}\right)\widehat{B^{n}_s}\right) ds}_{\mathbf{H}^{-1-\theta}}\\ & + \norm{\int_0^t S(t-s)\nabla\times\left(\mathcal{A}_{\rho}\left(\widehat{B^{n}_s}-\overline{B_s}\right)\right) ds}_{\mathbf{H}^{-1-\theta}}=I_1+I_2+I_3+I_4. 
\end{align*}
$I_1$ can be treated by \autoref{convergence_operators} :
\begin{align}\label{intermediate B step 1}
I_1\lesssim_{\eta,T} \frac{1}{n^{\theta \chi(\alpha,\beta,\gamma)/2}}+\norm{\overline{B_0}-B^n_0}_{\mathbf{H}^{-1-\theta}}.    
\end{align}
Thanks to \autoref{convergence_operators}, \eqref{uniform bound matrix} and \eqref{uniform bound auxiliary A} it holds
\begin{align}\label{intermediate B step 2}
I_2 & \lesssim \frac{1}{n^{\theta \chi(\alpha,\beta,\gamma)/2}}\int_0^t\lvert t-s\rvert^{\theta/2}\norm{\nabla\times\left(\mathcal{A}^n_{\rho}\widehat{B^{n}_s}\right) ds}_{\mathbf{H}^{-1}} ds\notag\\ & \lesssim \frac{1}{n^{\theta \chi(\alpha,\beta,\gamma)/2}}\int_0^t\lvert t-s\rvert^{\theta/2}\norm{\widehat{B^{n}_s}}  ds \notag\\ & \lesssim_{T} \frac{1}{n^{\theta \chi(\alpha,\beta,\gamma)/2}}.    
\end{align}
Due to \autoref{Properties semigroup} and \eqref{uniform bound auxiliary A} we can treat $I_3$ obtaining easily:
\begin{align}\label{intermediate B step 3}
  I_3&\lesssim_{\eta,\theta} \int_0^t\norm{\nabla\times\left(\left(\mathcal{A}^n_{\rho}-\mathcal{A}_{\rho}\right)\widehat{B^{n}_s}\right)}_{\mathbf{H}^{-1}} ds \notag\\ & \lesssim \norm{\mathcal{A}^n_{\rho}-\mathcal{A}_{\rho}}_{HS}\int_0^T \norm{\widehat{B^{n}_s}} ds \notag\\ & \lesssim_{T}\frac{1}{n}.
\end{align}
Lastly we treat $I_4$ by \autoref{Properties semigroup}:
\begin{align}\label{intermediate B step 5}
 I_4 & \lesssim_{\eta} \int_0^t \frac{1}{(t-s)^{1/2}} \norm{\nabla\times\left(\mathcal{A}_{\rho}\left(\widehat{B^{n}_s}-\overline{B_s}\right)\right)}_{\mathbf{H}^{-2-\theta}}ds\notag\\ & \lesssim \int_0^t \frac{1}{(t-s)^{1/2}} \norm{\widehat{B^{n}_s}-\overline{B_s}}_{\mathbf{H}^{-1-\theta}}ds.
\end{align}
Combining \eqref{intermediate B step 1},\eqref{intermediate B step 2},\eqref{intermediate B step 3},\eqref{intermediate B step 5} we get \eqref{intermediate_convergence_B} by Gronwall's lemma. We move now to $\widehat{B^{n,3}_t}-\overline{B^{n,3}_t}$. First we recall that in case of \autoref{HP noise 2} $\chi(\alpha,\beta,\gamma)\equiv 1$ and we have 
\begin{align*}
\norm{\widehat{B^{n,3}_t}-\overline{B^3_t}}_{\Dot{H}^{-\vartheta}}& \lesssim \left(\norm{\left(S^n(t)-S(t)\right)\overline{B^3_0}}_{\Dot{H}^{-\vartheta}}+\norm{S^n(t)\left(B^{n,3}_0-\overline{B^3_0}\right)}_{\Dot{H}^{-\vartheta}}\right)\\& +\zeta^n_{H,2}\norm{\int_0^t \left(S^n(t-s)-S(t-s)\right) \div_H\left(\widehat{B^{n,H}_s}^{\perp}\right) ds}_{\Dot{H}^{-\vartheta}}  \\ & + \lvert \zeta^n_{H,2}-\zeta_{H,2}\rvert\norm{\int_0^t S(t-s)\div_H\left(\widehat{B^{n,H}_s}^{\perp}\right) ds}_{\Dot{H}^{-\vartheta}}\\ & + \zeta_{H,2}\norm{\int_0^t S(t-s)\div\left(\widehat{B^{n,H}_s}^{\perp}-\overline{B^H_s}^{\perp}\right) ds}_{\Dot{H}^{-\vartheta}}=J_1+J_2+J_3+J_4. 
\end{align*}
$J_1$ can be treated as in the previous case obtaining
\begin{align}\label{estimate_b3}
    J_1\lesssim \frac{1}{n^{\vartheta/2}}+\norm{\overline{B^3_0}-B^{n,3}_0}_{\Dot{H}^{-\vartheta}}.
\end{align}
Thanks to \autoref{properties_semigroup_splitting_extension}, \autoref{convergence_operators}, \autoref{Properties semigroup} and \eqref{uniform bound auxiliary A} it holds
\begin{align}\label{intermediate B3 step 2}
J_2 & \lesssim  \frac{1}{n^{{\vartheta/2}}}\int_0^t\lvert t-s\rvert^{\vartheta/2}\norm{e^{(t-s)\frac{\eta}{2}\Delta}\operatorname{div}_H\left(\widehat{B^{n,H}_s}\right)} ds\notag\\ & \lesssim_{\eta}  \frac{1}{n^{{\vartheta/2}}}\int_0^t \frac{1}{(t-s)^{1-\vartheta}}\norm{\widehat{B^{n}_s}}_{\mathbf{H}^{-1}} ds\notag\\ & \lesssim_{T} \frac{1}{\vartheta n^{{\vartheta/2}}}.    
\end{align}
Due to \autoref{Properties semigroup} we can treat $J_3$ obtaining:
\begin{align}\label{intermediate B3 step 3}
  J_3&\lesssim_{\eta,\vartheta}\frac{1}{n} \int_0^t \frac{1}{(t-s)^{1-\vartheta/2}} \norm{\operatorname{div}_H\left(\widehat{B^{n,H}_s}\right)}_{\Dot{H}^{-2}} ds \notag\\ & \lesssim \frac{1}{\vartheta n}.
\end{align}
Lastly we treat $J_4$ combining \autoref{Properties semigroup} and \eqref{intermediate_convergence_B}:
\begin{align}\label{intermediate B3 step 5}
 J_4 & \lesssim_{\eta} \int_0^t \frac{1}{(t-s)^{1+\theta/2-\vartheta/2}} \norm{\widehat{B^{n}_s}-\overline{B_s}}_{\Dot{H}^{-1-\theta}}ds\notag\\ & \lesssim_{T} \frac{1}{\vartheta-\theta}\sup_{t\in [0,T]}  \norm{\widehat{B^{n}_t}-\overline{B_t}}_{\Dot{H}^{-1-\theta}}\notag\\ & \lesssim_{\eta,T} \frac{ n^{-\theta/2}+\norm{\overline{B_0}-B^n_0}_{\mathbf{H}^{-1-\theta}}}{\vartheta-\theta} .
\end{align}
Combining \eqref{estimate_b3},\eqref{intermediate B3 step 2},\eqref{intermediate B3 step 3},\eqref{intermediate B3 step 5} we obtain relation \eqref{intermediate_convergence_B3}.
\end{proof}
Secondly we provide a quantitative result on the closeness of 
$B^n_t$ (resp. $B^n_t,B^{n,3}_t$) 
and $\widehat{B^{n}_t}$ (resp. $\widehat{B^{n}_t},\widehat{B^{n,3}_t}$). 
\begin{lemma}\label{preliminary convergence 2}
Under the same assumptions of \autoref{main Theorem}, for each  $\kappa \in [1,2),\ \theta\in (0,2),$ and $\delta\in (0,\theta)$ we have
\begin{align}\label{intermediate estimate 2 A}
\sup_{t\in [0,T]}\expt{\norm{\widehat{B^{n}_t}-B^n_t}^{\kappa}_{\mathbf{H}^{-1-\theta}}}&\lesssim  \frac{1}{n^{\frac{\kappa(\alpha\wedge\beta\wedge\gamma)(\theta-\delta)}{6+\delta}}}\norm{B^n_0}_{\mathbf{H}^{-1}}^\kappa
\end{align}
In case of \autoref{HP noise 2} we have also for each $\vartheta\in (\theta,\frac{3}{2}]$
\begin{align}
\sup_{t\in [0,T]}\expt{\norm{\widehat{B^{n,3}_t}-B^{n,3}_t}^\kappa_{\Dot{H}^{-\vartheta}}}&\lesssim  \frac{\norm{B^{n,3}_0}^\kappa}{n^{\frac{2\kappa(\vartheta-\delta)}{3}}}+ \frac{\norm{B^n_0}_{\mathbf{H}^{-1}}^\kappa }{n^{\frac{\kappa(\vartheta-\theta)(\theta-\delta)}{6+\delta}}}   . 
\end{align}
\end{lemma}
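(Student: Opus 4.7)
The key observation is that both $B^n_t$ and $\widehat{B^{n}_t}$ share the same semigroup $S^n$, the same initial datum, and the same drift operator $\nabla\times(\mathcal{A}^n_\rho\,\cdot)$; they differ only in that $B^n_t$ carries the stochastic convolution $Z^n_t$. Subtracting the mild formula \eqref{mild B^n} from the analogous mild equation satisfied by $\widehat{B^{n}_t}$, the difference $D^n_t:=B^n_t-\widehat{B^{n}_t}$ satisfies the closed integral equation
\begin{align*}
D^n_t = \int_0^t S^n(t-s)\nabla\times(\mathcal{A}^n_\rho D^n_s)\,ds + Z^n_t.
\end{align*}
Taking the $\mathbf{H}^{-1-\theta}$ norm and invoking the smoothing of $S^n$ from \autoref{Properties semigroup} (uniform in $n$, since $\nu_n\ge\eta$) together with the elementary bound $\sup_n\|\mathcal{A}^n_\rho\|_{HS}<\infty$, one obtains $\|D^n_t\|_{\mathbf{H}^{-1-\theta}}\le \|Z^n_t\|_{\mathbf{H}^{-1-\theta}}+C\int_0^t(t-s)^{-1/2}\|D^n_s\|_{\mathbf{H}^{-1-\theta}}\,ds$. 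Passing to $L^\kappa(\Omega)$ via Minkowski's integral inequality and applying a singular-kernel Gronwall lemma closes the estimate against $\sup_t\expt{\|Z^n_t\|_{\mathbf{H}^{-1-\theta}}^\kappa}$. Since $\kappa<2$, Jensen gives $\expt{\|Z^n_t\|^\kappa}^{1/\kappa}\le \expt{\|Z^n_t\|^2}^{1/2}$, and \autoref{cor_stoch_conv} furnishes the announced rate.

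\textbf{Third component.} Subtracting the mild formula \eqref{mild B^n3} for $B^{n,3}_t$ and its analogue for $\widehat{B^{n,3}_t}$ yields
\begin{align*}
E^n_t := B^{n,3}_t-\widehat{B^{n,3}_t} = -\frac{\rho\zeta^n_{H,2}}{C_{1,H}C_{2,H}}\int_0^t S^n(t-s)\operatorname{div}_H\bigl((D^{n,H}_s)^\perp\bigr)\,ds + H^n_t.
\end{align*}
Crucially this equation is \emph{not} self-referential in $E^n$: the integrand involves only $D^n$, already controlled in the first part, so no Gronwall is required here. Taking the $\Dot{H}^{-\vartheta}$-norm, the stochastic piece $H^n_t$ is handled directly via \autoref{cor_stoch_conv} plus Jensen, producing the first bound on the right-hand side. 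For the deterministic integral, the combined use of semigroup smoothing and the one-derivative loss from $\operatorname{div}_H$ gives $\|S^n(t-s)\operatorname{div}_H((\cdot)^\perp)\|_{\Dot{H}^{-\vartheta}}\lesssim (t-s)^{-(2+\theta-\vartheta)/2}\|D^n_s\|_{\mathbf{H}^{-1-\theta}}$; the kernel is integrable precisely because $\vartheta>\theta$. Combining this with the first-part bound on $D^n_s$ and balancing, via interpolation between the $\mathbf{H}^{-1-\theta}$-decay and the integrability threshold $\theta'<\vartheta$, reproduces the second term of the stated estimate.

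\textbf{Main obstacle.} The only genuinely non-routine step is the singular-kernel Gronwall argument in $L^\kappa(\Omega)$, which requires first commuting the expectation inside the time integral via Minkowski and then iterating the inequality; the relevant abstract lemma may be invoked from standard references. Everything else—mild difference equations, uniform semigroup bounds, a priori control of the stochastic convolutions supplied by \autoref{cor_stoch_conv}, and Jensen's inequality for $\kappa<2$—is already in place.
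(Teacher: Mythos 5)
Your treatment of \eqref{intermediate estimate 2 A} is essentially the paper's own proof: the same mild-difference equation $d^n_t=B^n_t-\widehat{B^n_t}=\int_0^t S^n(t-s)\nabla\times(\mathcal{A}^n_\rho d^n_s)\,ds+Z^n_t$, a singular-kernel Gronwall argument in $L^\kappa(\Omega)$ (the paper raises to the power $\kappa$ and uses Jensen in time with the kernel $(t-s)^{-\kappa/2}$ rather than Minkowski's integral inequality, an immaterial variant), and then Jensen in $\omega$ together with \autoref{cor_stoch_conv}. For the third component the stochastic convolution $H^n_t$ is handled identically, but your estimate of the deterministic convolution genuinely differs from the paper's: you apply the smoothing bound of \autoref{Properties semigroup} directly, $\norm{S^n(t-s)\operatorname{div}_H((d^{n,H}_s)^{\perp})}_{\Dot{H}^{-\vartheta}}\lesssim (t-s)^{-(2+\theta-\vartheta)/2}\norm{d^n_s}_{\mathbf{H}^{-1-\theta}}$, and integrate against the first-part bound, which yields the rate $n^{-\frac{2\kappa(\theta-\delta)}{6+\delta}}$ (recall $\alpha\wedge\beta\wedge\gamma=2$ under \autoref{HP noise 2}) at the price of a prefactor of order $(\vartheta-\theta)^{-\kappa}$ coming from $\int_0^t(t-s)^{-(2+\theta-\vartheta)/2}ds\simeq t^{(\vartheta-\theta)/2}/(\vartheta-\theta)$. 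The paper instead interpolates the convolution between $\Dot{H}^{-2-\theta}$ (no smoothing, bounded by $\int_0^t\norm{d^n_s}_{\mathbf{H}^{-1-\theta}}ds$) and $\Dot{H}^{-\theta}$ (full smoothing, bounded via the uniform $\mathbf{H}^{-1}$ bounds of \autoref{prop:compact_space} and \eqref{uniform bound auxiliary A}), then uses H\"older in $\Omega$; this produces the exponent $\kappa(\vartheta-\theta)(\theta-\delta)/(6+\delta)$ of the statement but with a constant that does not degenerate as $\vartheta\downarrow\theta$, which is what permits the explicit $(\vartheta-\theta)$-tracking in \eqref{main thm ineq 2}. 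Since $\vartheta-\theta<2$, your rate is actually stronger in $n$ for each fixed $\vartheta>\theta$ and implies the stated inequality with a $\vartheta,\theta$-dependent constant, so your final, somewhat vague ``balancing via interpolation'' step is not needed for the lemma as stated; it (or the paper's interpolation) only matters if one insists on a constant uniform as $\vartheta\to\theta$.
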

\begin{proof}
From \autoref{mild form}, we already know that $B^n_t$ and $B^{n,3}_t$ can be rewritten in mild form as 
\begin{align*}
   B^n_t&=S^n(t)B^n_0+\int_0^t S^n(t-s)\nabla \times (\mathcal{A}^n_{\rho}B^n_s)ds+ Z^{n}_t,\\
   B^{n,3}_t&=S^n(t)B^{n,3}_0-\frac{\rho \zeta^n_{H,2}} {C_{1,H}C_{2,H}}\int_0^t S^{n}(t-s)\operatorname{div}_H\left((B^{n,H}_s)^{\perp}\right) ds+H^{n}_t.
\end{align*}
Therefore, in case of \autoref{HP noise 1} and \autoref{HP noise isotropo} we introduce  
\begin{align*}
   d^{n}_t=B^n_t-\widehat{B^n_t},
\end{align*}
while in case of \autoref{HP noise 2} we introduce also 
\begin{align*}
     D^{n}_t=B^{n,3}_t-\widehat{B^{n,3}_t}.
\end{align*}
we have
\begin{align*}
    d^n_t=\int_0^t S^n(t-s)\nabla \times (\mathcal{A}^n_{\rho}d^n_s)ds+Z^n_t,\quad 
    D^{n}_t=H^n_t-\frac{\rho \zeta^n_{H,2}} {C_{1,H}C_{2,H}}\int_0^t S^{n}(t-s)\operatorname{div}_H\left((d^{n,H}_s)^{\perp}\right) ds.
\end{align*}    
Due to \autoref{cor_stoch_conv} and Gr\"onwall's inequality we can obtain \eqref{intermediate estimate 2 A}.
Indeed,
\begin{align*}
    \expt{\norm{d^n_t}_{\mathbf{H}^{-1-\theta}}^\kappa}&\lesssim_{\eta}\expt{\left(\int_0^t \frac{1}{\sqrt{t-s}}\norm{d^n_s}_{\mathbf{H}^{-1-\theta}}ds\right)^\kappa}+\expt{\norm{Z^n_t}_{\mathbf{H}^{-1-\theta}}^2}^{\frac{\kappa}{2}}\\ & \lesssim_T \int_0^t \frac{1}{(t-s)^{\kappa/2}}\expt{\norm{d^n_s}^{\kappa}_{\mathbf{H}^{-1-\theta}}}ds+\expt{\norm{Z^n_t}_{\mathbf{H}^{-1-\theta}}^2}^{\frac{\kappa}{2}}.
\end{align*}
This relation easily implies \eqref{intermediate estimate 2 A}.
Now we move to the analysis of $D^{n}_t$. $H^{n}_t$ can be treated easily by \autoref{cor_stoch_conv} obtaining 
\begin{align}\label{intermediate D step 1}
\sup_{t\in [0,T]}\expt{\norm{H^{n}_t}^\kappa_{\Dot{H}^{-\vartheta}}}&\lesssim   \frac{\norm{B^{n,3}_0}^\kappa+\norm{B^n_0}_{\mathbf{H}^{-1}}^\kappa}{ n^{\frac{2\kappa(\vartheta-\delta)}{3}}}.    
\end{align}
In order to treat the deterministic convolution, we first argue by interpolation obtaining thanks to \autoref{Properties semigroup} \begin{align}\label{intermediate D step 2}
 &\norm{\int_0^t S^{n}(t-s)\operatorname{div}_H\left((d^{n,H}_s)^{\perp}\right) ds}^\kappa_{\Dot{H}^{-\vartheta}}\notag\\ &\leq \norm{\int_0^t S^{n}(t-s)\operatorname{div}_H\left((d^{n,H}_s)^{\perp}\right) ds}^{\frac{\kappa(\vartheta-\theta)}{2}}_{\Dot{H}^{-2-\theta}}   \norm{\int_0^t S^{n}(t-s)\operatorname{div}_H\left((d^{n,H}_s)^{\perp}\right) ds}^{\frac{\kappa(2-\vartheta+\theta)}{2}}_{\Dot{H}^{-\theta}}\notag\\ &   \lesssim_{\eta} \left(\int_0^t \norm{d^{n}_s}_{\mathbf{H}^{-1-\theta}}ds \right)^{\frac{\kappa(\vartheta-\theta)}{2}}\left(\int_0^t \frac{1}{(t-s)^{1-\theta/2}} \norm{\widehat{B^{n}_s}-B^n_{s}}_{\mathbf{H}^{-1}} ds\right)^{\frac{\kappa(2-\vartheta+\theta)}{2}}\notag\\ & \lesssim_{T} \frac{1}{\theta^{\frac{\kappa(2-\vartheta+\theta)}{2}}}\left(\int_0^t \norm{d^{n}_s}_{\mathbf{H}^{-1-\theta}}^\kappa ds \right)^{\frac{\vartheta-\theta}{2}}\sup_{t\in [0,T]}\left(\norm{B^n_t}_{\mathbf{H}^{-1}}+\norm{\widehat{B^{n}_t}}_{\mathbf{H}^{-1}}\right)^{\frac{\kappa(2-\vartheta+\theta)}{2}}.
\end{align}
Combining \eqref{intermediate D step 2}, \eqref{intermediate estimate 2 A}, \eqref{prop:compact_space}, \eqref{uniform bound auxiliary A} we obtain an estimate of the deterministic convolution. Indeed, by H\"older inequality, for each $t\in [0,T]$ it holds:
\begin{align}\label{intermediate D step 3}
&\left(\frac{\lvert\rho\rvert \zeta^n_{H,2}} {C_{1,H}C_{2,H}}\right)^{\kappa}\expt{\norm{\int_0^t S^{n}(t-s)\operatorname{div}_H\left((d^{n,H}_s)^{\perp}\right) ds}^\kappa_{\Dot{H}^{-\vartheta}}} \notag\\ & \lesssim_{\eta,T,\theta}\expt{\left(\int_0^t \norm{d^{n}_s}_{\mathbf{H}^{-1-\theta}}^\kappa ds \right)^{\frac{\vartheta-\theta}{2}}\sup_{t\in [0,T]}\left(\norm{B^n_t}_{\mathbf{H}^{-1}}+\norm{\widehat{B^{n}_t}}_{\mathbf{H}^{-1}}\right)^{\frac{\kappa(2-\vartheta+\theta)}{2}}}\notag\\ & \leq  \expt{\int_0^T \norm{d^{n}_s}_{\mathbf{H}^{-1-\theta}}^\kappa ds }^{\frac{\vartheta-\theta}{2}}\expt{\sup_{t\in [0,T]}\left(\norm{B^n_t}_{\mathbf{H}^{-1}}^\kappa+\norm{\widehat{B^{n}_t}}_{\mathbf{H}^{-1}}^\kappa\right)}^{\frac{2-\vartheta+\theta}{2}}\notag \\ & \lesssim_{\delta}   \frac{1}{n^{\frac{\kappa(\vartheta-\theta)(\theta-\delta)}{6+\delta}}}  \norm{B^n_0}_{\mathbf{H}^{-1}}^\kappa.
\end{align}
Combining \eqref{intermediate D step 1} and \eqref{intermediate D step 3} the result follows.
\end{proof}

The proof of \autoref{main Theorem} then follows combining \autoref{preliminary convergence} and \autoref{preliminary convergence 2}.

\begin{acknowledgements}
The authors thank Professor Franco Flandoli and Professor Francesco Pegoraro for useful discussions and valuable insight into the subject.   
\end{acknowledgements}
\appendix  
\section{Some useful lemmas}
\subsection{Proof of \autoref{convergence_operators}}\label{app_convergence_operator}
    Thanks to the Fourier expansion formula \eqref{fourier_description_op} we have
    \begin{align*}
    \lVert \left( S^{\mu_1,\nu_1}(t)-S^{\mu_2,\nu_2}(t)\right)q\rVert^2_{\Dot{H}^{s-\varphi}}&=\sum_{k\in \Z^3_0}\left(e^{-\left(\mu_1(k_1^2+k_2^2)+\nu_1 k_3^2\right)t}-e^{-\left(\mu_2(k_1^2+k_2^2)+\nu_2 k_3^2\right)t}\right)^2\langle q,\frac{e^{-ik\cdot x}}{(2\pi)^{3/2}}\rangle^2 \frac{\lvert k\rvert^{2s}}{\lvert k\rvert^{2\varphi}}\\ & \leq \norm{q}_{\Dot{H}^{s}}^2\sup_{k\in \Z^3_0}\frac{\left(e^{-\left(\mu_1(k_1^2+k_2^2)+\nu_1 k_3^2\right)t}-e^{-\left(\mu_2(k_1^2+k_2^2)+\nu_2 k_3^2\right)t}\right)^2}{\lvert k\rvert^{2\varphi}}.   
    \end{align*}
    Therefore, the proof is complete if we are able to show that 
    \begin{align*}
    \sup_{k\in \Z^3_0}\frac{\left(e^{-\left(\mu_1(k_1^2+k_2^2)+\nu_1 k_3^2\right)t}-e^{-\left(\mu_2(k_1^2+k_2^2)+\nu_2 k_3^2\right)t}\right)^2}{\lvert k\rvert^{2\varphi}}\lesssim_{\varphi}  \left(\lvert \mu_1-\mu_2\rvert+\lvert \nu_1-\nu_2\rvert\right)^{\varphi}t^{\varphi}. 
    \end{align*}
    This is indeed the case. If $\mu_1<\mu_2$ and $\nu_1<\nu_2$
    \begin{align*}
        \frac{\left(e^{-\left(\mu_1(k_1^2+k_2^2)+\nu_1 k_3^2\right)t}-e^{-\left(\mu_2(k_1^2+k_2^2)+\nu_2 k_3^2\right)t}\right)^2}{\lvert k\rvert^{2\varphi}}&\leq \frac{\left(1-e^{-\left((\mu_2-\mu_1)(k_1^2+k_2^2)+(\nu_2-\nu_1) k_3^2\right)t}\right)^2}{\lvert k\rvert^{2\varphi}}\\ & \leq t^{\varphi}(\lvert \mu_2-\mu_1\rvert\wedge \lvert \nu_2-\nu_1\rvert )^{\varphi}\frac{\left(1-e^{-\left((\mu_2-\mu_1)(k_1^2+k_2^2)+(\nu_2-\nu_1) k_3^2\right)t}\right)^2}{t^{\varphi}\left( \lvert \mu_1-\mu_2\rvert (k_1^2+k_2^2)+\lvert \nu_1-\nu_2\rvert k_3^2\right)^{\varphi}}\\ & \lesssim t^{\varphi}(\lvert \mu_2-\mu_1\rvert\wedge \lvert \nu_2-\nu_1\rvert )^{\varphi}
    \end{align*}
     since $\frac{1-e^{-x}}{x^{\varphi/2}}$ is bounded for $x> 0$, uniformly in $\varphi\in [0,2]$. In case of $\mu_1<\mu_2$ and $\nu_2<\nu_1$ and either $(k_1,k_2)\neq 0$ and $k_3\neq 0$ 
         \begin{align*}
        \frac{\left(e^{-\left(\mu_1(k_1^2+k_2^2)+\nu_1 k_3^2\right)t}-e^{-\left(\mu_2(k_1^2+k_2^2)+\nu_2 k_3^2\right)t}\right)^2}{\lvert k\rvert^{2\varphi}}&\lesssim \frac{\left(1-e^{-\left((\mu_2-\mu_1)(k_1^2+k_2^2)\right)t}\right)^2}{ \left(k_1^2+k_2^2\right)^{\varphi}}+\frac{\left(1-e^{-\left((\nu_1-\nu_2)k_3^2\right)t}\right)^2}{ \lvert k_3\rvert^{2\varphi}}\\ & \leq t^{\varphi}\lvert \mu_2-\mu_1\rvert^{\varphi}\frac{\left(1-e^{-(\mu_2-\mu_1)(k_1^2+k_2^2)t}\right)^2}{\left( t\lvert \mu_1-\mu_2\rvert (k_1^2+k_2^2)\right)^{\varphi}}\\ & +t^{\varphi}\lvert \nu_2-\nu_1\rvert^{\varphi}\frac{\left(1-e^{-(\nu_1-\nu_2)k_3^2t}\right)^2}{\left( t\lvert \nu_1-\nu_2\rvert k_3^2\right)^{\varphi}}\\ & \lesssim t^{\varphi}\left(\lvert \mu_2-\mu_1\rvert^{\varphi}+\lvert \nu_2-\nu_1\rvert^{\varphi}\right).
    \end{align*}
    The case $(k_1,k_2)=0$ or $k_3=0$ easily reconducts to the one treated. This completes the proof.
\subsection{Proof of \autoref{lemma ito strat corrector}}\label{app_ito_strat}
    All the terms involved are Lie brackets, thus we begin with some preliminary computations. By recalling that we have $\nabla\skj = i \tkj k\otimes a_{k, j}e^{ikx}$, and also that $a_{-k, j}=a_{k,j}$ for all $k \in \Z_0^3, \  j=1,2$ we write 
\begin{align} \label{TT}
    \skl\cdot \nabla(\smkj \cdot \nabla F) &= (\skl\cdot \nabla \smkj)\cdot \nabla F + \skl\otimes \smkj : \nabla^2F  \\
    &= -i\tkl(a_{k,l}\cdot k)a_{-k,l}\cdot \nabla F + \skl\otimes \smkj : \nabla^2F \notag \\
    &= \skl\otimes \smkj : \nabla^2F \notag
\end{align}
since $a_{k,l}\cdot k= 0$.
On the other hand we have 
\begin{align} \label{SS}
    (F\cdot\nabla\smkj)\cdot\nabla\skl = \tmkj\tkl (F \cdot k)(a_{k,j}\cdot k)a_{k,l} = 0
\end{align}
while 
\begin{align} \label{TS}
    \smkj\cdot\nabla\left(F\cdot\nabla\skl\right)&= 
   i\tkl\tmkj e^{-ikx} a_{k,j}^r \partial_r ((F\cdot k) a_{k, l} e^{ikx})\\ &= i\tkl\tmkj e^{-ikx} a_{k,j}^r((\partial_r F\cdot k) a_{k, l} e^{ikx} + (F\cdot k) a_{k, l} k_r e^{ikx}) \notag\\ &= i\tkl\tmkj a_{k,j}^r(\partial_r F\cdot k) a_{k, l} \notag
\end{align}
and 
\begin{align} \label{ST}
    \left(\skl\cdot\nabla F\right)\cdot\nabla\smkj &=-i\tkl\tmkj a^r_{k,l}(\partial_r F\cdot k ) a_{k,j}.
\end{align}
In particular we see that if $l=j$ the sum of \eqref{TS} and \eqref{ST} is zero.  \\
We are now ready to compute the correctors. We have
\begin{align*}
     \mathcal{L}_{\skj}\mathcal{L}_{\smkj} F&=  \skj\cdot \grad(\smkj\cdot \nabla F) -\skj\cdot \grad(F\cdot \nabla \smkj) - (\smkj\cdot \nabla F)\cdot\nabla \skj   +(F\cdot \nabla \smkj)\cdot\nabla\skj.  
\end{align*}
By \eqref{SS} the last term is zero, while by \eqref{TS} and \eqref{ST}, the sum of the two middle terms is zero. Summing over $k\in \Z_0^3$ and $j=1,2$ we get 
\begin{align}\label{TTcov}
    \sumkj  \mathcal{L}_{\skj}\mathcal{L}_{\smkj} F &= \sumkj  \skl\otimes \smkj : \nabla^2F \\
    &= \frac{1}{2}\div(Q_0^n(0)\grad F) \notag \\
    &= \Lambda^n F \notag
\end{align}
where we used again the fact that $\skj\cdot \grad \smkj = 0$ in the second line and \autoref{form covariance matrix non-isotropic}, \autoref{form covariance matrix isotropic} in the last equality. \\
We now look at the part of the corrector relative to the correlation $\rho$, that is 
\begin{align}
   \rho \sumkmeancov \left(\mathcal{L}_{\sk{1}}\mathcal{L}_{\smk{2}}+\mathcal{L}_{\sk{2}}\mathcal{L}_{\smk{1}}\right)
\end{align}
We write the general form of the two summands, for $l\neq j$, as
\begin{align*}
     \mathcal{L}_{\skl}\mathcal{L}_{\smkj} F&=  \skl\cdot \grad(\smkj\cdot \nabla F) -\skl\cdot \grad(F\cdot \nabla \smkj) - (\smkj\cdot \nabla F)\cdot\nabla \skl   +(F\cdot \nabla \smkj)\cdot\nabla\skl.  
\end{align*}
Again, by \eqref{SS} the last term is always zero. Now summing over $k$ and $l\neq j$ we see, using first \eqref{TT} and reasoning as in the manipulation of \eqref{TTcov}
\begin{align}
    \sumkmeancov\sigma^n_{k,1}\cdot \grad(\sigma^n_{-k,2}\cdot \nabla F) +  \sigma^n_{k,2}\cdot \grad(\sigma^n_{-k,1}\cdot \nabla F) &= \div(\overline{Q_\rho^n}(0)\nabla F)= 0.
\end{align}
We are left with the contributions of the middle terms which no longer sum to zero as in the first corrector. Recalling our choice of the coefficients \eqref{definition thetakj}, we see that if $l\neq j$ and $n\leq \lvert k\rvert \leq 2n,\ k_3=0$
\begin{equation*}
    \tkj\tmkl  = - \tkl \tmkj.
\end{equation*}
Thus, when we sum over $l\neq j$, we get from \eqref{TS} and \eqref{ST} that 
\begin{align}
    -\sigma^n_{k, 1}\cdot \grad(F\cdot \nabla \sigma^n_{-k, 2}) - (\sigma^n_{-k, 1}\cdot \nabla F)\cdot\nabla \sigma^n_{k, 2} & \\
    &= 2 i\theta_{k, 1}\theta_{-k, 2} a^r_{k,1}(\partial_r F\cdot k ) a_{k,2} \notag \\
    &= -\frac{2\sgn(k)}{|k|^{\alpha/2 + \beta/2}} a^r_{k,1}(\partial_r F\cdot k ) a_{k,2} \notag
\end{align}
and similarly
\begin{align}
     - (\sigma^n_{-k, 2}\cdot \nabla F)\cdot\nabla \sigma^n_{k, 1} -\sigma^n_{k, 2}\cdot \grad(F\cdot \nabla \sigma^n_{-k, 1}) & \\
    &=- 2 i\theta^n_{-k,2}\theta^n_{k, 1} a^r_{k,2}(\partial_r F\cdot k ) a_{k,1} \notag \\
    &= \frac{2\sgn(k)}{|k|^{\alpha/2 + \beta/2}}a^r_{k,2}(\partial_r F\cdot k ) a_{k,1} \notag 
\end{align}
where we indicated with $\sgn(k)$ the function that takes value $1$ if $k\in \Gamma^+$ and $-1$ if $k\in \Gamma^-$. Summing the last two expressions and then summing over $k\in \Z_0^3, \ k_3=0$ it is straightforward to see thanks to \autoref{convergence properties matrix} that 
\begin{align*}
   \rho \sumkmeancov \left(\mathcal{L}_{\sk{1}}\mathcal{L}_{\smk{2}}+\mathcal{L}_{\sk{2}}\mathcal{L}_{\smk{1}}\right) &= 
   -\sum_{l\in \{1,2\}} \partial_l \overline{Q^{n}_{\rho}}(0)\cdot\nabla F_l \\
   &= \Lambda^n_\rho F.
\end{align*}

\bibliography{main}{}
\bibliographystyle{plain}

\end{document}